\newcommand{\nc}{\newcommand}
\newenvironment{proof}{{\noindent \textbf{Proof}\,\,}}{\hspace*{\fill}$\Box$\medskip}
\newtheorem{theorem}[subsection]{Theorem}
\newtheorem{proposition}[subsection]{Proposition}
\newtheorem{lemma}[subsection]{Lemma}
\theoremstyle{definition}
\theoremstyle{remark}
\newtheorem{remark}[subsection]{Remark}
\nc{\fa}{{\mathfrak{a}}}
\nc{\fb}{{\mathfrak{b}}}
\nc{\fg}{{\mathfrak{g}}}
\nc{\fh}{{\mathfrak{h}}}
\nc{\fj}{{\mathfrak{j}}}
\nc{\fn}{{\mathfrak{n}}}
\nc{\fm}{{\mathfrak{m}}}
\nc{\fu}{{\mathfrak{u}}}
\nc{\fp}{{\mathfrak{p}}}
\nc{\fr}{{\mathfrak{r}}}
\nc{\ft}{{\mathfrak{t}}}
\nc{\fsl}{{\mathfrak{sl}}}
\nc{\fgl}{{\mathfrak{gl}}}
\nc{\hsl}{{\widehat{\mathfrak{sl}}}}
\nc{\hgl}{{\widehat{\mathfrak{gl}}}}
\nc{\hg}{{\widehat{\mathfrak{g}}}}
\nc{\chg}{{\widehat{\mathfrak{g}}}{}^\vee}
\nc{\hn}{{\widehat{\mathfrak{n}}}}
\nc{\chn}{{\widehat{\mathfrak{n}}}{}^\vee}
\nc{\Mod}{{\textrm{Mod}}}
\nc{\wGL}{{\widehat{GL}^+}}
\nc{\BA}{{\mathbb{A}}}
\nc{\BC}{{\mathbb{C}}}
\nc{\BM}{{\mathbb{M}}}
\nc{\BN}{{\mathbb{N}}}
\nc{\BF}{{\mathbb{F}}}
\nc{\BP}{{\mathbb{P}}}
\nc{\BR}{{\mathbb{R}}}
\nc{\BZ}{{\mathbb{Z}}}
\nc{\kk}{{\mathbb{K}}}
\nc{\CA}{{\mathcal{A}}}
\nc{\CB}{{\mathcal{B}}}
\nc{\CC}{{\mathcal{C}}}
\nc{\DD}{{\mathcal{D}}}
\nc{\CE}{{\mathcal{E}}}
\nc{\CF}{{\mathcal{F}}}
\nc{\tCF}{{\widetilde{\CF}}}
\nc{\oCF}{{\overline{\CF}}}
\nc{\CG}{{\mathcal{G}}}
\nc{\CL}{{\mathcal{L}}}
\nc{\CK}{{\mathcal{K}}}
\nc{\CI}{{\mathcal{I}}}
\nc{\CM}{{\mathcal{M}}}
\nc{\CH}{{\mathcal{H}}}
\nc{\CN}{{\mathcal{N}}}
\nc{\CO}{{\mathcal{O}}}
\nc{\CP}{{\mathcal{P}}}
\nc{\CR}{{\mathcal{R}}}
\nc{\CQ}{{\mathcal{Q}}}
\nc{\CS}{{\mathcal{S}}}
\nc{\CT}{{\mathcal{T}}}
\nc{\CU}{{\mathcal{U}}}
\nc{\CV}{{\mathcal{V}}}
\nc{\CW}{{\mathcal{W}}}
\nc{\CX}{{\mathcal{X}}}
\nc{\tCX}{{\widetilde{\mathcal{X}}}}
\nc{\CY}{{\mathcal{Y}}}
\nc{\tCY}{{\widetilde{\mathcal{Y}}}}
\nc{\tN}{{\widetilde{\CN}}}
\nc{\pN}{{\BP\widetilde{\CN}}}
\nc{\tT}{{T}}
\nc{\fC}{{\mathfrak{C}}}
\nc{\fZ}{{\mathfrak{Z}}}
\nc{\fU}{{\mathfrak{U}}}
\nc{\fS}{{\mathfrak{S}}}
\nc{\od}{{\overline{d}}}
\nc{\rg}{{\textrm{R}\Gamma}}
\nc{\erg}{{\emph{R}\Gamma}}
\nc{\id}{{\textrm{Id}}}
\nc{\rhom}{{\textrm{RHom}}}
\def\ph{\varphi}
\def\e{\varepsilon}
\def\and{\textrm{ }\&\textrm{ }}
\def\sym{\textrm{Sym}}
\def\tCF{\widetilde{\CF}}
\def\oCA{\tilde{\CA}}
\def\slz{SL_2(\BZ)}
\begin{document}

\title[The Shuffle Algebra Revisited]{The Shuffle Algebra Revisited}

\author[Andrei Negut]{Andrei Negut}
\address{Columbia University, Department of Mathematics, New York, NY, USA}
\address{Simion Stoilow Institute of Mathematics, Bucharest, Romania}
\email{andrei.negut@@gmail.com}

\maketitle

\begin{abstract}

In this paper we introduce certain new features of the shuffle algebra of \cite{F} that will allow us to obtain explicit formulas for the isomorphism between its Drinfeld double and the elliptic Hall algebra of \cite{BS}, \cite{SV}. These results are necessary for our work in \cite{GN} and \cite{Ne}, where they will be applied to the study of the Hilbert scheme and to computing knot invariants. \\

\end{abstract}

\section{Introduction}

The shuffle algebra $\CA^+$, first introduced by Feigin and Odesskii, is a subset of symmetric rational functions over the field $\BC(q_1,q_2)$, endowed with the shuffle product of \eqref{eqn:mult}. The elliptic Hall algebra $\CE$ was introduced by Burban and Schiffmann in \cite{BS} as the Hall algebra of the category of coherent sheaves on an elliptic curve. In \cite{SV}, Schiffmann and Vasserot have constructed an isomorphism $\Upsilon$ between the positive half of the elliptic Hall algebra and the shuffle algebra $\CA^+$. This isomorphism is given by generators and relations, and it extends to the Drinfeld doubles of the algebras in question. Our goal in this paper is to make this isomorphism $\Upsilon$ more explicit, by proving: \\

\begin{theorem} 
\label{thm:main}

We have:
\begin{equation}
\label{eqn:iso}
\Upsilon(u_{k,d}) = P_{k,d}
\end{equation}
where the $u_{k,d}$ are the standard generators of $\CE$ (see Subsection \ref{sub:elliptic} for the definition) and the $P_{k,d}$ are the minimal shuffle elements of Remark \eqref{rem:minimal}. As symmetric rational functions, they are represented by:
\begin{equation}
\label{eqn:70}
P_{k,d} = \frac {(q_1-1)^k(1-q_2)^k}{(q_1^{n}-1)(1-q_2^n)} \cdot
\end{equation}
$$
\emph{Sym} \left[ \frac {\prod_{i=1}^k z_i^{\left \lfloor \frac {id}k \right \rfloor - \left \lfloor \frac {(i-1)d}k \right \rfloor}\sum_{x=0}^{n-1} (q_1q_2)^{x} \frac {z_{a(n-1)+1}...z_{a(n-x)+1}}{{z_{a(n-1)}...z_{a(n-x)}}}}{\left(1 - q_1q_2 \frac {z_2}{z_1}\right)...\left(1 - q_1q_2 \frac {z_{k}}{z_{k-1}}\right)} \prod_{1\leq i<j \leq k} \omega \left( \frac {z_i}{z_j} \right) \right]
$$
where $n = \gcd(k,d)$, $a=\frac kn$, and $\omega(x) = \frac {(x-1)(x-q_1q_2)}{(x-q_1)(x-q_2)}$. \\

\end{theorem}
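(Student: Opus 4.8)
\medskip\noindent\textbf{Proof strategy.}\ The plan is to take the right-hand side of \eqref{eqn:70} as the definition of a symmetric rational function $P_{k,d}$ and to argue that it must coincide with $\Upsilon(u_{k,d})$. First I would check directly that $P_{k,d}$ is a legitimate element of $\CA^+$: the apparent poles along $z_i=q_1 z_j$ and $z_i=q_2 z_j$ (coming from the factors $\omega$) and the order-one pole along $z_i=q_1q_2 z_j$ (coming from the denominator $1-q_1q_2 z_{i+1}/z_i$ together with the $\omega$'s) are exactly of the shape allowed in \cite{F}, and the ``wheel conditions'' reduce to a short vanishing check on the Laurent polynomial in the numerator. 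Since $\Upsilon$ is, by \cite{SV}, an algebra isomorphism whose normalization gives $\Upsilon(u_{1,d})=z_1^{d}$ --- which is precisely $P_{1,d}$, because for $k=1$ one has $n=1$, while the product of $\omega$'s, the symmetrization and the denominator are empty and the scalar prefactor is $1$ --- the theorem reduces to a statement purely internal to $\CA^+$: that the explicit expressions in the monomials $z^{m}$ which represent $u_{k,d}$ under the Burban--Schiffmann presentation produce exactly the minimal elements $P_{k,d}$ of Remark \ref{rem:minimal}.

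The main work is an induction on $k$. Fix $(k,d)$ primitive with $k\geq 2$. There is a Stern--Brocot splitting $(k,d)=(k_1,d_1)+(k_2,d_2)$ with $k_1,k_2\geq 1$ and $\det\!\big((k_1,d_1),(k_2,d_2)\big)=\pm1$; the triangle with vertices $0$, $(k_1,d_1)$, $(k,d)$ then contains no interior lattice points, so the Burban--Schiffmann relation reads $[u_{k_1,d_1},u_{k_2,d_2}]=c_{k,d}\,u_{k,d}$ for an explicit constant $c_{k,d}$. Applying $\Upsilon$ and the inductive hypotheses $\Upsilon(u_{k_i,d_i})=P_{k_i,d_i}$, the claim becomes the shuffle identity
\[
P_{k_1,d_1}*P_{k_2,d_2}-P_{k_2,d_2}*P_{k_1,d_1}=c_{k,d}\,P_{k,d}.
\]
I would prove this through a lemma computing brackets of arbitrary minimal shuffle elements: expanding both shuffle products via \eqref{eqn:mult} and combining the two terms, all contributions carrying a pole of order $\geq 2$ in some ratio $z_i/z_j$ --- the ``non-minimal'' part --- cancel, and what remains is the symmetrization of a single Laurent monomial over the staircase denominator $\prod_i(1-q_1q_2 z_{i+1}/z_i)$, i.e. a minimal element again. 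Identifying the surviving monomial with $\prod_i z_i^{\lfloor id/k\rfloor-\lfloor(i-1)d/k\rfloor}$ comes down to the combinatorial fact that this balanced exponent sequence is the only one obtainable by merging the staircases of $(k_1,d_1)$ and $(k_2,d_2)$, and bookkeeping of the scalar factors yields the normalization $(q_1-1)^k(1-q_2)^k/\big[(q_1^{n}-1)(1-q_2^{n})\big]$, here with $n=1$. (This inductive step could equivalently be organized via the coproduct of $\CA^+$: one shows that $\Upsilon(u_{k,d})$ and $P_{k,d}$ have the same coproduct modulo terms supported in fewer than $k$ variables, whence their difference is primitive, and since the primitive part of $\CA^+$ lies in one variable it vanishes for $k\geq 2$.)

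For $\gcd(k,d)=n>1$, write $(k,d)=n(a,b)$ with $(a,b)$ primitive, so $a=k/n$. The generators $u_{\ell(a,b)}$ along this ray are not empty-triangle brackets; instead they are tied to the imaginary-root elements $\theta_{\ell(a,b)}$ --- each of which \emph{is} an empty-triangle bracket, handled exactly as above, the only new point being that the edge from $0$ to $\ell(a,b)$ is non-primitive --- by the exponential/logarithmic relation $\sum_{\ell\geq 0}\theta_{\ell(a,b)}s^{\ell}=\exp\!\big(\mathrm{const}\cdot\textstyle\sum_{\ell\geq 1}u_{\ell(a,b)}s^{\ell}\big)$. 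Computing $\Upsilon(\theta_{\ell(a,b)})$ this way and inverting the exp/log relation inside $\CA^+$ produces $\Upsilon(u_{n(a,b)})$; the geometric sum $\sum_{x=0}^{n-1}(q_1q_2)^{x}\,z_{a(n-1)+1}\cdots z_{a(n-x)+1}/(z_{a(n-1)}\cdots z_{a(n-x)})$ in \eqref{eqn:70} is precisely what this inversion contributes, and the factor $(q_1^{n}-1)(1-q_2^{n})$ in the denominator is the length-$n$ Heisenberg normalization. The hard part throughout is the explicit shuffle computation underlying the lemma of the second paragraph --- verifying the complete cancellation of higher-order poles and that the surviving symmetrized monomial is genuinely the stated minimal element --- together with, in the present case, carrying out the exp/log inversion cleanly; this is bookkeeping rather than a conceptual difficulty, but it is where most of the proof's length resides.
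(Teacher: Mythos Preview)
Your main line of attack---a direct shuffle computation of $[P_{k_1,d_1},P_{k_2,d_2}]$ in which ``contributions carrying a pole of order $\geq 2$'' cancel---has a genuine gap. Minimality in the sense of Remark \ref{rem:minimal} is a statement about the asymptotics \eqref{eqn:limit}, not about pole orders; every shuffle element has the pole structure rigidly prescribed by \eqref{eqn:shuf}, so there are no higher-order poles to cancel. More seriously, the shuffle product of two symmetrized staircase expressions is a sum over all interleavings of the two staircases, and your assertion that in the commutator these collapse to a \emph{single} staircase with exponent sequence $\lfloor id/k\rfloor - \lfloor (i-1)d/k\rfloor$ is exactly the nontrivial combinatorial fact at the core of the theorem; it is not bookkeeping, and you give no mechanism for it. Your parenthetical coproduct alternative is much closer to the right idea but is stated incorrectly: the primitive part of $\CA^+$ does \emph{not} lie in one variable (the $P_{k,d}$ themselves are primitive in the relevant sense for $k\geq 2$). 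The correct one-dimensionality statement is Remark \ref{rem:minimal}: in each bidegree the space of minimal elements is at most a line.

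The paper exploits precisely that line. It first shows (Lemma \ref{lem:imp}, via an inductive coproduct computation using \eqref{eqn:relation}) that $\Upsilon(u_{k,d})$ is minimal; then it shows (Proposition \ref{prop:shuffle}) that the explicit sum $\sum_{r+s=n-1} q^s X_{k,d}^{(0^r1^s)}$ is primitive for the leading-order coproduct $\Delta_{d/k}$, via the slope estimate \eqref{eqn:simpine} and the telescoping identity of Proposition \ref{prop:drag}; hence the two agree up to a scalar. The scalar is then pinned down not by normalizing at $k=1$ and propagating through commutators, but by evaluating both sides under the specialization homomorphism $\varphi$ of Subsection \ref{sub:fin} (Propositions \ref{prop:ph} and \ref{prop:2}). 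Your exp/log treatment of the non-primitive case $n>1$ is on the right track and matches the paper's organization, but it rests on the same minimality argument rather than on a direct commutator computation.
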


The above formula will feature in \cite{Ne}, where we will use it to identify $P_{k,d}$ with certain geometric operators that act on the $K-$theory of the moduli space of sheaves. By appealing to a large framework that connects knot invariants with Hilbert schemes through the category of representations of the rational Cherednik algebra, this will allow us to produce new formulas for torus knot invariants in \cite{GN}. Let us say a few things about the structure of this paper: \\

\begin{itemize}

\item In Section \ref{sec:shuffle}, we recall the definition of the shuffle algebra $\CA^+$, and introduce the crucial notion of slope. \\ %We state Lemmas \ref{lem:monic} and \ref{lem:monic2}, which are generalizations of a result of \cite{F}.  \\

\item In Section \ref{sec:elliptic}, we define the elliptic Hall algebra $\CE^+$ via generators and relations, and prove that it is isomorphic to $\CA^+$. \footnote{With our definition of the shuffle algebra, the map $\Upsilon:\CE^+ \longrightarrow \CA^+$ constructed by Schiffmann and Vasserot is a priori only known to be injective, and we will show that it is also surjective} \\

\item In Section \ref{sec:double}, we give formulas for the bialgebra structure on $\CA^+$, and show that $\Upsilon:\CE^+ \cong \CA^+$ preserves this structure, thus inducing an isomorphism $\CE \cong \CA$ between the Drinfeld doubles of the algebras in question. \\

\item In Section \ref{sec:imp}, we prove that $P_{k,d} := \Upsilon(u_{k,d})$ is completely determined by the minimality property of Remark \ref{rem:minimal}. \\

\item In Section \ref{sec:exp}, we show that the shuffle element in the RHS of \eqref{eqn:70} is minimal, thus proving Theorem \ref{thm:main} up to a constant. We then check that this constant is $1$. \\

\item In Section \ref{sec:app}, we present an Appendix where we prove the more computational results of the paper. \\

\end{itemize}

I would like to thank Igor Burban, Boris Feigin, Andrei Okounkov, Alexander Tsymbaliuk and Eric Vasserot for their interest and numerous helpful discussions. I am very grateful to the referee for many useful suggestions. \\

\section{The Shuffle Algebra}
\label{sec:shuffle}

\subsection{}

We will work over the field $\kk = \BC(q_1,q_2)$ and let us write $q=q_1q_2$. Consider an infinite set of variables $z_1,z_2,...$, and let us look at the $\kk-$vector space:
\begin{equation}
\label{eqn:big}
\CV = \bigoplus_{k \geq 0} \sym_{\kk}(z_1,...,z_k),
\end{equation}
bigraded by $k$ and homogenous degree. We endow it with a $\kk-$algebra structure via the \textbf{shuffle product}:
$$
P(z_1,...,z_k) * Q(z_1,...,z_l) =
$$
\begin{equation}
\label{eqn:mult}
= \frac 1{k!l!} \textrm{Sym} \left[P(z_1,...,z_k)Q(z_{k+1},...,z_{k+l}) \prod_{i=1}^k \prod_{j=k+1}^{k+l} \omega \left( \frac {z_i}{z_j} \right) \right]
\end{equation}
where:
\begin{equation}
\label{eqn:factor}
\omega(x)  = \frac {(x - 1)(x - q)}{(x - q_1)(x - q_2)}
\end{equation}
and \textrm{Sym} denotes the symmetrization operator on rational functions:
$$
\textrm{Sym}\left( P(z_1,...,z_k) \right) = \sum_{\sigma \in S(k)} P(z_{\sigma(1)},...,z_{\sigma(k)})
$$

%\begin{equation}
%\label{eqn:mult2}
%P(z_1,...,z_k)*Q(z_{1},...,z_{l}) = \mathop{\sum_{\{1,...,k+l\} = A \sqcup B}}_{|A|=k, |B|=l} P(z_A)Q(z_B) \prod_{a\in A}\prod_{b\in B} \omega (z_a,z_b) \qquad 
%\end{equation}
%where for $A=\{a_1,...,a_k\}$, we write $P(z_A) = P(z_{a_1},...,z_{a_k})$. \\

\subsection{}

The \textbf{shuffle algebra} $\CA^+$ (see \cite{F}) \footnote{Actually, the algebra $\CA_0^+$ studied in \emph{loc. cit.} is defined with respect to the function $\omega_0(x,y)  = (x - y q^{-1}_1)(x - y q^{-1}_2)(x - yq)/(x-y)^3$ instead of \eqref{eqn:factor}. The two are isomorphic via the map: $$\CA^+ \longrightarrow \CA_0^+, \qquad \qquad P(z_1,...,z_k) \longrightarrow P(z_1,...,z_k) \prod_{1\leq i \neq j \leq k} \frac {(z_i-q_1z_j)(z_i-q_2z_j)}{q^{1/2}(z_i-z_j)^2}$$} is defined as the subspace of $\CV$ consisting of rational functions of the form:
\begin{equation}
\label{eqn:shuf}
P(z_1,...,z_k) =  \frac {p(z_1,...,z_k) \cdot \prod_{1\leq i<j \leq k} (z_i - z_j)^2}{\prod_{1\leq i \neq j \leq k} (z_i - q_1 z_j)(z_i - q_2 z_j)} , \qquad k \geq 1
\end{equation}
where $p$ is a symmetric Laurent polynomial that satisfies the \textbf{wheel conditions}:
\begin{equation}
\label{eqn:wheel}
p(z_1,z_2,z_3,...) = 0 \textrm{ whenever } \left\{ \frac {z_1}{z_2} , \frac {z_2}{z_3}, \frac {z_3}{z_1} \right\} =\left \{q_1,q_2,\frac 1q \right\}
\end{equation}
This condition is vacuous for $k\leq 2$. We will call elements of $\CA^+$ shuffle elements. The following proposition says that $\CA^+$ is an algebra: \\

\begin{proposition}
\label{prop:0}

If $P,P' \in \CA^+$, then $P*P' \in \CA^+$.  \\

\end{proposition}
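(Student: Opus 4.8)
The plan is to verify directly that $P * P'$ has the structural form \eqref{eqn:shuf} and satisfies the wheel conditions \eqref{eqn:wheel}. Write $P \in \CA^+$ in $k$ variables and $P' \in \CA^+$ in $l$ variables, each of the form \eqref{eqn:shuf} with numerators $p$ and $p'$ respectively. First I would substitute these expressions into the definition \eqref{eqn:mult} of the shuffle product and track the denominators. The product $P(z_1,\dots,z_k)P'(z_{k+1},\dots,z_{k+l})\prod_{i\le k}\prod_{j>k}\omega(z_i/z_j)$, before symmetrization, is a rational function whose denominator is a product of factors $(z_a - q_1 z_b)$ and $(z_a - q_2 z_b)$ over ordered pairs $a\ne b$ in $\{1,\dots,k+l\}$: the "internal" ones come from $P$ and $P'$, while the "crossing" ones $a\le k<b$ (and, from $\omega(z_i/z_j)=\tfrac{(z_i-z_j)(z_i-qz_j)}{(z_i-q_1z_j)(z_i-q_2z_j)}$, only the pairs with $i\le k<j$) come from the $\omega$ factors; one also picks up numerator factors $(z_i-z_j)$ and $(z_i-qz_j)$ for $i\le k<j$ which, together with the $(z_a-z_b)^2$ coming from $P$ and $P'$, must be organized into the required $\prod_{a<b}(z_a-z_b)^2$. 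So the first key step is a bookkeeping lemma: the pre-symmetrization expression equals $\tilde p(z_1,\dots,z_{k+l})\cdot\frac{\prod_{a<b,\,a,b \text{ both} \le k \text{ or both}>k}(z_a-z_b)^2}{\prod_{a\ne b}(z_a-q_1z_b)(z_a-q_2z_b)}\cdot\prod_{i\le k<j}\frac{(z_i-z_j)(z_i-qz_j)}{(z_i-q_1z_j)(z_i-q_2z_j)}$ for a Laurent polynomial $\tilde p$, and after multiplying and dividing by the missing crossing square factors one rewrites this as $\frac{\tilde p\cdot\text{(crossing correction)}\cdot\prod_{a<b}(z_a-z_b)^2}{\prod_{a\ne b}(z_a-q_1z_b)(z_a-q_2z_b)}$; the point is that no new poles are created, because every crossing $(z_i-q_1z_j)$ or $(z_i-q_2z_j)$ in a denominator is matched by a crossing factor appearing in the full denominator of \eqref{eqn:shuf} for $k+l$ variables.

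Next I would symmetrize. Applying $\mathrm{Sym}$ over $S(k+l)$ to an expression of the shape $\frac{(\text{Laurent polynomial})\cdot\prod_{a<b}(z_a-z_b)^2}{\prod_{a\ne b}(z_a-q_1z_b)(z_a-q_2z_b)}$ keeps the denominator and the $\prod(z_a-z_b)^2$ invariant (they are already symmetric up to sign, and the square makes the sign issue vanish), so the result is again of this form with a symmetric Laurent-polynomial numerator; the factor $\tfrac1{k!l!}$ is harmless. Hence $P*P'$ lies in $\CV$ and has the denominator structure required by \eqref{eqn:shuf}, modulo checking that the numerator is a genuine (symmetric) Laurent polynomial rather than merely a rational function — i.e.\ that the potential poles along $z_a = q_i z_b$ for \emph{internal-to-one-block} pairs, which are already killed inside $P$ or $P'$, stay killed, and that the crossing factors $(z_i-z_j)(z_i-qz_j)$ correctly cancel. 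This cancellation is exactly what the shape of $\omega$ is designed for, so it is a direct computation.

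The last and genuinely substantive step is the wheel condition \eqref{eqn:wheel}: I must show that the numerator $\tilde p$ of $P*P'$ vanishes whenever $(z_1/z_2,z_2/z_3,z_3/z_1)=(q_1,q_2,q^{-1})$. Since $\tilde p$ is a sum over $\sigma\in S(k+l)$ of terms, I would fix such a "wheel" specialization of $(z_1,z_2,z_3)$ and analyze each term. For a given $\sigma$, the three indices $1,2,3$ are distributed among the first block $\{1,\dots,k\}$ and the second block $\{k+1,\dots,k+l\}$ (after relabeling by $\sigma$). If all three land in the same block, the corresponding term vanishes because $p$ (or $p'$) already satisfies the wheel condition. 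If they are split $2$--$1$ between the blocks, then exactly one of the three ratios $z_a/z_b$ with $a,b$ in different blocks equals $q_1$, $q_2$, $q^{-1}$, $q_1^{-1}$, $q_2^{-1}$ or $q$ — in particular a crossing $\omega$ factor $\omega(z_i/z_j)$ hits either a zero (from $(x-1)(x-q)$, but here the arguments are $q_1,q_2,q^{-1}$, not $1$ or $q$, so this is not automatic) or, more usefully, the surviving denominator factor is cancelled so that the term is finite and the numerator picks up a zero; the standard argument is that the terms for $\sigma$ and for $\sigma$ composed with the transposition swapping the two split indices cancel in pairs. I expect this pairwise-cancellation-plus-wheel-of-$P$-and-$P'$ argument to be the main obstacle: one must check carefully that the residues match and that the $2$--$1$ split always forces one of the two configurations that either vanishes or cancels against a partner. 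Once this is established, $\tilde p$ vanishes on the wheel, so $P * P' \in \CA^+$, completing the proof.
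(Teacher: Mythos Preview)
Your overall strategy --- denominator bookkeeping to get the shape \eqref{eqn:shuf}, then a case analysis on the wheel condition --- matches the paper's. But the part you flag as ``the main obstacle'' is where your plan diverges and becomes unnecessarily complicated, and as written it is not a complete argument.

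The paper's key move is an explicit reorganization that you stop just short of. After substituting \eqref{eqn:shuf} for $P,P'$ into \eqref{eqn:mult} and pulling out the full symmetric denominator $\prod_{a\ne b}(z_a-q_1z_b)(z_a-q_2z_b)$, one obtains
\[
P*P' \;=\; \frac{1}{k!k'!}\cdot\frac{\prod_{a<b}(z_a-z_b)^2}{\prod_{a\ne b}(z_a-q_1z_b)(z_a-q_2z_b)}\cdot
\mathrm{Sym}\!\left[\,p\cdot p'\cdot\!\!\prod_{i\le k<j}\frac{(z_i-qz_j)(z_j-q_1z_i)(z_j-q_2z_i)}{z_i-z_j}\,\right].
\]
The crucial point is that pulling out the \emph{full} symmetric denominator forces the ``reverse'' crossing factors $(z_j-q_1z_i)(z_j-q_2z_i)$ (for $i\le k<j$) into the numerator, alongside the $(z_i-qz_j)$ coming from $\omega$. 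Your bookkeeping paragraph only tracks the $(z_i-q_1z_j)(z_i-q_2z_j)$ direction from $\omega$ and the $(z_a-z_b)^2$ bookkeeping, so you never isolate these three crossing factors together.

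Once you have them, the $2$--$1$ split case is immediate and termwise, with no cancellation needed: if among the three wheel variables at least one pair $(z_\alpha,z_\beta)$ straddles the two blocks, then the crossing numerator contains $(z_\alpha-qz_\beta)(z_\beta-q_1z_\alpha)(z_\beta-q_2z_\alpha)$ (or the same with $\alpha,\beta$ swapped), which vanishes precisely when $z_\beta/z_\alpha\in\{q_1,q_2,q^{-1}\}$ --- and one of the two crossing pairs always satisfies this on the wheel. So every summand of the $\mathrm{Sym}$ vanishes individually in the split case, just as in the same-block case via the wheel conditions on $p,p'$.

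Your proposed ``pairwise cancellation by composing with a transposition'' is therefore unnecessary; it is also not clearly correct as stated (it is not specified which transposition, and there is no reason the two summands should be exact negatives). Replace that step with the explicit factorization above and the proof goes through directly. The remaining point --- that the $\mathrm{Sym}$ is a Laurent polynomial despite the apparent simple poles at $z_i=z_j$ --- is handled, as you anticipated, by symmetry: a symmetric rational function with only simple poles along $z_i=z_j$ is regular there.
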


\begin{proof} Let us write the shuffle elements $P(z_1,...,z_k)$ and $P'(z_1,...,z_{k'})$ in the form \eqref{eqn:shuf}. By \eqref{eqn:mult}, we have:
$$
P*P' = \frac 1{k! k'!} \cdot \frac {\prod_{1\leq i<j \leq k+k'} (z_i - z_j)^2}{\prod_{1\leq i \neq j \leq k+k'} (z_i - q_1 z_j)(z_i - q_2 z_j)} \cdot
$$
$$
\sym\left[ p(z_1,...,z_k) p'(z_{k+1},...,z_{k+k'}) \prod_{1\leq i\leq k < j \leq k+k'} \frac {(z_i- qz_j)(z_j-q_1z_i)(z_j-q_2z_i)}{z_i - z_j} \right] 
$$
The expression on the last line above is a rational function with at most simple poles at $z_j=z_i$. Because it is symmetric, it must necessarily be regular at $z_j=z_i$, and therefore it is a Laurent polynomial in the $z$ variables. To prove that $P*P'\in \CA^+$, we need only prove that the $\sym$ satisfies the wheel conditions \eqref{eqn:wheel}. In fact, we will show that every one of its summands does so. \\

To see this, note that we need to specialize three of the variables as in \eqref{eqn:wheel} and show that the given summand of the above $\sym$ vanishes. If all three of the chosen variables are among $\{z_1,...,z_k\}$ or $\{z_{k+1},...,z_{k+k'}\}$, then the summand vanishes because $p$ and $p'$ satisfy the wheel conditions themselves. If one of the variables is in $\{z_1,...,z_k\}$ and two are in $\{z_{k+1},...,z_{k+k'}\}$ (or vice-versa) then the product:
$$
\prod_{1\leq i\leq k < j \leq k+k'} \frac {(z_i- qz_j)(z_j-q_1z_i)(z_j-q_2z_i)}{z_i - z_j}
$$
vanishes, and therefore so does the respective summand. 

\end{proof}

\subsection{} Note that this definition of the shuffle algebra differs slightly from the one in \cite{SV}, where the authors actually work with the subalgebra:
\begin{equation}
\label{eqn:sub}
\oCA^+ \subset \CA^+
\end{equation}
generated by the elements $z_1^d \in \CA^+$, as $d\in \BZ$. An important result of the present paper is that the two algebras actually coincide, namely: \\

\begin{theorem}
\label{thm:coincide}

The above inclusion is an equality:
$$
\oCA^+ = \CA^+
$$ 
In other words, the shuffle algebra is generated by degree one elements. \\

\end{theorem}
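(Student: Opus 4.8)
The plan is to prove $\CA^+ = \oCA^+$ by an inductive argument on the number of variables $k$, using the notion of \textbf{slope} introduced in Section \ref{sec:shuffle} to control which shuffle elements can be built out of degree-one generators. The key structural input is that the shuffle product of degree-one elements spans a large supply of elements in $\sym_\kk(z_1,\dots,z_k)$, and the wheel conditions \eqref{eqn:wheel} together with the pole restrictions in \eqref{eqn:shuf} cut the ambient space $\sym_\kk(z_1,\dots,z_k)$ down enough that these products exhaust $\CA^+$ in each bidegree.

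First I would fix the bidegree $(k,d)$ and consider the finite-dimensional $\kk$-vector space $\CA^+_{k,d}$ of shuffle elements with $k$ variables and homogeneous degree $d$; by \eqref{eqn:shuf} this is isomorphic to the space of symmetric Laurent polynomials $p(z_1,\dots,z_k)$ of the appropriate degree satisfying the wheel conditions \eqref{eqn:wheel}, so it is finite-dimensional. Next I would produce enough elements inside $\oCA^+_{k,d}$: taking shuffle products $z_1^{d_1} * \cdots * z_1^{d_k}$ with $d_1 + \cdots + d_k = d$ gives, after expanding \eqref{eqn:mult}, an explicit family of symmetric rational functions lying in $\oCA^+$, and I would compute their leading behavior (e.g. specializing variables to a geometric progression, or looking at the top-degree part in a suitable filtration). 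The crucial step is a dimension count or a triangularity statement: ordering monomials/partitions appropriately, the shuffle products $\{z_1^{d_1} * \cdots * z_1^{d_k}\}$ should be shown to span $\CA^+_{k,d}$, which one establishes by showing their ``leading terms'' are linearly independent and fill out the space cut out by the wheel conditions. This is where the minimality/slope analysis of Section \ref{sec:imp} does the real work: one shows that any shuffle element of slope outside a certain range is forced (by the wheel conditions and degree bounds) to be a product of lower elements, reducing to a bounded set of ``extremal'' cases that are handled directly.

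I expect the main obstacle to be exactly this spanning/dimension step — proving that the iterated shuffle products of degree-one elements are numerous enough to hit everything in $\CA^+_{k,d}$, rather than just a proper subalgebra. The difficulty is that the shuffle product \eqref{eqn:mult} mixes degrees and the symmetrization can produce unexpected cancellations, so controlling the image requires a careful choice of filtration (by slope, or by a lexicographic order on exponents) in which the product becomes triangular; getting the wheel conditions \eqref{eqn:wheel} to match up on both sides of the count is the delicate point. Once triangularity in the chosen order is in hand, the equality $\oCA^+_{k,d} = \CA^+_{k,d}$ follows bidegree by bidegree, and summing over all $(k,d)$ gives $\oCA^+ = \CA^+$, i.e. the shuffle algebra is generated by its degree-one elements.
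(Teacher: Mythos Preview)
There is a genuine gap. Your opening claim that $\CA^+_{k,d}$ is finite-dimensional is false: symmetric Laurent polynomials in $k\geq 2$ variables of fixed total degree $d$ form an infinite-dimensional space (for instance $z_1^{d+n}z_2^{-n}+z_1^{-n}z_2^{d+n}$ for all $n\geq 0$), and the wheel conditions \eqref{eqn:wheel} only impose finitely many linear constraints. The paper says this explicitly just before Proposition \ref{prop:bound}: the $\CA_{k,d}$ are infinite-dimensional. So a direct dimension count in bidegree $(k,d)$ cannot work, and neither can a linear-independence argument for the products $z_1^{d_1}*\cdots*z_1^{d_k}$, since there are infinitely many such products as well.

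The paper's fix is exactly the slope filtration you mention, but used much more precisely than as a reduction to ``extremal cases''. The finite-dimensional objects are the slope-bounded pieces $\CA^\mu_{k,d}$, and Proposition \ref{prop:bound} gives an explicit upper bound on $\dim \CA^\mu_{k,d}$ via the evaluation maps $\ph_\rho$ (your geometric-progression idea is in the right spirit here). The matching lower bound does \emph{not} come from analyzing arbitrary products $z_1^{d_1}*\cdots*z_1^{d_k}$; it comes from the elliptic Hall algebra. One knows (Lemma 5.6 of \cite{BS}) that $\dim \CE^\mu_{k,d}$ equals exactly the combinatorial count in \eqref{eqn:steve}, and $\Upsilon$ is injective, so it suffices to show $\Upsilon(\CE^\mu_{k,d})\subset \CA^\mu_{k,d}$. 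This in turn reduces to showing that $P_{k,d}=\Upsilon(u_{k,d})$ has slope $\leq d/k$, which is proved by induction on $k$ using the commutator relation \eqref{eqn:relation}: the naive estimate on the slope of $[P_{k_1,d_1},P_{k_2,d_2}]$ is not good enough, and one has to observe that the single offending leading term in $\xi$ appears identically in both $P_{k_1,d_1}*P_{k_2,d_2}$ and $P_{k_2,d_2}*P_{k_1,d_1}$, hence cancels in the commutator. This cancellation is the genuine technical heart of the argument, and nothing in your triangularity sketch substitutes for it.
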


\subsection{} 
\label{sub:degree}

The shuffle algebra $\CA^+$ is bigraded by the number of variables $k$ and the homogenous degree $d$ of our rational functions:
$$
\CA^+ = \bigoplus_{k\geq 0 , d\in \BZ} \CA_{k,d}
$$
Given a shuffle element $P(z_1,...,z_k) \in \CA_{k,d}$ and a number $\mu \in \BR$, we consider the limits: 
\begin{equation}
\label{eqn:limit}
\lim_{\xi \rightarrow \infty} \frac {P(\xi z_1,...,\xi z_i,z_{i+1},...,z_k)}{\xi^{\mu i}}
\end{equation}
We let $\CA_{k,d}^\mu \subset \CA_{k,d}$ denote the subspace of shuffle elements $P$ such that the above limits exist and are finite for all $i\in \{0,...,k\}$. Such a shuffle element $P$ is said to have \textbf{slope} $\leq \mu$. Then let us make the following simple observation, which will be given a proper proof in the Appendix: \\

\begin{proposition} 
\label{prop:algebra}

For any $\mu\in \BR$, the subspace:
$$
\CA^\mu:=\bigoplus_{k \geq 0, d\in \BZ} \CA^\mu_{k,d} \subset \CA^+
$$
is a subalgebra. \\

\end{proposition}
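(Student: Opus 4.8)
The plan is to show directly that the slope condition is preserved under the shuffle product, i.e. that if $P \in \CA_{k,d}^\mu$ and $P' \in \CA_{k',d'}^\mu$ then $P*P' \in \CA_{k+k',d+d'}^\mu$. Combined with the obvious facts that $\CA^\mu$ contains the unit $1 \in \CA_{0,0}$ and is closed under addition within each bigraded component, and with Proposition \ref{prop:0} (which already guarantees $P*P' \in \CA^+$), this yields that $\CA^\mu$ is a subalgebra.

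Fix $m \in \{0,\dots,k+k'\}$ and rescale $z_1,\dots,z_m$ by $\xi$. Expanding $(P*P')(\xi z_1, \dots, \xi z_m, z_{m+1}, \dots, z_{k+k'})$ via \eqref{eqn:mult}, it suffices to control each summand $T_\sigma$ of the symmetrization separately, where $\sigma$ runs over $S(k+k')$. In $T_\sigma$, the first $k$ permuted variables are the arguments of $P$ and the remaining $k'$ are the arguments of $P'$; let $a$ denote the number of rescaled variables among the first $k$, so $k-a$ of $P$'s arguments and $k'-(m-a)$ of $P'$'s arguments are un-rescaled. Since $P$ and $P'$ are symmetric, after relabelling the $z$'s we may write the $P$-factor of $T_\sigma$ as $P$ evaluated at $a$ rescaled arguments and $k-a$ un-rescaled ones, and similarly the $P'$-factor with $m-a$ rescaled arguments. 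By the hypothesis that $P,P'$ have slope $\le \mu$, dividing these two factors by $\xi^{\mu a}$ and $\xi^{\mu(m-a)}$ respectively produces expressions with finite limits as $\xi \to \infty$.

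It remains to handle the cross term $\prod \omega(w_i/w_j)$ coupling an argument $w_i$ of $P$ with an argument $w_j$ of $P'$. The key structural fact is that $\omega(x) \to 1$ both as $x \to \infty$ and as $x \to 0$: the numerator and denominator of \eqref{eqn:factor} have equal degree with equal leading coefficients, and equal constant terms because $q = q_1q_2$. Hence in each of the four possibilities for a pair $(w_i,w_j)$ — both rescaled, both un-rescaled, or one of each — the factor $\omega(w_i/w_j)$ has a finite limit as $\xi \to \infty$ (in the two mixed cases the ratio tends to $\infty$ or to $0$, and $\omega$ tends to $1$). Therefore each $T_\sigma$, divided by $\xi^{\mu a}\cdot\xi^{\mu(m-a)} = \xi^{\mu m}$, has a finite limit; summing over $\sigma$ shows that $\lim_{\xi\to\infty}(P*P')(\xi z_1,\dots,\xi z_m, z_{m+1},\dots)/\xi^{\mu m}$ exists and is finite. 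As this holds for every $m$, we conclude $P*P' \in \CA_{k+k',d+d'}^\mu$.

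The only point requiring care is the bookkeeping of which variables become large in a given summand, together with the verification of the asymptotics of $\omega$ at both $0$ and $\infty$; the poles of $\omega$ at $x = q_1, q_2$ are harmless, since they contribute poles in $\xi$ at finite values and do not affect the behaviour as $\xi \to \infty$. I do not expect a genuine obstacle: this is a power-counting argument made rigorous by the symmetry of $P$ and $P'$ and the normalization of $\omega$.
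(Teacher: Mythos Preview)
Your argument is correct and follows essentially the same route as the paper: expand the symmetrization in $(P*P')$, split the rescaled variables between $P$ and $P'$, invoke the slope hypothesis on each factor, and use $\omega(0)=\omega(\infty)=1$ to control the cross terms. The only difference is that the paper's proof in the Appendix records the explicit leading-order term of $(P*P')(\xi z_{\leq j}, z_{>j})$ as a sum of products $(P_i^{(1)}*P_{i'}^{(1)'})\cdot(P_i^{(2)}*P_{i'}^{(2)'})$ (formula \eqref{eqn:generalclaim}), a refinement needed later in the proof of Proposition \ref{prop:surj}; for Proposition \ref{prop:algebra} itself your finiteness argument is enough.
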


\subsection{} The subspaces $\CA^\mu_{k,d}$ give an increasing filtration of the infinite-dimensional vector space $\CA_{k,d}$:
$$
\CA_{k,d}^\mu \subset \CA_{k,d}^{\mu'} \qquad \textrm{if }\mu \leq \mu', \qquad \qquad \bigcup_{\mu} \CA_{k,d}^\mu = \CA_{k,d}
$$
The following proposition will show that the subspaces $\CA_{k,d}^\mu$ are finite dimensional, and it places an upper bound on their dimension. All of these bounds will be shown to be precise in Proposition \ref{prop:surj} below. \\ %Moreover, it shows that the smallest non-empty piece of the filtration is:

%$$
%\CA_{k,d}^{\frac dk} 
%$$ 
%which has dimension equal to $\leq p(\gcd(k,d))$, where $p(n)$ denotes the number of partitions of the natural number $n$. 

\begin{proposition} 
\label{prop:bound}

The vector space $\CA_{k,d}^\mu$ has dimension $\leq$ the number of unordered collections:
\begin{equation}
\label{eqn:steve}
(k_1,d_1),...,(k_t,d_t) \qquad  \textrm{such that} \quad \begin{cases} k_1+...+k_t = k, \\ d_1+...+d_t = d, \\ d_i \leq \mu k_i \quad \forall i \end{cases}
\end{equation}
where $t\geq 1$ is any natural number, $k_i\in \BN$ and $d_i \in \BZ$. \\

\end{proposition}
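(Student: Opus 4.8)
The plan is to argue by induction on $k$, the number of variables, the case $k=1$ being immediate: $\CA_{1,d}$ is spanned by $z_1^d$, which has slope exactly $d$, so $\dim\CA^\mu_{1,d}$ equals $1$ if $d\le\mu$ and $0$ otherwise, which agrees with \eqref{eqn:steve}.

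For the inductive step, the main device I would use is the leading term of a shuffle element as one variable is sent to infinity. Given $P\in\CA_{k,d}$ written as in \eqref{eqn:shuf} with numerator $p$, the specialization $P(\xi z_1,z_2,\dots,z_k)$ has, as $\xi\to\infty$, leading behaviour $\xi^{e}z_1^{e}\cdot G(z_2,\dots,z_k)$, where $e=e(P):=\deg_{z_1}p-2(k-1)$ and $G$ is, up to an explicit scalar, the coefficient of $z_1^{\deg_{z_1}p}$ in $p$ divided by the surviving product of $z$-factors. One checks that $G$ is again a shuffle element: it is a symmetric Laurent polynomial of the correct degree, and it satisfies the wheel conditions because the wheels on $p$ not involving $z_1$ pass to its top $z_1$-coefficient; thus $G=G_P\in\CA_{k-1,d-e}$. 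If $P$ has slope $\le\mu$ then $e\le\mu$ (the $i=1$ slope bound), and, by plugging weakly decreasing one-parameter scalings into the slope inequalities for $P$, one gets that the scaling-degrees of $G_P$ (i.e. the $\deg_\xi$ of $G_P(\xi z_2,\dots,\xi z_{j+1},z_{j+2},\dots,z_k)$) are bounded by $(j+1)\mu-e$ for $1\le j\le k-1$ — so $G_P$ lies in a slightly slope-relaxed version of $\CA^\mu_{k-1,d-e}$.

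Now filter $\CA^\mu_{k,d}$ by the increasing subspaces $F_e=\{P:\deg_{z_1}p_P\le e+2(k-1)\}$; these exhaust $\CA^\mu_{k,d}$, and $e$ runs over a finite range (bounded above by $\mu$ and below because $p_P$ is a symmetric polynomial of fixed total degree). The assignment $P\mapsto G_P$ is linear with kernel exactly $F_{e-1}$, hence embeds $F_e/F_{e-1}$ into $\CA_{k-1,d-e}$. Combining $\dim\CA^\mu_{k,d}\le\sum_e\dim(F_e/F_{e-1})$ with the inductive hypothesis applied to the targets — and, where needed, invoking Proposition \ref{prop:algebra} to keep the relevant subalgebras and slopes under control — reduces the statement to a combinatorial inequality between the count \eqref{eqn:steve} for $(k,d,\mu)$ and a sum of such counts in $k-1$ variables.

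The hard part is precisely this last comparison. The crude estimate $\dim(F_e/F_{e-1})\le\dim(\text{slope-relaxed }\CA_{k-1,d-e})$ is too lossy: summed over $e$ it overshoots \eqref{eqn:steve} already for $k=3$. To recover the sharp bound one must use that $G_P$ is far from arbitrary inside its target: since $P\in F_e$, the lower $z_1$-graded pieces of $p_P$ must exist and be compatible with the wheel conditions that do involve $z_1$, and eliminating those pieces forces genuine additional linear relations on $G_P$. Pinning down these relations — equivalently, re-normalizing the pair $(e,G_P)$ and the inductive statement so that \eqref{eqn:steve} closes on the nose — is the crux, and the more computational portion of this is presumably deferred to the Appendix.
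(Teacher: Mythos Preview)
There is a genuine gap: you correctly diagnose that the crude bound from your $z_1\to\infty$ filtration overshoots \eqref{eqn:steve} already for $k=3$, but you do not supply the missing constraints, and the paper does \emph{not} defer any part of this proof to the Appendix --- the argument is given in full immediately after the statement. The difficulty with your scheme is structural: the collections in \eqref{eqn:steve} are indexed by partitions of $k$ into parts $k_1,\dots,k_t$, and peeling off a single variable does not respect this combinatorics. The ``additional linear relations on $G_P$'' coming from wheel conditions that involve $z_1$ are real, but you have not identified them or counted them, and it is far from clear that any renormalization of your inductive hypothesis makes the count close on the nose.

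The paper's proof (generalizing the $d=\mu=0$ case from \cite{F}) is entirely different and does not induct on $k$. For each partition $\rho=(k_1\ge\dots\ge k_t)$ of $k$ it introduces an evaluation map $\varphi_\rho$ that specializes the variables in blocks to geometric $q$-progressions, $z_{k_1+\dots+k_{s-1}+x}\mapsto y_s q^x$, and filters $\CA^\mu_{k,d}$ by the subspaces $\CA^{\mu,\rho}_{k,d}=\bigcap_{\rho'>\rho}\ker\varphi_{\rho'}$ under the dominance order. On each such piece, the wheel conditions together with the vanishing under $\varphi_{\rho'}$ for all $\rho'>\rho$ force $\varphi_\rho(P)$ to be divisible by an explicit product $r_0$ of linear factors in the $y_s$; a degree count in each variable $y_s$ --- this is precisely where the slope hypothesis enters --- then shows the quotient $r/r_0$ lies in a space of dimension equal to the number of tuples $(d_1,\dots,d_t)$ with $\sum d_i=d$ and $d_i\le\mu k_i$. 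Summing over all partitions $\rho$ of $k$ reproduces \eqref{eqn:steve} exactly, with no induction on $k$ and no slack to recover.
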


\begin{proof} This proposition was first stated and proved in the special case $d=\mu=0$ in \cite{F}, and we will generalize their idea in order to obtain the desired result. For any partition $\rho = \{k_1 \geq... \geq k_t>0\}$ of $k$, consider the evaluation map:
$$
\ph_{\rho}:\CA^\mu_{k,d} \longrightarrow \kk[y^{\pm 1}_1,...,y^{\pm 1}_{t}], 
$$

$$
\ph_\rho(P) = p(z_1,...,z_k)|_{z_{k_1+...+k_{s-1}+x}=y_s q^x, \quad \forall s \in \{1,...,t\} \ \forall x\in \{1,...,k_s\}}
$$
where $p$ is the Laurent polynomial of \eqref{eqn:shuf}. This construction gives rise to the subspaces:
\begin{equation}
\label{eqn:filt}
\CA^\mu_{k,d} \supset \CA^{\mu,\rho}_{k,d} = \bigcap_{\rho' > \rho} \ker \ph_{\rho'} 
\end{equation}
where $>$ is the dominance ordering \footnote{We recall that the dominance ordering is $\rho'\geq \rho \Leftrightarrow \rho'_1+...+\rho'_i \geq \rho_1+...+\rho_i$ for all $i$. We write $\rho'>\rho$ if $\rho'\geq \rho$ and $\rho'\neq \rho$} on partitions of $k$. It is easy to see that these subspaces form a filtration of $\CA^\mu_{k,d}$ (if we set $\CA^{\mu,(k)}_{k,d}=\CA^\mu_{k,d}$), namely:
$$
\rho \leq \rho' \Longrightarrow \CA^{\mu,\rho}_{k,d} \subset \CA^{\mu,\rho'}_{k,d}
$$
Then the desired upper bound on the dimension of $\CA_{k,d}^\mu$ would follow from the inequalities:
\begin{equation}
\label{eqn:ineqclaim}
\dim \ph_\rho \left(\CA^{\mu,\rho}_{k,d} \right) \leq \ \# (d_1,...,d_t) \ \textrm{ such that \eqref{eqn:steve} holds}
\end{equation}
It should be remarked that the RHS counts the number of partially ordered tuples $(d_1,...,d_t)$. This means that if $k_i=k_j$ then we disregard the ordering between $d_i$ and $d_j$. Let us now prove \eqref{eqn:ineqclaim}. Take a shuffle element $P \in \CA^{\mu,\rho}_{k,d}$ and look at the Laurent polynomial $r=\ph_\rho(P)$. This Laurent polynomial is partially symmetric, in the same sense as the RHS of \eqref{eqn:ineqclaim} is partially ordered: if $k_i=k_j$ then $r$ is symmetric in $y_i$ and $y_j$. Because $P$ satisfies the wheel conditions \eqref{eqn:wheel}, the Laurent polynomial $r$ vanishes for:
\begin{equation}
\label{eqn:zero1}
y_j = q_2 q^{a-b} y_i, \quad a \in \{1,...,k_i-1\}, \quad b \in \{1,...,k_j \}
\end{equation}
	
\begin{equation}
\label{eqn:zero2}
y_j = q_1 q^{a-b} y_i, \quad a \in \{1,...,k_i-1\}, \quad b \in \{ 1,...,k_j \}
\end{equation}
for $i<j$, with the correct multiplicities. Because $P$ lies in $\CA^{\mu,\rho}_{k,d} = \bigcap_{\rho' > \rho} \ker \ph_\rho$, we see that $r$ also vanishes for:
\begin{equation}
\label{eqn:zero3}
y_j = q^{k_i-b+1}y_i \quad \textrm{ and } \quad y_j = q^{-b}y_i, \qquad b \in \{1,...,k_j\}
\end{equation}
for $i<j$. Therefore, the Laurent polynomial $r$ is divisible by:
$$
r_0 = \prod_{1\leq i < j \leq t} \left[ \prod_{b=1}^{k_j} (y_j-q^{k_i-b+1}y_i)(y_j-q^{-b}y_i) \prod_{b=1}^{k_j} \prod_{a=1}^{k_i-1} (y_j - q_2 q^{a-b}y_i)(y_j - q_1 q^{a-b}y_i) \right]
$$
This polynomial has total degree:
$$
\deg(r_0) = \sum_{i<j} 2k_i k_j = k^2 - \sum_i k_i^2
$$
and degree at most:
$$
\deg_{y_i}(r_0) = \sum_{i\neq j} 2k_i k_j = 2k k_i - 2k_i^2
$$
in each variable $y_i$. As for $r$, it's easy to see that it has total degree:
$$
\deg(r) = k(k-1)+d
$$
Because the slope of $P$ is $\leq \mu$, then it has degree in each variable at most:
$$
\deg_{y_i}(r) \leq 2k k_i - k_i(k_i+1) + \mu k_i
$$
So the quotient $r/r_0$ is a Laurent polynomial of total degree:
$$
\deg(r/r_0) = \sum_i k_i(k_i-1) + d
$$
and degree in each variable at most:
$$
\deg_{y_i}(r/r_0) \leq k_i(k_i-1) + \mu k_i
$$
Such Laurent polynomials are spanned by monomials:
$$
y_1^{d_1+k_1(k_1-1)} ... y_t^{d_t+k_t(k_t-1)}
$$
where $d_1+...+d_t = d$ and $d_i \leq \mu k_i$ for all $i$. When $k_i=k_j$, both $r$ and $r_0$ are symmetric in $y_i$ and $y_j$, so we disregard the order between $d_i$ and $d_j$ in the above count. We conclude that $r=\ph_\rho(P)$ lies in a vector space of dimension exactly equal to the RHS of \eqref{eqn:ineqclaim}, thus completing the proof. 

\end{proof}

\begin{remark}
\label{rem:minimal}

The same proof also shows that the subspace of $\CA_{k,d}$ consisting of shuffle elements such that:
$$
\lim_{\xi \rightarrow \infty} \frac {P(\xi z_1,...,\xi z_i,z_{i+1},...,z_k)}{\xi^{\frac {di}k}} = 0 \qquad \forall \quad i\in \{1,...,k-1\}
$$
is at most one-dimensional. We will show that it is exactly one-dimensional, and will therefore call shuffle elements $P$ that verify this condition \textbf{minimal}. \\

\end{remark}

\section{The Elliptic Hall algebra}
\label{sec:elliptic}

\subsection{}
\label{sub:semi}

In this section, we will often work with \textbf{quasi-empty} triangles, which we define by the condition that their vertices are of the form  $X=(0,0)$, $Y=(k_2,d_2)$, $Z=(k_1+k_2,d_1+d_2)$, and satisfy: \\

\begin{itemize} 

\item $k_1,k_2 > 0$ \\

\item $\frac {d_1}{k_1}> \frac {d_2}{k_2}$ \\

\item there are no lattice points inside the triangle, nor on at least one of the edges $XY$, $YZ$ \\

\end{itemize} 

If there are no points on both $XY$ and $YZ$, we call the triangle \textbf{empty}. For example, the triangle below is empty. \\

\begin{picture}(100,150)(-110,-15)
\label{pic:par}

\put(0,0){\circle*{2}}\put(20,0){\circle*{2}}\put(40,0){\circle*{2}}\put(60,0){\circle*{2}}\put(80,0){\circle*{2}}\put(100,0){\circle*{2}}\put(120,0){\circle*{2}}\put(0,20){\circle*{2}}\put(20,20){\circle*{2}}\put(40,20){\circle*{2}}\put(60,20){\circle*{2}}\put(80,20){\circle*{2}}\put(100,20){\circle*{2}}\put(120,20){\circle*{2}}\put(0,40){\circle*{2}}\put(20,40){\circle*{2}}\put(40,40){\circle*{2}}\put(60,40){\circle*{2}}\put(80,40){\circle*{2}}\put(100,40){\circle*{2}}\put(120,40){\circle*{2}}\put(0,60){\circle*{2}}\put(20,60){\circle*{2}}\put(40,60){\circle*{2}}\put(60,60){\circle*{2}}\put(80,60){\circle*{2}}\put(100,60){\circle*{2}}\put(120,60){\circle*{2}}\put(0,80){\circle*{2}}\put(20,80){\circle*{2}}\put(40,80){\circle*{2}}\put(60,80){\circle*{2}}\put(80,80){\circle*{2}}\put(100,80){\circle*{2}}\put(120,80){\circle*{2}}\put(0,100){\circle*{2}}\put(20,100){\circle*{2}}\put(40,100){\circle*{2}}\put(60,100){\circle*{2}}\put(80,100){\circle*{2}}\put(100,100){\circle*{2}}\put(120,100){\circle*{2}}\put(0,120){\circle*{2}}\put(20,120){\circle*{2}}\put(40,120){\circle*{2}}\put(60,120){\circle*{2}}\put(80,120){\circle*{2}}\put(100,120){\circle*{2}}\put(120,120){\circle*{2}}

%\put(0,0){\line(3,2){119}}
\put(0,0){\line(2,1){42}}
\put(0,0){\line(3,2){120}}
\put(40,20){\line(4,3){80}}
\put(0,-10){\vector(0,1){150}}
\put(-10,0){\vector(1,0){150}}
%\put(0,0){\line(4,3){80}}
%\put(80,60){\line(2,1){42}}

\put(37,-8){\scriptsize{$k_2$}}
\put(77,-8){\scriptsize{$k_1$}}
\put(112,-8){\scriptsize{$k_1+k_2$}}

\put(-10,18){\scriptsize{$d_2$}}
\put(-10,58){\scriptsize{$d_1$}}
\put(-28,78){\scriptsize{$d_1+d_2$}}

\put(40,-20){\mbox{Figure \ref{pic:par}}}

\end{picture}

\subsection{}
\label{sub:elliptic}

The elliptic Hall algebra $\CE$ was studied in detail by Burban and Schiffmann in \cite{BS}, and we would like to compare the shuffle algebra with their viewpoint. By definition (\cite{BS},\cite{SV}), its positive half $\CE^+$ is generated by elements $u_{k,d}$ for $k \geq 1, d\in \mathbb{Z}$, under the relations:
\begin{equation}
\label{eqn:relation0}
[u_{k_1,d_1}, u_{k_2,d_2}] = 0, 
\end{equation}
whenever the points $(k_1,d_1), (k_2,d_2)$ are collinear, and:
\begin{equation}
\label{eqn:relation}
[u_{k_1,d_1}, u_{k_2,d_2}] = \frac {\theta_{k_1+k_2,d_1+d_2}}{\alpha_1}
\end{equation}
whenever the triangle with vertices $(0,0),(k_2,d_2),(k_1+k_2,d_1+d_2)$ is quasi-empty in the sense of Section \ref{sub:semi}. Here we set: 
\begin{equation}
\label{eqn:alpha}
\alpha_n = \frac {(q_1^n-1)(q_2^n-1)(q^{-n}-1)}n
\end{equation}

\begin{equation}
\label{eqn:exponential} 
\qquad \sum_{n=0}^{\infty} x^n \theta_{na,nb}  = \exp \left( \sum_{n=1}^\infty  \alpha_n  x^n u_{na,nb} \right)
\end{equation} 
for any $\gcd(a,b)=1$. The algebra $\CE^+$ is also bigraded by the two coordinates $k$ and $d$. The following Theorem has been proved in \cite{SV}:\\

\begin{theorem} 
\label{thm:sv}

The map $u_{1,d} \longrightarrow z_1^d$ extends to an injective algebra morphism \footnote{A priori, our definition of the shuffle algebra is larger than the one used in \cite{SV}, and we therefore cannot infer that this map is also surjective. This will be proved in Proposition \ref{prop:surj} below}:

$$
\Upsilon: \CE^+ \longrightarrow \CA^+
$$
$$$$
\end{theorem}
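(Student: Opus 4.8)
The plan is to reconstruct the argument of \cite{SV}, organized around the slope filtration of Section \ref{sub:degree}. \textit{Existence of $\Upsilon$.} By \cite{BS}, the algebra $\CE^+$ is generated by the degree-one elements $u_{1,d}$, $d\in\BZ$: when $(k,d)$ is primitive and $k\geq 2$ one has $u_{k,d}=[u_{k_1,d_1},u_{k_2,d_2}]$ for any quasi-empty triangle with vertices $(0,0),(k_2,d_2),(k,d)$ — because $\theta_{k,d}=\alpha_1 u_{k,d}$ in this case, by \eqref{eqn:exponential} — while for $(k,d)=(na,nb)$ with $\gcd(a,b)=1$ one solves for $u_{na,nb}$ from the $\theta$'s via \eqref{eqn:exponential}, and each $\theta_{na,nb}$ is again a commutator of elements of smaller first coordinate. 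So to build $\Upsilon$ it is enough to produce shuffle elements $P_{k,d}\in\CA_{k,d}$ with $P_{1,d}=z_1^d$ that satisfy \eqref{eqn:relation0}--\eqref{eqn:relation}: one defines them by mimicking this recursion inside $\CA^+$, setting $P_{k,d}:=[P_{k_1,d_1},P_{k_2,d_2}]_*$ for a fixed quasi-empty triangle when $(k,d)$ is primitive and defining $P_{na,nb}$ through the shuffle analogue of \eqref{eqn:exponential} — which lives in the subalgebra of elements of slope $\leq b/a$ (Prop \ref{prop:algebra}). The content is then to check that the $P_{k,d}$ so obtained are independent of all the choices and satisfy \emph{every} relation \eqref{eqn:relation0}--\eqref{eqn:relation}, not only those used in the recursion.

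\textit{The relations, via minimality.} The decisive input is a slope estimate: for a quasi-empty triangle, the shuffle-commutator $[P_{k_1,d_1},P_{k_2,d_2}]_*$ has slope $\leq\frac{d_1+d_2}{k_1+k_2}$. This is proved by expanding \eqref{eqn:mult} and, in each symmetrized summand, weighing the degree it contributes against the order of vanishing forced by the numerator $\omega$-factors together with the wheel conditions — the same bookkeeping as in the proof of Prop \ref{prop:bound}. By Remark \ref{rem:minimal}, a shuffle element of this slope spans a space of dimension $\leq 1$; the same computation applied to \eqref{eqn:exponential} shows $\theta_{k_1+k_2,d_1+d_2}$ is minimal in the analogous sense. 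Hence the two sides of \eqref{eqn:relation} are proportional, and the constant is pinned down by a single evaluation $\ph_\rho$ as in the proof of Prop \ref{prop:bound}; the normalization \eqref{eqn:alpha} of $\alpha_1$ is precisely what makes it equal to $1$. The collinear relations \eqref{eqn:relation0} are handled in the same spirit: the shuffle-commutator of two elements of equal slope $\mu$ would otherwise populate a slope-$\leq\mu$ subspace of $\CA^+$ that the dimension bound of Prop \ref{prop:bound} forbids. I expect this step to be the main obstacle — in particular, showing that the relevant commutators are \emph{nonzero} (a genuine leading-coefficient computation, run through the maps $\ph_\rho$), and treating the non-primitive cases where both sides of \eqref{eqn:relation} involve products.

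\textit{Injectivity.} The map $\Upsilon$ sends $\CE^{+,\leq\mu}_{k,d}$, the span of the PBW monomials $u_{k_1,d_1}\cdots u_{k_t,d_t}$ with $k_1+\cdots+k_t=k$, $d_1+\cdots+d_t=d$ and $d_i\leq\mu k_i$ for all $i$, into $\CA^\mu_{k,d}$. By \cite{BS} the source has dimension exactly the count \eqref{eqn:steve}, whereas by Prop \ref{prop:bound} the target has dimension at most that same count; so it suffices to show $\Upsilon$ is injective on each finite-dimensional piece $\CE^{+,\leq\mu}_{k,d}$ — which, as a bonus, also forces surjectivity, i.e. Prop \ref{prop:surj}. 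Injectivity follows from the triangularity already present in the proof of Prop \ref{prop:bound}: order the PBW monomials by the partition $\rho=(k_1\geq\cdots\geq k_t)$ of $k$ in the dominance order; then $\ph_{\rho'}\bigl(\Upsilon(u_{k_1,d_1}\cdots u_{k_t,d_t})\bigr)=0$ for $\rho'>\rho$, while $\ph_\rho$ of it is a nonzero monomial in the evaluation variables up to lower-order terms. This makes the images of the PBW monomials linearly independent in $\CA^\mu_{k,d}$, completing the proof.
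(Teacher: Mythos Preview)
The paper does not prove Theorem~\ref{thm:sv}: it is quoted from \cite{SV}. The paper's own contribution here is the complementary surjectivity (Proposition~\ref{prop:surj}), whose proof \emph{uses} Theorem~\ref{thm:sv}. So there is no in-paper argument to compare your proposal against; I can only assess your sketch on its own terms.

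Two steps are not yet arguments. For the collinear relations \eqref{eqn:relation0}: the phrase ``the dimension bound of Prop~\ref{prop:bound} forbids'' is not correct as stated. The commutator $[P_{k_1,d_1},P_{k_2,d_2}]_*$ of two minimal collinear elements is again primitive for $\Delta_\mu$, hence lies in the at-most-one-dimensional minimal subspace of Remark~\ref{rem:minimal} --- but nothing you have written rules out its being a \emph{nonzero} multiple of $P_{k_1+k_2,d_1+d_2}$. One actually needs an evaluation: for instance the functional $\ph$ of Subsection~\ref{sub:fin} satisfies $\ph(P*Q)=\ph(P)\ph(Q)\,q_1^{(ld-ke)/2}$ by Proposition~\ref{prop:ph}, so on collinear elements the exponent vanishes and $\ph$ kills the commutator; combined with $\ph(P_{k,d})\neq 0$ this gives what you want. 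For the non-primitive instances of \eqref{eqn:relation}: both sides sit in $\CB^{d/k}$, which has dimension equal to the number of partitions of $n=\gcd(k,d)>1$, so one-dimensionality of the minimal space does not make them proportional. You have to control the full group-like structure of the $Q$'s, essentially rerunning the inductive proof of Lemma~\ref{lem:imp} --- but note that lemma, as written in the paper, already \emph{assumes} $\Upsilon$ exists, so you would be redoing it from scratch without that crutch. Your injectivity paragraph, by contrast, is sound once existence is granted: it is the same dimension count as in the paper's proof of Proposition~\ref{prop:surj}, and as you observe it delivers surjectivity simultaneously.
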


\subsection{} From \eqref{eqn:relation}, it is clear that $\CE^+$ is generated by degree 1 elements $u_{1,d}$. Therefore, the image of the map $\Upsilon$ is precisely the subalgebra $\oCA^+$ of \eqref{eqn:sub}. This means that Theorem \ref{thm:coincide} follows from the following: \\ 

\begin{proposition}
\label{prop:surj} 

The map $\Upsilon$ of Theorem \ref{thm:sv} is surjective. \\

\end{proposition}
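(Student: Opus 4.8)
The plan is to show surjectivity by a dimension count, leveraging the upper bound of Proposition~\ref{prop:bound} together with the injectivity of $\Upsilon$ from Theorem~\ref{thm:sv}. The key observation is that $\Upsilon$ is a morphism of bigraded algebras, so it suffices to prove surjectivity in each bidegree $(k,d)$. Since $\CA_{k,d} = \bigcup_\mu \CA_{k,d}^\mu$ is an increasing union of the finite-dimensional subspaces $\CA_{k,d}^\mu$, it is enough to show that the image of $\Upsilon$ contains $\CA_{k,d}^\mu$ for every $\mu$, i.e.\ that $\dim(\mathrm{im}\,\Upsilon \cap \CA_{k,d}^\mu) \geq \dim \CA_{k,d}^\mu$; combined with the bound of Proposition~\ref{prop:bound}, it suffices to exhibit, inside $\mathrm{im}\,\Upsilon$, a collection of elements of bidegree $(k,d)$ and slope $\leq \mu$ whose cardinality equals the number of unordered collections in \eqref{eqn:steve} and which are linearly independent.

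First I would produce the candidate elements on the elliptic-Hall side: for each unordered collection $(k_1,d_1),\dots,(k_t,d_t)$ with $\sum k_i = k$, $\sum d_i = d$ and $d_i \le \mu k_i$, form the product $\Upsilon(P_{k_1,d_1}) * \cdots * \Upsilon(P_{k_t,d_t}) \in \CA_{k,d}$, where $P_{k_i,d_i} = \Upsilon(u_{k_i,d_i})$ are (images of) the generators — or more conveniently, use the minimal shuffle elements $P_{k_i,d_i}$ of Remark~\ref{rem:minimal}, once one knows these lie in the image (this is exactly where Section~\ref{sec:imp} and the identification $P_{k,d}=\Upsilon(u_{k,d})$ feed in, but for pure surjectivity one can get away with the generators $z_1^{d}$ and iterated brackets). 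The next step is to check each such product has slope $\leq \mu$: this follows from Proposition~\ref{prop:algebra} (the $\CA^\mu$ are subalgebras) once each factor $P_{k_i,d_i}$ is known to have slope $\leq d_i/k_i \le \mu$, which is the defining minimality/slope property of these elements. So all these products live in $\CA_{k,d}^\mu \cap \mathrm{im}\,\Upsilon$.

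The main obstacle — and the heart of the argument — is linear independence of these products. Here I would use the filtration by the evaluation maps $\ph_\rho$ introduced in the proof of Proposition~\ref{prop:bound}: one shows that the leading term (with respect to the dominance order on partitions $\rho$ of $k$) of the product $P_{k_1,d_1}*\cdots*P_{k_t,d_t}$ is governed by the partition $\{k_1 \ge \cdots \ge k_t\}$, and that $\ph_\rho$ applied to it recovers, up to a nonzero constant, the monomial $y_1^{d_1+k_1(k_1-1)}\cdots y_t^{d_t+k_t(k_t-1)}$ appearing in the proof of the bound. Because distinct unordered collections give distinct such monomials (within a fixed $\rho$, distinct partially-ordered tuples $(d_i)$), and because the associated graded of the $\ph_\rho$-filtration injects into the span of these monomials, linear independence follows, and the count matches the bound exactly. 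Combining: $\dim \CA_{k,d}^\mu \leq \#\{\text{collections \eqref{eqn:steve}}\} = \dim(\text{span of our products}) \leq \dim(\mathrm{im}\,\Upsilon \cap \CA_{k,d}^\mu) \leq \dim \CA_{k,d}^\mu$, forcing equality throughout, hence $\CA_{k,d}^\mu \subseteq \mathrm{im}\,\Upsilon$. Letting $\mu \to \infty$ gives $\CA_{k,d} = \mathrm{im}\,\Upsilon$ in every bidegree, which is the claim. A delicate point to handle carefully is that the product on the elliptic side must be shown to span at least as large a space as the count of collections; this uses injectivity of $\Upsilon$ to transport the question to $\CE^+$, where the PBW-type basis of $\CE^+$ (indexed precisely by such unordered collections, by the defining relations \eqref{eqn:relation0}–\eqref{eqn:relation}) guarantees the needed independence.
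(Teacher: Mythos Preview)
Your overall framework matches the paper's exactly: reduce to each bidegree, filter by slope, use the upper bound from Proposition~\ref{prop:bound} together with injectivity of $\Upsilon$, and match it against the dimension of the corresponding subspace $\CE^\mu_{k,d}\subset\CE^+$. The paper also invokes the PBW-type count on the elliptic side (Lemma~5.6 of \cite{BS}) rather than proving linear independence of the products directly in $\CA^+$, so your closing remark about transporting independence back through $\Upsilon$ is the right move; the $\ph_\rho$ route you sketch for independence is unnecessary and would be harder.

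However, there is a genuine gap at the step you flag as routine. You write that each $P_{k_i,d_i}$ has slope $\le d_i/k_i$ because this is ``the defining minimality/slope property of these elements,'' and you gesture toward Section~\ref{sec:imp}. This is circular: at this point in the paper $P_{k,d}$ is \emph{defined} as $\Upsilon(u_{k,d})$, and the slope bound is precisely what must be established; Section~\ref{sec:imp} and the minimality characterization come logically after Proposition~\ref{prop:surj} and rely on it. In the paper's proof this slope bound is the entire content of the argument and is proved by induction on $k$: one picks an empty triangle with vertices $(0,0),(k_2,d_2),(k,d)$, writes $P_{k,d}$ (up to same-slope products) as a constant times $[P_{k_1,d_1},P_{k_2,d_2}]$, and then observes that although the naive estimate from Proposition~\ref{prop:algebra} only gives slope $\le d_1/k_1 > d/k$, the unique top-order term (coming from $i=k_1$, $i'=0$ in the limit \eqref{eqn:generalclaim}) appears identically in both $P_{k_1,d_1}*P_{k_2,d_2}$ and $P_{k_2,d_2}*P_{k_1,d_1}$ and therefore cancels in the commutator. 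The emptiness of the triangle is what guarantees that no other $(i,i')$ violates the bound. Without this cancellation argument your proposal does not establish $\Upsilon(\CE^\mu_{k,d})\subset\CA^\mu_{k,d}$, and the dimension count cannot close.
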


\begin{proof} Let $\CE_{k,d} \subset \CE^+$ denote the subspace of elements of bidegrees $(k,d)$, and for any slope $\mu \in \BR$ consider:
$$
\CE^\mu_{k,d} = \left\{\textrm{sums of products of }u_{k',d'}\textrm{ for } \frac {d'}{k'} \leq \mu \right\} \subset \CE_{k,d}
$$
By Lemma 5.6 of \cite{BS}, the dimension of the above vector space precisely equals the number of tuples as in \eqref{eqn:steve}. Indeed, this comes about because this dimension equals the number of convex paths in $\textbf{Conv}^+$ of slope $\leq \mu$ (in the notation of \emph{loc. cit.}), and such paths are in 1-1 correspondence with unordered collections $\{(k_1,d_1),...,(k_t,d_t)\}$ of slope $\leq \mu$ which sum up to $(k,d)$. By Proposition \ref{prop:bound}, the finite-dimensional subspace $\CA^\mu_{k,d}$ has dimension at most equal to the same number. Therefore, the desired surjectivity would follow from the claim:
$$
\Upsilon(\CE^\mu_{k,d}) \subset \CA^\mu_{k,d}
$$
(the injectivity of $\Upsilon$ is contained in Theorem \ref{thm:sv}). By Proposition \ref{prop:algebra}, it is enough to show that:
\begin{equation}
\label{eqn:defp}
P_{k,d} := \Upsilon(u_{k,d})
\end{equation}
has slope $\leq \frac dk$. We will prove this statement by induction on $k$. It is trivial for $k=1$, because $P_{1,d} = z_1^d$ by the definition of $\Upsilon$. Assume the claim true for all $k'<k$, and let us prove it for $P_{k,d}$. Take an empty triangle with vertices $(0,0), (k_2,d_2),(k,d)$, which exists simply by choosing one of minimal area. Relation \eqref{eqn:relation} and the fact that $\Upsilon$ is a morphism imply that:
\begin{equation}
\label{eqn:smaug}
P_{k,d} = \frac {\alpha_1}{\alpha_{\gcd(k,d)}} [P_{k_1,d_1},P_{k_2,d_2}] + \left( \textrm{products of }P_{k'd'}\textrm{ with }\frac {k'}{d'} = \frac kd \right)
\end{equation}
The fact that the second summand has slope $\leq \frac dk$ follows from the induction hypothesis. Then our claim is equivalent to showing that the commutator $[P_{k_1,d_1},P_{k_2,d_2}]$ has slope $\leq \frac dk$. The induction hypothesis tells us that $P_{k_1,d_1}$ and $P_{k_2,d_2}$ have slopes $\leq \frac {d_1}{k_1}$ and $\frac {d_2}{k_2}$, respectively, and therefore by Proposition \ref{prop:algebra} their product has slope: 
$$
\leq \max \left( \frac {d_1}{k_1},\frac {d_2}{k_2} \right) = \frac {d_1}{k_1}
$$ 
Since $\frac dk < \frac {d_1}{k_1}$, the above estimate is not good enough. We will finesse the inequality by using \eqref{eqn:generalclaim}. The shuffle elements $P_{k_1,d_1}*P_{k_2,d_2}$ and $P_{k_2,d_2} * P_{k_1,d_1}$ are rational functions in $k=k_1+k_2$ variables. As we multiply any $j\leq k$ of these variables by $\xi \longrightarrow \infty$, the resulting term is of order:
$$
\max_{i+i'=j} \left \lfloor \frac {d_1i}{k_1} \right \rfloor +  \left \lfloor \frac {d_2i'}{k_2} \right \rfloor 
$$
in $\xi$. If this expression were $\leq   \frac {dj}k  $, we would be done with proving that the commutator of \eqref{eqn:smaug} has slope $\leq \frac dk$. Since there are no lattice points inside the quasi-empty triangle, the only case when this inequality fails to hold is when $i=k_1$ and $i'=0$. By \eqref{eqn:generalclaim}, the term which arises in this way is:
$$
\xi^{d_1} P_{k_1,d_1}(z_1,...,z_{k_1}) \cdot P_{k_2,d_2}(z_{k_1+1},...,z_{k_1+k_2}) + O\left(\xi^{\frac {k_1d}k} \right)
$$
However, the leading order term above appears in both $P_{k_1,d_1}*P_{k_2,d_2}$ and $P_{k_2,d_2} * P_{k_1,d_1}$, and therefore drops out in their commutator. We conclude that this term does not appear in \eqref{eqn:smaug}, hence $P_{k,d}$ has slope $\leq \frac dk$. 

\end{proof}

\section{The Double Shuffle Algebra}
\label{sec:double}

\subsection{} Given a bialgebra \footnote{In all our bialgebras, the coproduct $\Delta$ is coassociative and compatible with the product $*$, in the sense that $\Delta(a*b) = \Delta(a)*\Delta(b)$} $A$, a symmetric non-degenerate pairing:
$$
(\cdot,\cdot):A \otimes A \longrightarrow \BC
$$
such that:
\begin{equation}
\label{eqn:bialg}
(a*b,c) = (a\otimes b,\Delta(c)) \qquad \forall a,b,c \in A
\end{equation}
is called a \textbf{bialgebra pairing}. To such a datum, one can associate the Drinfeld double of the bialgebra $A$ (see, for example \cite{D}). To define it, recall the Sweedler notation for the coproduct:
$$
\Delta(a) = a_{1} \otimes a_{2}
$$
where the RHS implicitly contains a sum over several terms. The \textbf{Drinfeld double} of the bialgebra $A$ with the pairing \eqref{eqn:bialg} is $\DD A = A^{\textrm{coop}} \otimes A$ as a vector space, with the property that $A^- = A^{\textrm{coop}} \otimes 1$ and $A^+ = 1 \otimes A$ are both sub-bialgebras of $\DD A$, and we impose the extra relation:
$$
a_{1}^- * b_{2}^+ \cdot (a_{2},b_{1}) = b_{1}^+ * a_{2}^- \cdot (b_{2},a_{1}) \qquad \forall a,b \in A
$$
where $a^- = a \otimes 1$ and $b^+ = 1 \otimes b$. This latter condition teaches us how to commute elements from the two factors of the Drinfeld double, and it uniquely determines the bialgebra structure on $\DD A$. \\

\subsection{}

There is no coproduct of interest on the shuffle algebra $\CA^+$, but we will find one on a slightly larger algebra. Let $\CA^\geq$ be generated by $\CA^+$ and commuting elements $h_0,h_1,...$ under the relation:
\begin{equation}
\label{eqn:relhp}
P(z_1,...,z_k) * h(w) = h(w) * \left[ P(z_1,...,z_k) \prod_{i=1}^k \Omega \left( \frac w{z_i} \right) \right]
\end{equation}
where $h(w) = \sum_{n \geq 0} h_nw^{-n}$ and: 
\begin{equation}
\label{eqn:defomega}
\Omega(x) := \frac {\omega(1/x)}{\omega(x)}=\frac {(x-q^{-1})(x-q_1)(x-q_2)}{(x-q)(x-q_1^{-1})(x-q_2^{-1})} = \exp \left(- \sum_{n\geq 1} \alpha_n x^{-n} \right)
\end{equation}
One makes sense of relation \eqref{eqn:relhp} by expanding the RHS in negative powers of $w$. The reason for introducing these new generators is to define the coproduct. \\

\begin{proposition}
\label{prop:copcheck} 

The following assignments give rise to a coproduct on $\CA^\geq$:
$$
\Delta(h(w)) = h(w) \otimes h(w),
$$
\begin{equation}
\label{eqn:coproduct}
\Delta(P(z_1,...,z_k)) = \sum_{i=0}^k \frac {\prod_{b>i} h(z_{b}) \cdot P(z_1,...,z_i \otimes z_{i+1},...,z_k)}{\prod_{a\leq i < b} \omega(z_b/z_a)}
\end{equation}

\end{proposition}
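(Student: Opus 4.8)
The plan is to read \eqref{eqn:coproduct} as the definition of an algebra morphism $\Delta\colon\CA^{\geq}\to\CA^{\geq}\widehat{\otimes}\CA^{\geq}$ valued in a suitable completion of the tensor square (the completion being forced already at $k=1$, since $\Delta(z_1^d)=z_1^d\otimes 1+\sum_{n\geq 0}h_n\otimes z_1^{d-n}$ is an infinite sum with degrees bounded above in the second slot), and then to verify the defining properties: that the right-hand sides actually lie in $\CA^{\geq}\widehat{\otimes}\CA^{\geq}$, that $\Delta$ is multiplicative for the shuffle product, and that it is coassociative. The counit is $\e(h_0)=1$, $\e(h_n)=0$ for $n\geq 1$, and $\e(P)=0$ for $P\in\CA_k$ with $k\geq 1$; compatibility with \eqref{eqn:coproduct} is immediate and I will not dwell on it.

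First I would address well-definedness. One interprets each Cartan current $h(z_b)=\sum_{n\geq 0}h_nz_b^{-n}$ as contributing $h_n$ to the first tensor slot and the monomial $z_b^{-n}$ to the second, and one expands each factor $1/\omega(z_b/z_a)=\frac{(z_b-q_1z_a)(z_b-q_2z_a)}{(z_b-z_a)(z_b-qz_a)}$ geometrically, with the second-slot variable $z_b$ small against the first-slot variable $z_a$. In the $i$-th summand of \eqref{eqn:coproduct} the variables $z_1,\dots,z_i$ then live in the first tensor factor and $z_{i+1},\dots,z_k$ in the second, and one checks directly from the form \eqref{eqn:shuf}--\eqref{eqn:wheel} that the coefficient of each monomial in the cross-variables is a pair of genuine shuffle elements: the double zeros $\prod(z_a-z_b)^2$ and the simple poles $\prod(z_a-q_1z_b)(z_a-q_2z_b)$ of $P$ distribute correctly between the two groups of variables, and the wheel conditions for the two pieces are inherited from those of $P$ together with the extra zeros produced by the $1/\omega$ expansion. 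Since the sum over $i$ is finite, each $\Delta(P)$ lies in $\CA^{\geq}\widehat{\otimes}\CA^{\geq}$; for the $h$-generators $\Delta(h(w))=h(w)\otimes h(w)$ is manifestly of this form.

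The technical heart is multiplicativity, which splits into: $\Delta(P\ast P')=\Delta(P)\ast\Delta(P')$ for $P,P'\in\CA^+$; compatibility of $\Delta$ with the mixed relation \eqref{eqn:relhp}; and multiplicativity on the $h$'s alone (clear). For the first, one expands $P\ast P'$ by \eqref{eqn:mult} and then applies \eqref{eqn:coproduct}, obtaining a sum over pairs (a shuffle interleaving $P$'s $k$ variables with $P'$'s $k'$ variables, a cut point $i$ of the resulting $k+k'$ variables); on the other side one applies \eqref{eqn:coproduct} to each factor and multiplies the two tensor slots using \eqref{eqn:mult}, obtaining a sum over quadruples (a cut $i$ of $P$'s variables, a cut $i'$ of $P'$'s, a shuffle of the two "left" blocks, a shuffle of the two "right" blocks). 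The two sums are identified by the bijection recording the induced interleaving of all $k+k'$ variables; the one delicate point is the $\omega$-bookkeeping, namely that the factor $\prod_{1\leq a\leq k<b\leq k+k'}\omega(z_a/z_b)$ produced by the shuffle product of $P$ and $P'$ recombines with the $1/\omega$ denominators of \eqref{eqn:coproduct} so as to reproduce exactly the $1/\omega$ denominators attached to the combined element, the surviving $\omega$'s being precisely those straddling the two tensor slots. For \eqref{eqn:relhp} one applies $\Delta$ to both sides and uses $\Delta(h(w))=h(w)\otimes h(w)$ together with the identity $\Omega(x)=\omega(1/x)/\omega(x)$ of \eqref{eqn:defomega}; both sides collapse to the same expression, and it is here that the precise placement of the factors $\omega(z_b/z_a)$ in the denominator of \eqref{eqn:coproduct} is pinned down.

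Finally, coassociativity: one computes that both $(\Delta\otimes\id)\Delta(P)$ and $(\id\otimes\Delta)\Delta(P)$ equal the "triple-splitting" expression obtained by choosing $0\leq i\leq j\leq k$, placing $z_1,\dots,z_i$, $z_{i+1},\dots,z_j$, $z_{j+1},\dots,z_k$ in the three tensor factors, attaching the Cartan currents $\prod_{c>j}h(z_c)$ and $\prod_{i<b\leq j}h(z_b)$ to the first and second factors respectively, and dividing by $\prod_{a\leq i<b}\omega(z_b/z_a)\prod_{a\leq j<c}\omega(z_c/z_a)$; the $\omega$-denominators match on the two sides precisely because $\Delta(h(w))=h(w)\otimes h(w)$ carries each Cartan current untouched through the nested application, and for fixed $P$ this is a finite verification. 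The main obstacle is the multiplicativity identity $\Delta(P\ast P')=\Delta(P)\ast\Delta(P')$: it is a genuine combinatorial identity relating two different ways of organizing shuffles and coproduct-splittings of the variables, and the tracking of the $\omega$-factors is intricate; this computation, together with the detailed verification that each summand of \eqref{eqn:coproduct} lies in $\CA^{\geq}\widehat{\otimes}\CA^{\geq}$, is what I would carry out in the Appendix.
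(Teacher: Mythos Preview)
Your outline is essentially the paper's: coassociativity via a triple-splitting formula, multiplicativity $\Delta(P*P')=\Delta(P)*\Delta(P')$ by expanding both sides over shuffles and cut points, and compatibility with \eqref{eqn:relhp} as a routine check. Two points need correction.

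First, the Cartan currents in your triple-splitting formula are misplaced. Iterating \eqref{eqn:coproduct} together with $\Delta(h(w))=h(w)\otimes h(w)$ forces the first tensor factor to carry $\prod_{b>i}h(z_b)$ (that is, \emph{all} of $h(z_{i+1}),\dots,h(z_k)$, not only those with index $>j$) and the second tensor factor to carry $\prod_{c>j}h(z_c)$, not $\prod_{i<b\leq j}h(z_b)$; the denominator must correspondingly contain all three blocks $\prod_{a\leq i<b\leq j}\omega(z_b/z_a)\prod_{a\leq i,\,c>j}\omega(z_c/z_a)\prod_{i<b\leq j,\,c>j}\omega(z_c/z_b)$. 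This is exactly what the paper records.

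Second, and more importantly, your sketch of multiplicativity treats the matching of $\omega$-factors as purely combinatorial bookkeeping, but the key mechanism is relation \eqref{eqn:relhp} itself. When one forms $\Delta(P)*\Delta(P')$, the first tensor slot contains products of the shape $h(z_{A_2})P(z_{A_1})*h(z_{B_2})P'(z_{B_1})$, with the Cartan currents $h(z_{B_2})$ sitting to the \emph{right} of the shuffle element $P(z_{A_1})$. To compare with $\Delta(P*P')$, where all Cartan currents appear on the far left, one must commute $h(z_{B_2})$ past $P(z_{A_1})$ via \eqref{eqn:relhp}, and this introduces the factors $\prod\Omega(z_{B_2}/z_{A_1})=\prod\omega(z_{A_1}/z_{B_2})/\omega(z_{B_2}/z_{A_1})$. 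It is precisely these extra ratios that close the $\omega$-count; without invoking this commutation the two sides do not match. The paper makes this step explicit, and your write-up should as well.
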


The meaning of the above RHS is that we expand the fraction in non-negative powers of $z_a/z_b$ for $a\leq i < b$, thus obtaining an infinite sum of monomials. In each of those monomials, we put all the $h_n$'s to the very left of the expression, then all powers of $z_1,...,z_i$ to the left of the $\otimes$ sign, and finally all powers of $z_{i+1},...,z_k$ to the right of the $\otimes$ sign, as in the following example:
$$
\Delta(P) = ... + h_{n_{i+1}}...h_{n_k} \cdot z_1^{c_1}...z_i^{c_i} \otimes z_{i+1}^{c_{i+1}}...z_k^{c_k} + ...
$$
We obtain an expression which is a power series in $z_a$ for $a\leq i$ and in $z_b^{-1}$ for $b>i$, so the above tensor product takes values in a completion of $\CA^\geq \otimes \CA^\geq$. Proposition \ref{prop:copcheck} will be proved in the Appendix. \\

\subsection{}

The bialgebra $\CA^\geq$ has a pairing, defined by:
\begin{equation}
\label{eqn:pair0}
(h(v),h(w^{-1})) = \Omega \left( \frac {w}{v} \right)
\end{equation}

\begin{equation}
\label{eqn:pair}
\left( P , P' \right) = \frac {1} {\alpha_1^k} :\int: \frac {P(u_1,...,u_k) P'\left(\frac 1{u_1},...,\frac 1{u_k} \right)}{\prod_{1\leq i \neq j \leq k} \omega(u_i / u_j)} Du_1...Du_k \qquad  
\end{equation}
for all $P,P'\in \CA_{k,d}$. In the above, we set $Du = \frac {du}{2\pi i u}$ and $:\int:$ denotes the \textbf{normal-ordered integral}. We define it by:
$$
\left(\sym \left[ z_1^{n_1}...z_k^{n_k} \prod_{1\leq i < j \leq k} \omega(z_i/z_j) \right], P \right) =
$$

\begin{equation}
\label{eqn:normal}
= \frac {1} {\alpha_1^k} \int_{|u_1| \ll |u_2| \ll ... \ll |u_k|} \frac {u_1^{n_1}...u_k^{n_k}  P\left(\frac 1{u_1},...,\frac 1{u_k} \right)}{\prod_{i>j} \omega(u_i/u_j)} Du_1...Du_k
\end{equation}
for all $n_1,...,n_k \in \BZ$ such that $d=n_1+...+n_k$. By Proposition \ref{prop:surj}, this is enough to define the pairing on the whole shuffle algebra $\CA^+$, since any shuffle element equals a linear combination of products $z^{n_1} * ... * z^{n_k}$. However, there might exist linear relations between these products, so we need to check that the pairing is unambiguously defined. The following Proposition will be proved in the Appendix: \\

\begin{proposition}
\label{prop:well}

The above induces a well-defined bialgebra pairing: 
$$\CA^\geq  \otimes \CA^\geq \longrightarrow \kk,$$
in the sense of \eqref{eqn:bialg}. \\

\end{proposition}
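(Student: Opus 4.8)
The plan is to check, in order, that the assignment \eqref{eqn:pair0}--\eqref{eqn:normal} is \textbf{unambiguous}, \textbf{symmetric}, \textbf{non-degenerate}, and compatible with the coproduct in the sense of \eqref{eqn:bialg}; the last property is the substantive one and I will devote most effort to it.

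\emph{Well-definedness.} A shuffle element of $\CA_{k,d}$ can be written in many ways as a $\kk$-linear combination of products $z^{n_1}*\cdots*z^{n_k}$, so one must see that the value dictated by \eqref{eqn:normal} does not depend on the chosen expansion. Since the multiplication rule \eqref{eqn:mult} gives $z^{n_1}*\cdots*z^{n_k}=\sym\big[z_1^{n_1}\cdots z_k^{n_k}\prod_{i<j}\omega(z_i/z_j)\big]$, it is enough to show that substituting $P=\sym\big[z_1^{n_1}\cdots z_k^{n_k}\prod_{i<j}\omega(z_i/z_j)\big]$ into \eqref{eqn:pair} reproduces the right-hand side of \eqref{eqn:normal}, where $:\int:$ in \eqref{eqn:pair} is read as the operation of expanding the integrand in the region $|u_1|\ll\cdots\ll|u_k|$ and extracting the coefficient of $u_1^0\cdots u_k^0$ — an operation that manifestly depends only on the rational function $P$. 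Unwinding the symmetrization and using the elementary identity
$$
\frac{\prod_{i<j}\omega(u_{\sigma(i)}/u_{\sigma(j)})}{\prod_{i\neq j}\omega(u_i/u_j)}\ =\ \prod_{i<j}\frac{1}{\omega(u_{\sigma(j)}/u_{\sigma(i)})},
$$
the summand labelled by $\sigma\in S(k)$ becomes an integral of the same shape but in the $\sigma$-permuted region; deforming that region back to the standard one only crosses poles of the integrand along $u_i/u_j\in\{q_1,q_2\}$, and the residues there vanish precisely because $P'$ obeys the wheel conditions \eqref{eqn:wheel}. Hence all $k!$ summands agree with \eqref{eqn:normal}, which is therefore well defined; the same vanishing shows the prescribed expansion of the integrand of \eqref{eqn:pair} makes sense, so the pairing takes finite values.

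\emph{Symmetry and non-degeneracy.} Symmetry on $\CA^+$ comes from the substitution $u_i\mapsto u_{k+1-i}^{-1}$ in \eqref{eqn:pair}: it preserves the region $|u_1|\ll\cdots\ll|u_k|$ and, up to a sign one checks to be $+1$, the measure $\prod_i Du_i$; it leaves $\prod_{i\neq j}\omega(u_i/u_j)$ invariant since reciprocating every ratio merely permutes the set of ordered pairs; and it carries $P(u)P'(u^{-1})$ to $P'(u)P(u^{-1})$ because $P,P'$ are symmetric. On the $h$-block symmetry is built into \eqref{eqn:pair0} together with $\Omega(x^{-1})=\Omega(x)^{-1}$, which follows from \eqref{eqn:defomega}. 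For non-degeneracy, the pairing respects the bigrading, so it suffices to work on each finite-dimensional graded piece; ordering the products $z^{n_1}*\cdots*z^{n_k}$ by the partition-and-slope data underlying the filtration of Proposition \ref{prop:bound}, the Gram matrix of the pairing is block-triangular with diagonal blocks of determinant a nonzero multiple of a product of the $\alpha_n$ of \eqref{eqn:alpha}, hence invertible; combined with the equality of $\dim\CA^\mu_{k,d}$ with the count of Propositions \ref{prop:bound} and \ref{prop:surj}, this yields non-degeneracy, which then passes to $\CA^\geq$ through \eqref{eqn:pair0}.

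\emph{Compatibility with the coproduct.} By bilinearity, \eqref{eqn:bialg} need only be checked for $a,b,c$ in a $\kk$-spanning set, and since $\Delta$ is a coassociative algebra map, a short induction on the length of a word in the generators $z^l,h_m$ reduces the identity to the cases in which $a$ is a single generator. The cases involving an $h_m$ — whether as $a$ itself, or pulled out of $b$ via \eqref{eqn:relhp} — are handled formally using $\Delta(h(w))=h(w)\otimes h(w)$, the $\Omega$-factors matching the $\omega$-denominators of \eqref{eqn:coproduct} through $\Omega(x)=\omega(1/x)/\omega(x)$. The essential case is $a=z^l$ with $b,c\in\CA^+$: expand $z^l*b=\sum_{t=1}^{k}u_t^l\,b(u_1,\dots,\widehat{u}_t,\dots,u_k)\prod_{j\neq t}\omega(u_t/u_j)$ by \eqref{eqn:mult} and $\Delta c$ by \eqref{eqn:coproduct}. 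Because $b$ is a pure shuffle element and $h_n$ pairs with $\CA^+$ only through $h_0=1$, on the right-hand side only the $i=1$ summand of $\Delta c$ survives and its factor $\prod_{b>1}h(z_b)$ collapses to $1$, leaving the pairing of $z^l$ against the $z_1$-part times the pairing of $b$ against the $(z_2,\dots,z_k)$-part of $c(z_1\otimes z_2,\dots,z_k)\big/\prod_{b>1}\omega(z_b/z_1)$. Matching this term by term against the contour-integral expression of the left-hand side is where the work lies, and I expect it to be the main obstacle: one must show that the nesting prescription of the normal-ordered integral on the left is exactly accounted for by the denominator $\prod_{a\leq 1<b}\omega(z_b/z_a)$ in \eqref{eqn:coproduct}, so that the spurious residues arising when the two contour conventions are reconciled cancel by the wheel conditions on $c$.
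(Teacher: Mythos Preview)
Your proposal diverges from the paper's proof in its overall architecture, and two of the steps contain genuine gaps.

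\textbf{Well-definedness.} You attempt to show directly that the normal-ordered integral depends only on the rational function $P$, by reinterpreting $:\!\int\!:$ as integration over the fixed region $|u_1|\ll\cdots\ll|u_k|$ and then deforming each $\sigma$-permuted region back to the standard one. The key claim --- that the residues encountered vanish ``precisely because $P'$ obeys the wheel conditions'' --- does not hold as stated. The wheel conditions \eqref{eqn:wheel} impose vanishing when \emph{three} variables are specialized simultaneously to a wheel configuration; a single residue at $u_i/u_j\in\{q_1,q_2\}$ involves only two variables and has no reason to vanish. (There is also a pole at $u_i/u_j=q$ coming from the zero of $\omega$ in the denominator, which you do not mention.) Moreover, if every $\sigma$-summand really reproduced the right-hand side of \eqref{eqn:normal}, the total would be $k!$ times too large, since \eqref{eqn:pair} carries no $1/k!$. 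The paper avoids all of this by a different route: it first proves \emph{symmetry} of the pairing when \emph{both} entries are of the form $z^{n_1}*\cdots*z^{n_k}$, via a change of variables and a contour shift that is harmless because the relevant $\omega$-ratios are simply absent for the pairs being swapped; then it combines symmetry with the trivial non-degeneracy in the second variable to conclude that the definition \eqref{eqn:normal} is unambiguous in the first variable. This bootstrap is the main idea you are missing.

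\textbf{The bialgebra property.} You reduce to $a=z^l$ a single generator and then concede that matching the contour conventions ``is where the work lies, and I expect it to be the main obstacle'' --- so this part is not proved. The paper's reduction is different and makes the verification essentially automatic: it checks \eqref{eqn:bialg} with $a=z^{n_1}*\cdots*z^{n_k}$ (a full product of degree-one generators, not a single one) against arbitrary $c\in\CA^+$. In that case the left-hand side is literally the integral \eqref{eqn:normal} over $|u_1|\ll\cdots\ll|u_k|$; splitting the variables into $\{u_1,\dots,u_i\}$ and $\{u_{i+1},\dots,u_k\}$ and expanding the cross $\omega$-factors in the small ratios $u_a/u_b$ is \emph{exactly} the coproduct formula \eqref{eqn:coproduct}, so the identity is immediate. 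Reducing instead to a single $z^l$ forces you to pair against a general shuffle element $b$ in the second slot, which is why you run into difficulty.

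\textbf{Minor points.} Your symmetry argument via $u_i\mapsto u_{k+1-i}^{-1}$ is a nice shortcut, but it only applies once $:\!\int\!:$ has been shown to coincide with an honest integral over the region --- which is precisely the point at issue. The non-degeneracy discussion via Gram matrices is more than the paper proves here; the paper only uses the obvious non-degeneracy in the second variable, while full non-degeneracy follows later from Proposition~\ref{prop:ortho}.
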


\subsection{} We let $\CA = \DD \CA^\geq$ be the Drinfeld double of the shuffle algebra with respect to the above bialgebra pairing, and call $\CA$ the \textbf{double shuffle algebra}. On the other hand, we consider the algebra $\CE^\geq$ generated by $\CE^+$, a central element $c$, \footnote{Note that the abstract algebra studied in \cite{BS} has one more central element $c'$, but we will not need it in our applications and so we set it equal to 1. The results of the present paper hold just as well were $c'$ added, but some formulas would be messier} and commuting elements $u_{0,1},u_{0,2},...$, under the relations:
\begin{equation}
\label{eqn:relsasha}
[u_{0,d},u_{1,d'}] = u_{1,d+d'} \qquad \forall d \in \BZ, d'>0
\end{equation}
This is a bialgebra, with coproduct given by:
$$
\Delta(u_{0,d}) = u_{0,d} \otimes 1 + 1 \otimes u_{0,d}, \qquad \Delta(u_{1,d}) = u_{1,d} \otimes 1 + c \sum_{n\geq 0} \theta_{0,n} \otimes u_{1,d-n}
$$
for all $d \in \BZ$, where the $\theta_{0,n}$ are obtained from the $u_{0,n}$ according to \eqref{eqn:exponential}. There is a bialgebra pairing on $\CE^\geq$ thus defined, given by:
\begin{equation}
\label{eqn:pairhall}
(u_{0,d},u_{0,d}) = \frac {1}{\alpha_d}, \qquad (u_{1,d},u_{1,d}) = \frac {1}{\alpha_1}
\end{equation}
The Drinfeld double $\CE = \DD \CE^\geq$ is the elliptic Hall algebra studied in \cite{BS}, \cite{SV}. \\

%\footnote{Instead of considering $\theta_{0,0}=1$, as would follow from \eqref{eqn:exponential}, we allow it to be a central element of our algebra. To be pedantic, this would require us to multiply the exponential in the RHS of \eqref{eqn:exponential} by $\theta_{0,0}$, when $a=0$. The alteration is trivial, so we will not emphasize it further. This central element $\theta_{0,0}$ will be matched with $h_0 \in \CA^0$ by Theorem \ref{thm:iso} below}

\begin{theorem}
\label{thm:iso}

The injective morphism of Theorem \ref{thm:sv} extends to an isomorphism $\Upsilon:\CE^\geq \longrightarrow \CA^\geq$ via: 
$$
c \longrightarrow h_0 \qquad \text{ and } \qquad u_{0,d} \longrightarrow p_d, \quad \forall \ d>0
$$
where $p_1,p_2,...\in \CA^0$ are obtained from the series: 
$$
h(w)  = h_0 \cdot \exp \left( \sum_{n=1}^\infty  \alpha_n  p_n w^{-n} \right)
$$
This extended isomorphism $\Upsilon$ preserves the coproduct and the bialgebra pairing, and hence induces an isomorphism (denoted by the same letter):
$$
\Upsilon:\CE \longrightarrow \CA
$$ 
of their Drinfeld doubles. \\

\end{theorem}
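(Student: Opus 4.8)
The plan is to verify, in order, that $\Upsilon$ is a well-defined algebra homomorphism $\CE^\geq\to\CA^\geq$, that it is bijective, that it intertwines the two coproducts, that it intertwines the two bialgebra pairings, and then to invoke the universal property of the Drinfeld double. For well-definedness, the relations internal to $\CE^+$ are respected by Theorem \ref{thm:sv}; the centrality of $c$ maps to that of $h_0$ (take the $w^0$-coefficient of \eqref{eqn:relhp}); and the $u_{0,d}$ map to the $p_n$, which lie in the commutative subalgebra generated by $h_0^{\pm 1},h_1,\dots$, hence commute. The one substantive point is relation \eqref{eqn:relsasha}, i.e.\ the identity $[p_m,z_1^d] = z_1^{m+d}$ in $\CA^\geq$. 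Writing $h(w) = h_0\exp H(w)$ with $H(w) = \sum_{n\geq 1}\alpha_n p_n w^{-n}$, relation \eqref{eqn:relhp} with $P = z_1^d$ says that conjugation by $h(w)$ multiplies $z_1^d$ by $\Omega(w/z_1) = \exp\!\big(-\sum_n\alpha_n z_1^n w^{-n}\big)$; iterating, conjugation by $h(w)^t$ multiplies it by $\Omega(w/z_1)^t$ for every $t\in\BZ_{>0}$. Since both sides of $\exp(-tH(w))*z_1^d*\exp(tH(w)) = z_1^d\,\Omega(w/z_1)^t$ are, in each fixed degree in $w^{-1}$, polynomial in $t$, equating linear terms gives $[H(w),z_1^d] = \sum_n\alpha_n z_1^{d+n}w^{-n}$, and the coefficient of $w^{-m}$ is exactly $[p_m,z_1^d] = z_1^{d+m}$.

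Bijectivity follows from the structure of the two algebras: $\CE^\geq$ is, by construction, the smash product of $\CE^+$ with $\kk[c^{\pm 1},u_{0,1},u_{0,2},\dots]$, and $\CA^\geq$ is the smash product of $\CA^+$ with $\kk[h_0^{\pm 1},h_1,h_2,\dots]$ (relation \eqref{eqn:relhp} encoding the action); on the first factor $\Upsilon$ is the isomorphism $\CE^+\cong\CA^+$ of Proposition \ref{prop:surj}, and on the second it is the invertible triangular change of variables $c\mapsto h_0$, $u_{0,n}\mapsto p_n$, with inverse $h_n\mapsto c\,\theta_{0,n}$. For the coproduct it suffices, since $\Delta$ is an algebra map on both sides, to match the generators: $c$ and $h_0$ are grouplike (from $\Delta h(w) = h(w)\otimes h(w)$), the $p_n$ are primitive (divide $\Delta h(w) = h(w)\otimes h(w)$ by $\Delta h_0 = h_0\otimes h_0$ and take logarithms), matching $\Delta u_{0,n}$; and the $k=1$ case of \eqref{eqn:coproduct} gives $\Delta(z_1^d) = z_1^d\otimes 1 + \sum_{n\geq 0}h_n\otimes z_1^{d-n}$, which is $(\Upsilon\otimes\Upsilon)\Delta(u_{1,d})$ because $\Upsilon(c\,\theta_{0,n}) = h_n$.

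For the pairing, note that $(\cdot,\cdot)_{\CA^\geq}\circ(\Upsilon\otimes\Upsilon)$ and $(\cdot,\cdot)_{\CE^\geq}$ are both symmetric bialgebra pairings on $\CE^\geq$ — here we use that $\Upsilon$ already preserves product and coproduct — so they coincide once they agree on pairs of algebra generators. The value $(z_1^d,z_1^{d'}) = \delta_{d,d'}/\alpha_1$ is immediate from \eqref{eqn:normal}, whose integrand is $u^{d-d'}\,Du$; $(h_0,h_0) = 1 = \Omega(0)$; $(p_m,p_{m'}) = \delta_{m,m'}/\alpha_m$ is extracted from $(h(v),h(w^{-1})) = \Omega(w/v)$ together with $h(w) = h_0\exp(\sum\alpha_n p_n w^{-n})$ and the multiplicativity of the pairing on the grouplike $h$'s; and the remaining cross- or distinct-bidegree pairings vanish on both sides by the bigrading. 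These are exactly the values \eqref{eqn:pairhall}. Finally, an isomorphism of bialgebras taking one bialgebra pairing to the other induces an isomorphism of Drinfeld doubles, since the cross-relation $a_1^-*b_2^+\cdot(a_2,b_1) = b_1^+*a_2^-\cdot(b_2,a_1)$ defining $\DD A$ involves only the product, coproduct and pairing; applying this to $\Upsilon\colon\CE^\geq\cong\CA^\geq$ yields $\Upsilon\colon\CE = \DD\CE^\geq\cong\DD\CA^\geq = \CA$.

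The main obstacle is matching the two pairings: on the shuffle side the pairing is the normal-ordered contour integral \eqref{eqn:pair}--\eqref{eqn:normal}, whereas on the Hall side it is prescribed on generators by \eqref{eqn:pairhall} and propagated through \eqref{eqn:exponential}, so reconciling them requires, besides the easy integral computation of $(z_1^d,z_1^{d'})$, a careful unwinding of $(h(v),h(w^{-1})) = \Omega(w/v)$ into $(p_m,p_{m'}) = \delta_{m,m'}/\alpha_m$, and a justification of the ``determined on generators'' reduction in the completed setting where the coproduct and pairing involve infinite sums. A secondary subtlety is making rigorous the formal-parameter conjugation argument behind $[p_m,z_1^d] = z_1^{m+d}$.
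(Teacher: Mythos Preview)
Your proposal is correct and follows essentially the same outline as the paper, which in fact leaves the full argument as an exercise and only records the key identity \eqref{eqn:hecke} (obtained, in the paper's words, ``by taking the logarithm of \eqref{eqn:relhp}'') together with the remark that it suffices to check the coproduct and pairing on the generators $u_{1,d}\mapsto z_1^d$. Your formal-parameter-$t$ conjugation argument is exactly a rigorous implementation of that ``take the logarithm'' step, and your explicit checks on $c$, $u_{0,d}$, $u_{1,d}$ for the coproduct and pairing are precisely the generator-level verifications the paper alludes to but does not write out.
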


\subsection{} Let us give a few details about the proof of the above Theorem, although proving it will be left as an exercise. At the level of $\CE^+ \hookrightarrow \CE^\geq$, the fact that $\Upsilon$ is an isomorphism follows from Theorem \ref{thm:sv} and Proposition \ref{prop:surj}. We need to upgrade this isomorphism to the whole of $\CE^\geq$, i.e. to check that it matches relation \eqref{eqn:relhp} with relation \eqref{eqn:relsasha}. This follows easily from the following equation, obtained by taking the logarithm of \eqref{eqn:relhp}:
\begin{equation}
\label{eqn:hecke}
[p_n , P(z_1,...,z_k)] = P(z_1,...,z_k)(z_1^n+...+z_k^n) \quad \forall P \in \CA^+
\end{equation}
In particular, commuting shuffle elements with the $p_n$ gives rise to the action of the ring of symmetric functions on $\CA^+$ by Hecke operators that was described in \cite{SV}. When $n=d$ and $k=1$, \eqref{eqn:hecke} matches with \eqref{eqn:relsasha}, hence $\Upsilon:\CE^\geq \longrightarrow \CA^\geq$ is an algebra morphism. It is straightforward to check that it preserves the coproduct and pairing, as it is enough to prove it at the level of the generators $u_{1,d} \longrightarrow z_1^d$. \\ 

\subsection{} 
\label{sub:slz}

The Drinfeld double $\CE$ is generated by symbols $u_{k,d}$ for $(k,d)\in \BZ^2 \backslash 0$, where: 
$$u_{k,d} = u_{k,d}^+, \qquad u_{-k,-d} = u_{k,d}^-, \qquad \forall k>0 \text{ or } k=0,d>0$$
If we set $h_0=1$, then as described in \cite{BS} any element:
$$
\gamma = \left( \begin{array}{cc}
x & y \\
z & w \end{array} \right) \in \slz
$$ 
gives rise to an automorphism:
$$
g_\gamma:\CE|_{h_0=1} \longrightarrow \CE|_{h_0=1}
$$
by permuting the generators $u_{k,d} \longrightarrow u_{xk+yd,zk+wd}$. These morphisms have the property that $g_\gamma\circ g_{\gamma'} = g_{\gamma\gamma'}$ and $g_1=\textrm{Id}$, so they give an action of $\slz$ on the algebra $\CE|_{h_0=1}$. If we restore the formal parameter $h_0$, then the universal cover of $\slz$ is the one that acts on $\CE$, as described in \cite{BS}. \\

\section{Describing $P_{k,d}$ implicitly}
\label{sec:imp}

\subsection{} 
\label{sub:b}

Recall the subalgebra $\CA^\mu \subset \CA^+$ that was introduced in Proposition \ref{prop:algebra}. From the definition of the coproduct $\Delta$, we infer the following: \\

\begin{proposition}
\label{prop:radu} 

For all $P\in \CA_{k,d}^\mu$, we have:
\begin{equation}
\label{eqn:radu}
\Delta(P) = \sum_{i=0}^{k} h_0^{k-i} \lim_{\xi \longrightarrow \infty} \frac {P( z_{j \leq i} \otimes \xi \cdot z_{j>i})}{\xi^{\mu (k-i)}} + (\emph{anything}) \otimes (\emph{slope} < \mu) \qquad 
\end{equation}
Recall that the tensor product inside the rational function $P$ means that all powers of $z_{j\leq i}$ go to the left of the $\otimes$ sign, while all powers of $z_{j>i}$ go to the right thereof. \\

\end{proposition}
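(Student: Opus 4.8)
The plan is to read \eqref{eqn:radu} off directly from the defining formula \eqref{eqn:coproduct} for $\Delta$, by isolating in each of its $k+1$ summands the contribution of top degree in the block of variables $z_{i+1},\dots,z_k$ that are placed to the right of the tensor sign.

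Fix $P\in\CA^\mu_{k,d}$ and $i\in\{0,\dots,k\}$, and consider the $i$-th summand
$$
S_i=\frac{\prod_{b>i}h(z_b)\cdot P(z_1,\dots,z_i\otimes z_{i+1},\dots,z_k)}{\prod_{a\le i<b}\omega(z_b/z_a)},
$$
expanded, as prescribed after \eqref{eqn:coproduct}, in non-negative powers of $z_a/z_b$ for $a\le i<b$. First I would record the three elementary expansions that enter. (i) Since $h(z_b)=h_0+\sum_{n\ge1}h_n z_b^{-n}$, every factor $h(z_b)$ contributes $h_0$ --- which moves to the far left and does not change the $z_{>i}$-degree of the right factor --- plus terms that lower that degree by $n\ge1$. (ii) Since $\omega(\infty)=1$, one has $\omega(z_b/z_a)^{-1}=1+\sum_{m\ge1}c_m z_a^m z_b^{-m}$, which contributes $1$ plus terms that lower the $z_{>i}$-degree of the right factor by $m\ge1$ (while only raising the degree in the left variable $z_a$). (iii) Because $P$ is symmetric and has slope $\le\mu$, the slope condition \eqref{eqn:limit} applied to the trailing $k-i$ variables $z_{i+1},\dots,z_k$ shows that $\lim_{\xi\to\infty}\xi^{-\mu(k-i)}P(z_1,\dots,z_i\otimes\xi z_{i+1},\dots,\xi z_k)$ exists and is finite; writing $P=\sum_e P^{(e)}$ for the decomposition into parts homogeneous of degree $e$ in $z_{i+1},\dots,z_k$, this forces $P^{(e)}=0$ for $e>\mu(k-i)$ and identifies $P^{(\mu(k-i))}$ with that limit.

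Multiplying the three expansions, I would then argue that the right tensor factor has degree at most $\mu(k-i)$ in $z_{i+1},\dots,z_k$, with equality only for the family of monomials obtained by choosing $h_0$ from every $h(z_b)$, the constant term $1$ from every $\omega(z_b/z_a)^{-1}$, and the top piece $P^{(\mu(k-i))}$ from $P$. Their sum is $h_0^{k-i}P^{(\mu(k-i))}$, and since $P^{(\mu(k-i))}$ is already in split form --- all $z_{\le i}$-powers on the left, all $z_{>i}$-powers on the right --- and the $h$'s stand to the far left, this equals
$$
h_0^{k-i}\lim_{\xi\to\infty}\frac{P(z_1,\dots,z_i\otimes\xi z_{i+1},\dots,\xi z_k)}{\xi^{\mu(k-i)}},
$$
the $i$-th term of the main sum in \eqref{eqn:radu}. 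Any other monomial of $S_i$ uses at least one degree-lowering term from (i) or (ii), or a sub-top part of $P$ in (iii), so its right tensor factor has strictly smaller degree in $z_{i+1},\dots,z_k$, hence slope $<\mu$, and it contributes to $(\emph{anything})\otimes(\emph{slope}<\mu)$. Summing over $i$ then gives \eqref{eqn:radu}, the summands not mixing since the left factor of $S_i$ involves only $z_1,\dots,z_i$ and the right factor only $z_{i+1},\dots,z_k$.

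I expect the main obstacle to be step (iii): one must be careful that the hypothesis ``slope $\le\mu$'', which in \eqref{eqn:limit} constrains the rescaling of an \emph{initial} block $z_1,\dots,z_j$, equally controls the rescaling of the \emph{trailing} block $z_{i+1},\dots,z_k$ --- which is exactly where the symmetry of shuffle elements is used. A secondary point, needed to express the error as $(\emph{anything})\otimes(\emph{slope}<\mu)$ rather than merely in terms of degrees, is that every right tensor factor appearing in $\Delta$ of an element of $\CA^\mu$ again has slope $\le\mu$; this follows from Proposition \ref{prop:algebra} and the multiplicativity of $\Delta$, the base case $k=1$ --- where $\Delta(z_1^d)=z_1^d\otimes1+\sum_{n\ge0}h_n\otimes z_1^{d-n}$ --- being immediate. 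Everything else is routine bookkeeping with the degree count and with the conventions for placing the $h_n$'s.
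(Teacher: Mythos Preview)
Your argument is correct and is precisely the direct verification the paper has in mind when it writes ``From the definition of the coproduct $\Delta$, we infer the following'' --- the paper does not give a separate proof, so there is nothing to compare against beyond that remark.

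One small simplification: your ``secondary point'' is unnecessary. In the statement, ``slope $<\mu$'' for the right tensor factor just means that it lives in bidegree $(k',d')$ with $d'/k'<\mu$; it is a statement about the bigrading, not about the refined partial-rescaling condition of \eqref{eqn:limit}. Your degree count in steps (i)--(iii) already shows that every non-leading monomial has its right tensor factor in some $\CA_{k-i,e}$ with $e<\mu(k-i)$, which is exactly this. So you do not need the inductive argument via multiplicativity of $\Delta$ and Proposition~\ref{prop:algebra}; the bookkeeping you have done is already sufficient.
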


In particular, it is easy to see that:
\begin{equation}
\label{eqn:radu2}
\Delta_\mu(P) = \sum_{i=0}^{k} h_0^{k-i} \lim_{\xi \longrightarrow \infty} \frac {P(z_{j \leq i} \otimes \xi \cdot  z_{j>i})}{\xi^{\mu (k-i)}}
\end{equation}
is a coproduct on the subalgebra:
\begin{equation}
\label{eqn:b}
\CB^\mu = \bigoplus_{k\geq 0}^{d=\mu k} \CA^\mu_{k,d} \subset \CA^\mu \subset \CA^+
\end{equation}
which should be interpreted as the leading term of $\Delta$ to order $\mu$. An easy consequence of Proposition \ref{prop:radu}, and the fact that the pairing $(\cdot,\cdot)$ preserves the grading, is the following: \\

\begin{proposition}

The pairing \eqref{eqn:pair} on $\CB^\mu$ satisfies the bialgebra property with respect to the shuffle product and the coproduct $\Delta_\mu$. \\

\end{proposition}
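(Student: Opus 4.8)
The plan is to obtain the identity \eqref{eqn:bialg} for $(\CB^\mu,*,\Delta_\mu)$ from the same identity for the \emph{full} coproduct $\Delta$ on $\CA^\geq$ (Proposition \ref{prop:well}), by checking that replacing $\Delta$ with its leading term $\Delta_\mu$ changes nothing once one pairs against elements of $\CB^\mu$. Since $*$ is bigraded and the pairing \eqref{eqn:pair} respects the bigrading, it suffices to verify $(a*b,c)=(a\otimes b,\Delta_\mu(c))$ for bihomogeneous $a\in\CA^\mu_{k_1,\mu k_1}$, $b\in\CA^\mu_{k_2,\mu k_2}$, $c\in\CA^\mu_{k,\mu k}$, and both sides vanish unless $k=k_1+k_2$. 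If $k_1=0$ or $k_2=0$, the $k=0$ part of $\CB^\mu$ is just $\kk$ and the identity reduces to the counit property of $\Delta_\mu$, so we may assume $k_1,k_2\geq1$.

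First I would invoke Proposition \ref{prop:well} to get $(a*b,c)=(a\otimes b,\Delta(c))$. Then, by Proposition \ref{prop:radu} together with \eqref{eqn:radu2}, one may write $\Delta(c)=\Delta_\mu(c)+E$, where every bihomogeneous component of $E$ has its \emph{second} tensor factor of slope $<\mu$: such a component lying in $\CA_{k',\bullet}\otimes\CA_{k'',\delta''}$ satisfies $\delta''\leq\mu'k''<\mu k''$ for some $\mu'<\mu$ (scale all $k''$ variables of the second factor as in \eqref{eqn:limit} and use homogeneity). It therefore suffices to prove $(a\otimes b,E)=0$. A bihomogeneous component of $E$ can pair nontrivially with $a\otimes b$ only if its second tensor factor lies in $\CA_{k_2,\mu k_2}$, since the pairing vanishes between distinct bidegrees; but this is forbidden by the previous estimate because $k_2\geq1$. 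Hence $(a\otimes b,E)=0$ and $(a*b,c)=(a\otimes b,\Delta_\mu(c))$. Combined with the facts already recorded — that $\Delta_\mu$ is a coassociative coproduct on $\CB^\mu$ compatible with $*$, and that \eqref{eqn:pair} restricts to a symmetric pairing on $\CB^\mu$ — this is the proposition. The auxiliary generators are absorbed into the same accounting: the $h_0$-powers appearing in $\Delta_\mu$ pair with $a,b\in\CA^+$ through the counit (equivalently, may be set to $1$), while the $h_n$ with $n\geq1$ occur only inside $E$ and are killed by the estimate on the second tensor leg.

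The only step with genuine content is Proposition \ref{prop:radu} — the precise assertion that the error $\Delta-\Delta_\mu$ always has second leg of \emph{strictly} smaller slope — which is proved in the Appendix and assumed here; everything downstream is bookkeeping with the bigrading. That bookkeeping is clean precisely because $\CB^\mu$ is built from the exact-slope-$\mu$ pieces $\CA^\mu_{k,\mu k}$, so that for these pieces ``slope $<\mu$ in $k_2$ variables'' is literally ``homogeneous degree $<\mu k_2$'', a bidegree mismatch the pairing cannot see. I do not anticipate a real obstacle; the only point that needs a little care is making sure the extra $h_n$ do not interfere, and, as indicated above, they do not.
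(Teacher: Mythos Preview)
Your argument is correct and is exactly the approach the paper indicates: the paper does not spell out a proof but merely says the proposition ``is an easy consequence of Proposition \ref{prop:radu}, and the fact that the pairing $(\cdot,\cdot)$ preserves the grading'', and you have unpacked precisely that. One small inaccuracy: Proposition \ref{prop:radu} is not proved in the Appendix --- the paper treats it as immediate from the definition \eqref{eqn:coproduct} of $\Delta$ --- but this does not affect your argument.
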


\subsection{} 

In \eqref{eqn:defp}, $P_{k,d}\in \CA^+$ has been defined as the image of the generator $u_{k,d} \in \CE^+$ under the isomorphism $\Upsilon$ of Theorem \ref{thm:iso}. Our main Theorem \ref{thm:main} requires us to give an explicit formula for this element. Before we do so, we will give an implicit description of $P_{k,d}$. \\

\begin{lemma} 
\label{lem:imp}

Up to a constant multiple, $P_{k,d}$ is the unique element of $\CA_{k,d}$ such that:
\begin{equation}
\label{eqn:gangnam}
\Delta(P_{k,d}) = P_{k,d} \otimes 1 + h_0^k \otimes P_{k,d} + (\emph{anything}) \otimes \left(\emph{slope} < \frac dk \right) \in \CA^\geq \otimes \CA^\geq
\end{equation}
Moreover, if $\gcd(k,d)=1$ and $(0,0)$, $(k_2,d_2)$, $(k,d)$ is a quasi-empty triangle in the sense of Subsection \ref{sub:semi}, then the component of $\Delta(P_{k,d})$ in $\CA_{k_1,d_1} \otimes \CA_{k_2,d_2}$ equals:
$$
\frac {h_0^{k_2}Q_{k_1,d_1} \otimes Q_{k_2,d_2}}{\alpha_1}
$$
where the $Q_{k,d}$ are computed from the $P_{k,d}$ by the following relation:
\begin{equation}
\label{eqn:shi}
1 + \sum_{n=1}^{\infty} x^n Q_{na,nb}  = \exp \left( \sum_{n=1}^\infty  \alpha_n  x^n P_{na,nb} \right), \qquad \textrm{whenever } \gcd(a,b)=1
\end{equation}

\end{lemma}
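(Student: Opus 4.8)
The plan is to transport both statements to the elliptic Hall algebra along the bialgebra isomorphism $\Upsilon$ of Theorem \ref{thm:iso}: there $\Upsilon(u_{k,d})=P_{k,d}$, $\Upsilon(c)=h_0$, and comparing the definition \eqref{eqn:exponential} of the $\theta_{na,nb}$ with \eqref{eqn:shi} shows $\Upsilon(\theta_{na,nb})=Q_{na,nb}$; thus it is enough to prove the assertions with the Hall relations \eqref{eqn:relation}, \eqref{eqn:smaug} freely used on the shuffle side. I would then induct on $k$, the case $k=1$ being immediate from $\Delta(z_1^d)=z_1^d\otimes 1+\sum_{n\geq 0}h_n\otimes z_1^{d-n}$.

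\emph{First assertion.} The key is that it is equivalent to minimality of $P_{k,d}$ in the sense of Remark \ref{rem:minimal}. Since $P_{k,d}$ has slope $\leq\frac dk$ by Proposition \ref{prop:surj}, Proposition \ref{prop:radu} gives
$$\Delta(P_{k,d})=\sum_{i=0}^{k}h_0^{k-i}\lim_{\xi\to\infty}\frac{P_{k,d}(z_{j\leq i}\otimes\xi z_{j>i})}{\xi^{\frac dk(k-i)}}+(\textrm{anything})\otimes\left(\textrm{slope}<\frac dk\right),$$
whose $i=k$ term is $P_{k,d}\otimes 1$, whose $i=0$ term is $h_0^k\otimes P_{k,d}$ ($P_{k,d}$ being homogeneous of degree $d$), and whose terms $0<i<k$ vanish — by symmetry of $P_{k,d}$ — exactly when $P_{k,d}$ is minimal. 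Conversely, any element of $\CA_{k,d}$ satisfying \eqref{eqn:gangnam} must have slope exactly $\frac dk$ (a smaller slope is impossible in $\CA_{k,d}$ by the proof of Proposition \ref{prop:bound}; a larger one would, via Proposition \ref{prop:radu}, force a right tensor factor of slope $>\frac dk$ into $\Delta$), hence is minimal, so Remark \ref{rem:minimal} yields the uniqueness up to a scalar. When $\gcd(k,d)=1$ minimality is automatic: for $0<i<k$ the exponent $\tfrac{d(k-i)}{k}$ is not an integer, so the slope bound makes the corresponding limit vanish and \eqref{eqn:gangnam} follows with no further work. When $n_0:=\gcd(k,d)>1$ this is genuine content, which I would extract from the commutator presentation: choosing an empty triangle $(0,0),(k_2,d_2),(k,d)$ of minimal area (so $\gcd(k_i,d_i)=1$, $k_i<k$), relation \eqref{eqn:relation} gives $Q_{k,d}=\alpha_1[P_{k_1,d_1},P_{k_2,d_2}]$, and by induction $\Delta(P_{k_i,d_i})=G_i+R_i$ with $G_i=P_{k_i,d_i}\otimes 1+h_0^{k_i}\otimes P_{k_i,d_i}$ and $R_i\in(\textrm{anything})\otimes(\textrm{slope}<\tfrac{d_i}{k_i})$. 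Because $h_0$ is central (from \eqref{eqn:relhp}, as $\Omega$ has constant term $1$), expanding the componentwise product yields the clean identity $[G_1,G_2]=\tfrac1{\alpha_1}(Q_{k,d}\otimes 1+h_0^k\otimes Q_{k,d})$, all mixed terms cancelling; combining this with the a priori containment $\Delta(Q_{k,d})\in(\textrm{anything})\otimes(\textrm{slope}\leq\tfrac dk)$ and with the fact that on a fixed ray the $Q$'s are group-like and the $u$'s primitive modulo strictly smaller slope (available from \cite{BS}, or propagated from the $\gcd=1$ case through \eqref{eqn:exponential} by induction on $n_0$), one deduces \eqref{eqn:gangnam} for $P_{k,d}$.

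\emph{Second assertion.} Here $\gcd(k,d)=1$ and, for a quasi-empty triangle $(0,0),(k_2,d_2),(k,d)$, one has $d_2/k_2<d/k<d_1/k_1$ and $P_{k,d}=[P_{k_1,d_1},P_{k_2,d_2}]$ by \eqref{eqn:relation}. I would read off the $\CA_{k_1,d_1}\otimes\CA_{k_2,d_2}$-component of $\Delta([P_{k_1,d_1},P_{k_2,d_2}])=[\Delta(P_{k_1,d_1}),\Delta(P_{k_2,d_2})]$ by a bidegree count: writing each $\Delta(P_{k_i,d_i})$ as $P_{k_i,d_i}\otimes 1+h_0^{k_i}\otimes P_{k_i,d_i}+(\textrm{lower slope})$ (the first assertion, for $k_i<k$), conservation of bidegree together with the absence — forced by quasi-emptiness — of lattice points strictly inside the triangle and on one of its edges leaves only the contributions that, on each tensor leg, run over all ray-subdivisions of $(k_1,d_1)$ respectively $(k_2,d_2)$; resumming these via \eqref{eqn:exponential} (equivalently \eqref{eqn:shi}) collapses the leg of bidegree $(k_i,d_i)$ into $Q_{k_i,d_i}$, with central prefactor $h_0^{k_2}$ and overall scalar $1/\alpha_1$ inherited from $[P_{k_1,d_1},P_{k_2,d_2}]=Q_{k,d}/\alpha_1$. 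This gives the stated $\frac{h_0^{k_2}Q_{k_1,d_1}\otimes Q_{k_2,d_2}}{\alpha_1}$.

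\emph{Where the difficulty lies.} The $\gcd(k,d)=1$ case of the first assertion is soft (Propositions \ref{prop:surj}, \ref{prop:radu}, \ref{prop:bound} and a divisibility remark), and the a priori slope bound is exactly what lets one avoid estimating the cross terms $[G_i,R_j]$ by hand: a bracket such as $[P_{k_1,d_1},b]$ with $b$ of slope $<d_2/k_2$ but $P_{k_1,d_1}$ of slope $d_1/k_1>d/k$ carries only the naive bound $d_1/k_1$. The two steps I expect to be genuinely delicate are (i) the $\gcd(k,d)>1$ case of the first assertion, which ultimately rests on recognizing the slope-$\frac dk$ subalgebra as (a twist of) the Hopf algebra of symmetric functions with the $u_{na,nb}$ primitive — the point where \cite{BS} is really used; and (ii) the telescoping in the second assertion, i.e. checking that the surviving bidegree contributions reassemble exactly into $Q_{k_1,d_1}\otimes Q_{k_2,d_2}$ with the correct power of $h_0$ and the correct scalar $1/\alpha_1$.
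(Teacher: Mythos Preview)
Your overall strategy matches the paper's: reduce uniqueness to Remark~\ref{rem:minimal} via Proposition~\ref{prop:radu}, and prove existence by induction on $k$ through the commutator relation \eqref{eqn:relation}. Your observation that the $\gcd(k,d)=1$ case of \eqref{eqn:gangnam} is automatic (because the intermediate exponents $\tfrac{d(k-i)}k$ are non-integers, while the order of growth of $P$ in $\xi$ is integral) is a clean shortcut the paper does not make explicit.

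Where your proposal is weaker than the paper is in the $\gcd>1$ case of the first assertion. You compute $[G_1,G_2]$ correctly, invoke the a~priori bound $\Delta(Q_{k,d})\in(\text{anything})\otimes(\text{slope}\leq\tfrac dk)$, and then appeal to \cite{BS} (or a vaguely described propagation) for group-likeness of the $Q$'s along the ray. The paper instead proves \emph{both} assertions by a single joint induction: to handle the cross terms at slope exactly $\tfrac dk$, it uses the second assertion inductively for $P_{k_1,d_1}$ and $P_{k_2,d_2}$ (both of which have $\gcd=1$, since the triangle is empty) to write down all slope-$\tfrac dk$ pieces of $\Delta(P_{k_i,d_i})$ explicitly as $\sum h_0^{\bullet}P_{\bullet}\otimes Q_{\bullet}$. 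These then recombine, via \eqref{eqn:relation} applied to the smaller triangles, into $\sum_{0\leq x\leq k}h_0^{x}Q_{k-x,d-y}\otimes Q_{x,y}$, i.e.\ the group-like formula for $\Delta_{d/k}(Q_{k,d})$. So the primitivity of $P_{na,nb}$ is \emph{produced} by the induction rather than imported; your sketch elides precisely this feedback loop.

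For the second assertion your bidegree-count-plus-resumming is in the right spirit but underspecified. The paper does not work with the \emph{given} quasi-empty triangle: it writes $(k_2,d_2)=(na,nb)$ with $\gcd(a,b)=1$, expresses $P_{k,d}=[P_{k-a,d-b},P_{a,b}]$ via the \emph{empty} triangle $(0,0),(a,b),(k,d)$, and reads off the $\CA_{k_1,d_1}\otimes\CA_{k_2,d_2}$-component from the single explicit sum $\sum_x h_0^{xa}P_{k-xa,d-xb}\otimes Q_{xa,xb}$ (again supplied by the joint induction hypothesis). This sidesteps the telescoping you flag as delicate.
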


\begin{proof} Any shuffle element $P$ which satisfies \eqref{eqn:gangnam} has, by \eqref{eqn:radu}, the property that the limits:
$$
\lim_{\xi \longrightarrow \infty} \frac {P( \xi z_1,..., \xi z_i, z_{i+1},..., z_k)}{\xi^{ \mu i }}
$$
vanish for all $i\in \{1,...,k-1\}$. By Remark \eqref{rem:minimal}, the space of such shuffle elements is at most one-dimensional, hence the uniqueness of $P$. Now we need to show that $P_{k,d} = \Upsilon(u_{k,d})$ satisfies both the above conditions, which we will do by induction on $k$. The case $k=1$ is obvious, since $P_{1,d} = z_1^d$. Then let us assume the lemma is true for all $i<k$ and pick an empty triangle $(0,0)$, $(k_2,d_2)$, $(k,d)$. Such a triangle always exists, simply by picking one of minimal area. Since $\Upsilon$ preserves the relations inside $\CE$, we have the following equality in $\CA$:
\begin{equation}
\label{eqn:relationx}
Q_{k,d} = \alpha_1 [P_{k_1,d_1},P_{k_2,d_2}]
\end{equation}
where $(k_1,d_1) = (k,d) - (k_2,d_2)$. The induction hypothesis implies that:
$$
\Delta(P_{k_1,d_1}) = P_{k_1,d_1} \otimes 1 + h_0^{k_1} \otimes P_{k_1,d_1} + \sum_{dx=ky}^{0 < x < k_1} h_0^x P_{k_1-x,d_1-y} \otimes Q_{x,y} + ... 
$$

$$
\Delta(P_{k_2,d_2}) = P_{k_2,d_2} \otimes 1 + h_0^{k_2} \otimes P_{k_2,d_2} + \sum_{dx=ky}^{0<x<k_2} h_0^{k_2-x} Q_{x,y} \otimes P_{k_2-x,d_2-y} + ...
$$
The ellipsis denotes terms whose second tensor factor has smaller slope. Since $h_0$ is central, taking the commutator of these expressions implies:
$$
\Delta(Q_{k,d}) = Q_{k,d}\otimes 1 + h_0^k \otimes Q_{k,d} + \alpha_1 \sum_{dx=ky}^{0 < x < k_1}  h_0^x [P_{k_1-x,d_1-y},P_{k_2,d_2}] \otimes Q_{x,y} +
$$

$$
+ \alpha_1  \sum_{dx=ky}^{0 < x < k_2} h_0^{k-x} Q_{x,y} \otimes [P_{k_1,d_1},P_{k_2-x,d_2-y}]+ (\text{anything}) \otimes \left(\text{slope} < \frac dk \right)
$$
Applying relation \eqref{eqn:relation} gives us:
$$
\Delta(Q_{k,d}) = Q_{k,d}\otimes 1 + h_0^k \otimes Q_{k,d} + \sum_{dx=ky}^{0 < x < k_1}  h_0^xQ_{k-x,d-y} \otimes Q_{x,y} + \sum_{dx=ky}^{0<x<k_2} h_0^{k-x} Q_{x,y} \otimes Q_{k-x,d-y} 
$$

$$
+ (\text{anything}) \otimes \left(\text{slope} < \frac dk \right) = \sum_{dx=ky}^{0 \leq x \leq k}  h_0^{x} Q_{k-x,d-y} \otimes Q_{x,y} + (\text{anything}) \otimes \left(\text{slope} < \frac dk \right) 
$$
Let $(k,d)=(na,nb)$ for $\gcd(a,b)=1$. The above argument shows that $Q_{k,d}$ lies in the subalgebra $\CB^{\frac ba}$ of \eqref{eqn:b}. Therefore, we can analyse these elements with the coproduct $\Delta_{b/a}$, which is obtained from $\Delta$ by removing all terms of slope $<b/a = d/k$ in the second tensor factor. The above implies:
$$
\Delta_{b/a}(Q_{na,nb}) = \sum_{x=0}^n h_0^{xa} Q_{(n-x)a,(n-x)b} \otimes Q_{xa,xb}
$$
Elements whose coproduct satisfies the above property are called group-like, and it is well-known that they are exponents of primitive elements \footnote{The usual definition of group-like (respectively, primitive) element is $x$ such that $\Delta(x) = x \otimes x$ (respectively $\Delta(x) = x \otimes 1 + 1 \otimes x$). In our setup, we shall slightly change this definition to account for the central element $h_0$, by requiring that $\Delta(x) = h_0^{\text{deg}(x)} x \otimes x$ (respectively $\Delta(x) = h_0^{\text{deg}(x)} \otimes x + x \otimes 1$). All results which we shall use remain valid in this slightly changed context}. By this we mean that, on general grounds, the elements $P_{na,nb}$ defined by relation \eqref{eqn:shi} are primitive:
\begin{equation}
\label{eqn:primitive}
\Delta_{b/a}(P_{na,nb}) = P_{na,nb} \otimes 1 + h_0^{na} \otimes P_{na,nb}
\end{equation}
Looking back at the definition of $\Delta_{b/a}$, this precisely implies that $P_{k,d}=P_{na,nb}$ satisfies property \eqref{eqn:gangnam}, which proves the first claim in the lemma. To prove the second claim, take such a quasi-empty triangle and assume that:
$$
\gcd(k,d)=\gcd(k_1,d_1)=1 \qquad \text{and}\qquad \gcd(k_2,d_2) \geq 1
$$
(the other case is taken care of similarly). In this case, we write $(k_2,d_2)=(na,nb)$ for $\gcd(a,b)=1$ and consider the empty triangle with vertices $(0,0), (a,b), (k,d)$. Relation \eqref{eqn:relation} implies:
$$
P_{k,d} = [P_{k-a,d-b},P_{a,b}] \Longrightarrow \Delta(P_{k,d}) = [\Delta(P_{k-a,d-b}), \Delta(P_{a,b})] =
$$

$$
=\left[ \sum_{x=0}^{\left \lfloor \frac ka-1 \right \rfloor} h_0^{xa} P_{k-(x+1)a,d-(x+1)b} \otimes Q_{xa,xb}, P_{a,b} \otimes 1 \right]+... =  
$$

$$
=\sum_{x=0}^{\left \lfloor \frac ka-1 \right \rfloor} h_0^{xa} P_{k-xa,d-xb} \otimes Q_{xa,xb} + (\text{anything}) \otimes \left( \text{slope} < \frac {d_2}{k_2} \right)
$$
The only term with second tensor factor of bidegrees $(k_2,d_2)$ in the above is precisely $h_0^{k_2} P_{k_1,d_1} \otimes Q_{k_2,d_2}$. Since by assumption $\gcd(k_1,d_1)=1$, we have $Q_{k_1,d_1} = \alpha_1 P_{k_1,d_1}$, hence the claim follows.

\end{proof}

\subsection{}
\label{sub:ortho}

According to \cite{BS}, a basis of $\CA_{k,d}$ as a vector space consists of expressions:
$$
P_C = P_{k_1,d_1}...P_{k_t,d_t}
$$
over all collections $C$ of integers such that $k_1+...+k_t = k$, $d_1+...+d_t = d$ and $\frac {d_1}{k_1} \leq ... \leq \frac {d_t}{k_t}$. If a certain number of these ratios are equal, then we order the corresponding $P_{k_i,d_i}$ in increasing order of $k_i$. In the terminology of \cite{BS}, such a collection is determined by a convex path in the lattice $\BZ^2$. It turns out the above basis is orthogonal for the scalar product \eqref{eqn:pair}. \\

\begin{proposition}
\label{prop:ortho}

For all collections $C = \{(k_1,d_1),...,(k_t,d_t) \}$ and $C' = \{(k_1',d_1'),...,(k_s',d_s')\}$ ordered as above, we have:
\begin{equation}
\label{eqn:ortho}
(P_C, P_{C'}) =  \delta_C^{C'}  \prod_{i=1}^t \frac {1}{ \alpha_{\gcd(k_i,d_i)}}
\end{equation}

\end{proposition}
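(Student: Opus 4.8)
The plan is to prove orthogonality of the convex-path basis by combining two ingredients: the coproduct formula for the $P_{k,d}$ from Lemma \ref{lem:imp}, and the defining compatibility \eqref{eqn:bialg} of the bialgebra pairing with the coproduct. The key structural point is that $(P_C,P_{C'})$ can be expanded using \eqref{eqn:bialg} by writing $P_C = P_{k_1,d_1}* \cdots * P_{k_t,d_t}$ and moving the factors one at a time onto the second argument: $(P_C, P_{C'}) = (P_{k_1,d_1}\otimes \cdots \otimes P_{k_t,d_t}, \Delta^{(t-1)}(P_{C'}))$. Since the pairing respects the bigrading, only the component of $\Delta^{(t-1)}(P_{C'})$ lying in $\CA_{k_1,d_1}\otimes \cdots \otimes \CA_{k_t,d_t}$ contributes. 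Now if the collections $C,C'$ are both ordered so that slopes are nondecreasing, I would argue that the only way $\Delta^{(t-1)}(P_{C'})$ has a nonzero component in the prescribed multidegrees — given the slope restrictions in \eqref{eqn:gangnam}, which say that the second tensor factor of $\Delta(P_{k_i',d_i'})$ has slope $\le d_i'/k_i'$ with the leading term $h_0^{k_i'}\otimes P_{k_i',d_i'}$ — is if $C' = C$ and the iterated coproduct picks out, from each $\Delta(P_{k_i,d_i})$, exactly the leading primitive-type terms. This is where the ordering by slope (and the tie-breaking by $k_i$) is essential: it forces a unique "staircase" matching of multidegrees.

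First I would set up the case $t=s=1$, $C=\{(k,d)\}$, $C'=\{(k',d')\}$ with $(k,d)=(k',d')$, and reduce the claim to $(P_{k,d},P_{k,d}) = 1/\alpha_{\gcd(k,d)}$. Writing $(k,d)=(na,nb)$ with $\gcd(a,b)=1$, I would induct on $n$. For $n=1$ one can use the $\slz$-action of Subsection \ref{sub:slz}, which preserves the pairing, to reduce to $(P_{1,0},P_{1,0})$ or $(z_1^{\,0},z_1^{\,0})$, evaluated directly from \eqref{eqn:pairhall} or \eqref{eqn:pair}; this gives $1/\alpha_1$. For $n>1$, I would use relation \eqref{eqn:shi} defining $Q_{na,nb}$ together with the group-like coproduct formula $\Delta_{b/a}(Q_{na,nb})=\sum_x h_0^{xa}Q_{(n-x)a,(n-x)b}\otimes Q_{xa,xb}$ derived in the proof of Lemma \ref{lem:imp}, feed this into \eqref{eqn:bialg}, and extract the scalar $\alpha_n$ by comparing generating series; taking logs converts the $\exp$ in \eqref{eqn:shi} into the additive relation that pins down $(P_{na,nb},P_{na,nb})=1/\alpha_n = 1/\alpha_{\gcd(na,nb)}$.

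Next I would handle general $t$ by induction on $t$ (or on $k$). Using \eqref{eqn:bialg} to split off the last factor $P_{k_t,d_t}$ of $P_C$, I get $(P_C,P_{C'}) = (P_{C\setminus\{(k_t,d_t)\}}\otimes P_{k_t,d_t}, \Delta(P_{C'}))$. Expanding $\Delta(P_{C'}) = \Delta(P_{k_1',d_1'})*\cdots*\Delta(P_{k_s',d_s'})$ and projecting to the bidegree where the second factor is $(k_t,d_t)$: because $(k_t,d_t)$ has the largest slope among the pieces of $C$, and each $\Delta(P_{k_i',d_i'})$ contributes to its second tensor slot something of slope $\le d_i'/k_i'$, a slope count shows the second slot can only receive a full $P_{k_s',d_s'}$ (when $d_s'/k_s' = d_t/k_t$ and in fact $(k_s',d_s')=(k_t,d_t)$), and any contribution of a proper "sub-coproduct" piece $Q_{x,y}$ with $y/x < d_t/k_t$ is too small. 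I would make this precise via the filtration by slope and Proposition \ref{prop:algebra}. This matches the last entries of $C$ and $C'$, splits off the known scalar $1/\alpha_{\gcd(k_t,d_t)}$, and reduces to $(P_{C\setminus\{(k_t,d_t)\}}, P_{C'\setminus\{(k_s',d_s')\}})$, closing the induction; the $\delta_C^{C'}$ emerges because if at any stage the top slopes or the tie-broken $k$'s differ, the relevant bidegree component of the iterated coproduct vanishes.

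The main obstacle I anticipate is the bookkeeping in the general-$t$ step: carefully justifying that in $\Delta(P_{k_1',d_1'})*\cdots*\Delta(P_{k_s',d_s')}$ the only surviving term in the target multidegree is the "diagonal" one, i.e.\ that cross terms mixing a high-slope $Q$ from one factor with low-slope leftovers from others cannot conspire to land in $\CA_{k_1,d_1}\otimes\cdots\otimes\CA_{k_t,d_t}$ with matched slopes. This is essentially a convexity/extremality argument about lattice paths — exactly the combinatorics encoded in "convex path" of \cite{BS} — and the cleanest route is probably to filter everything by total slope profile and observe that the diagonal term is the unique one maximizing the profile lexicographically, all others being strictly smaller in the slope filtration and hence killed upon projecting to the fixed multidegree. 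Handling the equal-slope ties (where one reorders by $k_i$) requires the symmetry of the pairing together with the fact, visible from \eqref{eqn:shi} and the commutativity of $\CB^{b/a}$, that the $P_{na,nb}$ for fixed ratio $b/a$ generate a commutative polynomial-type subalgebra on which the pairing is the standard one for power-sums, so orthogonality within a fixed slope reduces to the classical orthogonality of monomials in power-sum variables.
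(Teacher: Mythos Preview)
Your approach is essentially the same as the paper's: use the bialgebra property \eqref{eqn:bialg} to peel off one factor of $P_C$, then use the slope constraints on $\Delta(P_{k,d})$ from Lemma~\ref{lem:imp} to force the multidegrees to match, and induct. The only structural difference is that you peel off the \emph{last} factor $P_{k_t,d_t}$ (largest slope), whereas the paper peels off the \emph{first} factor $P_{k_1,d_1}$ (smallest slope); the two are mirror images of each other and the bookkeeping is comparable. Your convexity/extremality remarks about equal-slope ties are more elaborate than what the paper writes, but the paper's argument there is terse and what you sketch is a correct way to fill it in.

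One point to flag: for the base case $(P_{k,d},P_{k,d})=1/\alpha_{\gcd(k,d)}$, the paper simply cites Lemma~4.10 of \cite{BS}. Your proposed route for $n=1$ invokes the $SL_2(\BZ)$ action of Subsection~\ref{sub:slz} and asserts that it preserves the pairing; that compatibility is not established in this paper (only the algebra automorphism is stated), so you would either need to prove it or fall back on \cite{BS} as the paper does. Your generating-series argument for $n>1$ via \eqref{eqn:shi} and the group-like formula for $\Delta_{b/a}(Q_{na,nb})$ is fine, and is in fact close to how \cite{BS} proceeds.
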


\begin{proof} We will prove the above claim by induction on $k_1+...+k_t$. The bialgebra property of the pairing implies that the LHS of \eqref{eqn:ortho} equals:
$$
\left(P_{k_1,d_1} \otimes P_{k_2,d_2}...P_{k_t,d_t}, \Delta(P_{k_1',d_1'})...\Delta(P_{k_s',d_s'}) \right)
$$
Lemma \ref{lem:imp} implies that the second tensor factor of $\Delta(P_{k_1',d_1'})...\Delta(P_{k_s',d_s'})$ only exists in degrees $(k,d)$ for $\frac dk \leq \frac {d_1'}{k_1'}$. Since $\Delta$ preserves bidegrees, this is equivalent to saying that the first tensor factor only exists in degrees $(k,d)$ for $\frac dk \geq \frac {d_1'}{k_1'}$. Therefore, if $\frac {d_1'}{k_1'} > \frac {d_1}{k_1}$, this tensor factor pairs trivially with $P_{k_1,d_1}$ and the LHS of \eqref{eqn:ortho} vanishes. The same thing would happen if $\frac {d_1'}{k_1'} < \frac {d_1}{k_1}$ since the pairing is symmetric. So let us assume we are in the last remaining case: 
$$
\frac {d_1}{k_1} = \frac {d_1'}{k_1'}
$$
If $k_1<k_1'$ or $k_1>k_1'$, then the same argument would work to prove that the pairing of \eqref{eqn:ortho} is zero. The only other possibility is that $(k_1,d_1)=(k_1',d_1')$. Then the only term in the first tensor factor of $\Delta(P_{k_1',d_1'})...\Delta(P_{k_s',d_s'})$ that has degree small enough to pair non-trivially with $P_{k_1,d_1}$ is:
$$
(P_{k_1',d_1'} \otimes 1)(h_0^{*} \otimes P_{k_2',d_2'}...P_{k_s',d_s'})
$$
\footnote{We do not care about the power of $h_0$, since it pairs trivially with everything} This implies that the LHS of \eqref{eqn:ortho} equals:
$$
(P_{k_1,d_1},P_{k_1,d_1})\cdot (P_{k_2,d_2} ...P_{k_t,d_t},P_{k_2',d_2'}...P_{k_s',d_s'})
$$
which concludes the proof of \eqref{eqn:ortho} by induction. The base of the induction, namely the fact that $(P_{k,d},P_{k,d}) = \frac {1}{\alpha_{\gcd(k,d)}}$, is proved according to Lemma 4.10 of \cite{BS}. 

\end{proof}

\section{Describing $P_{k,d}$ explicitly}
\label{sec:exp}

\subsection{} 
\label{sub:xandy}

Proposition \ref{prop:surj} implies that all shuffle elements can be written as linear combinations of:
\begin{equation}
\label{eqn:form}
z_1^{m_1} * ... * z_1^{m_k} = \sym \left[ z_1^{m_1}...z_k^{m_k} \prod_{1\leq i<j \leq k} \omega \left( \frac {z_i}{z_j} \right) \right]
\end{equation}
as $m_1,...,m_k \in \BZ$. However, another set of elements \footnote{Which will turn out to span the whole shuffle algebra by linear combinations} is also very important: \\

\begin{proposition}

For all $m_1,...,m_k \in \BZ$, the rational function:
\begin{equation}
\label{eqn:defx}
X_{m_1,...,m_k} = \emph{Sym} \left[ \frac {z_1^{m_1}...z_k^{m_k}}{\left(1 - \frac {qz_2}{z_1}\right)...\left(1 - \frac {qz_{k}}{z_{k-1}}\right)} \prod_{1\leq i<j \leq k} \omega \left( \frac {z_i}{z_j} \right) \right]
\end{equation}
is an element of the shuffle algebra $\CA^+$. \\

\end{proposition}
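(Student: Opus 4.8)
The plan is to check directly that $X_{m_1,\dots,m_k}$ satisfies the two conditions defining $\CA^+$: first, that when written over the common denominator $\prod_{i\neq j}(z_i-q_1z_j)(z_i-q_2z_j)$ the numerator is a \emph{Laurent polynomial} (no spurious poles), and second, that this numerator satisfies the wheel conditions \eqref{eqn:wheel}. Both of these are most naturally verified term by term on the symmetrization, exactly as in the proof of Proposition \ref{prop:0}.

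For the first point, the only poles of a single summand of \eqref{eqn:defx} beyond those allowed by \eqref{eqn:shuf} come from the factors $1 - qz_{a+1}/z_a$, i.e.\ from the locus $z_{a+1} = q z_a$ (after relabelling by the permutation). So I would fix such a factor, say the pole at $z_2 = qz_1$, and pair up the $2$ summands of the $\sym$ that differ by the transposition $(1\,2)$: in one, the dangerous denominator is $1 - qz_2/z_1$; in its partner the corresponding chain of denominators is rearranged, and one uses the functional equation for $\omega$, namely $\omega(x)\omega(1/x) = 1$ together with the fact that $\omega(q) = 0$ (since $\omega(x) = \frac{(x-1)(x-q)}{(x-q_1)(x-q_2)}$ vanishes at $x=q$), to see that the factor $\omega(z_1/z_2)$ multiplying the partner term kills the residue. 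Concretely, the residue of the sum of the two paired summands at $z_2 = qz_1$ vanishes because the $\omega\!\left(\frac{z_1}{z_2}\right)$ appearing in one term specializes to $\omega(q^{-1})$ while the other contributes $\omega(q)=0$ with matching normalization; more robustly, one observes that the full $\sym$ is a symmetric rational function with \emph{at worst simple} poles along $z_i = qz_j$, and a symmetric function cannot have a simple pole along $z_i = qz_j$ alone (it would have to have one along $z_j = qz_i$ too, contradicting the pole structure), so it must in fact be regular there. This is the cleanest route and the one I expect to use.

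For the wheel conditions, I would specialize $\{z_1/z_2, z_2/z_3, z_3/z_1\} = \{q_1, q_2, q^{-1}\}$ and show each summand of $\sym$ vanishes, again mimicking Proposition \ref{prop:0}. The product $\prod_{i<j}\omega(z_i/z_j)$ contains, for the three indices involved, factors whose numerators are $(z_i/z_j - 1)(z_i/z_j - q)$; on the wheel locus two of the three ratios among $\{z_1,z_2,z_3\}$ (in \emph{some} order, depending on the permutation) hit either $q_1$ or $q_2$, which are the \emph{poles} of the other $\omega$ factors, so one must be careful: the vanishing of the numerator of one $\omega$ must beat the pole of another. The key point, already implicit in the very definition \eqref{eqn:shuf}--\eqref{eqn:wheel} of the shuffle algebra, is that the combination $\prod_{i<j}\omega(z_i/z_j)$ times the "honest" denominator $\prod_{i\neq j}(z_i-q_1z_j)(z_i-q_2z_j)$ has a genuine zero of the right order on the wheel; the extra denominators $1-qz_{a+1}/z_a$ are regular on the wheel locus (since $q \notin \{q_1,q_2,q^{-1},1\}$ generically, and in any case $z_{a+1}/z_a = q$ is not forced by the wheel relations) and hence do not interfere. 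So each summand vanishes, and therefore so does the whole $\sym$.

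The main obstacle is the pole-cancellation step: one has to organize the summands of the symmetrization into the right pairs (or orbits) and track how the chain of denominators $\bigl(1 - \tfrac{qz_2}{z_1}\bigr)\cdots\bigl(1 - \tfrac{qz_k}{z_{k-1}}\bigr)$ transforms under a transposition that creates or destroys a $z_{a+1} = qz_a$ pole, then invoke $\omega(q) = 0$ and $\omega(x)\omega(x^{-1}) = 1$ at exactly the right place. I expect the symmetry shortcut ("a symmetric rational function with only simple poles along $z_i/z_j = q$ must be regular there") to let me sidestep most of this bookkeeping, reducing the whole argument to the wheel-condition check, which is then routine.
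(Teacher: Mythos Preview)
Your overall plan—verify that $X_{m_1,\dots,m_k}$ has the shape \eqref{eqn:shuf} and then check the wheel conditions summand by summand—is exactly the paper's strategy, and your wheel-condition argument is essentially the same as the paper's. The trouble is entirely in the pole-cancellation step, where you have overcomplicated a one-line observation and introduced two incorrect claims along the way.

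First, you mislocate the pole: the factor $1 - qz_{a+1}/z_a$ vanishes at $z_a = qz_{a+1}$, not at $z_{a+1} = qz_a$. Once you have the correct location, look at the \emph{same} summand: the product $\prod_{i<j}\omega(z_i/z_j)$ contains the factor $\omega(z_a/z_{a+1})$, whose numerator $(z_a/z_{a+1}-1)(z_a/z_{a+1}-q)$ vanishes precisely at $z_a = qz_{a+1}$. So every individual summand of the $\mathrm{Sym}$ is already regular along these hyperplanes; this is what the paper means by ``those factors of the denominator also appear in the numerator of $\omega$.'' No pairing of summands is required, and no partner term is involved.

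Second, two of your proposed tools are simply wrong. The identity $\omega(x)\omega(1/x)=1$ is false (for instance $\omega(1)=0$). And the ``symmetry shortcut'' does not apply here: different permutations $\sigma$ in the $\mathrm{Sym}$ contribute denominators $(1 - qz_{\sigma(a+1)}/z_{\sigma(a)})$, so a priori the symmetrized function could have simple poles along \emph{both} $z_i = qz_j$ and $z_j = qz_i$, which is perfectly compatible with symmetry. The argument you have in mind works in the proof of Proposition~\ref{prop:0} because there the putative pole is along the \emph{symmetric} divisor $z_i = z_j$; here the divisor $z_i = qz_j$ is not preserved by swapping $i$ and $j$, so symmetry does not force regularity.

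In short: replace your entire first paragraph of argument by the single observation that $\omega(z_a/z_{a+1})$ already cancels $1 - qz_{a+1}/z_a$ in each summand, and the proof goes through.
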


\begin{proof} It is easy to see that the rational function $X_{m_1,...,m_k}$ has the appropriate poles and vanishes when $z_i=z_j$. Since it is symmetric, it must therefore be divisible by $(z_i-z_j)^2$ (note that there are no actual poles when $z_{i-1} = qz_i$, since those factors of the denominator also appear in the numerator of $\omega$). Written in the form \eqref{eqn:wheel}, it is quite easy to see that each summand of the above $\sym$ vanishes when any three of the variables are set to $(1,q_1,q)$ or $(1,q_2,q)$, by an argument akin to the proof of Proposition \ref{prop:0}. Therefore, $X_{m_1,...,m_k}$ is a shuffle element of $\CA^+$. In fact, the denominator: 
$$
\left(1 - \frac {qz_2}{z_1} \right)... \left(1 - \frac {qz_{k}}{z_{k-1}} \right)
$$ 
is maximal \footnote{By maximal we mean that if we added any further linear factors to the denominator, it would cease to satisfy the wheel conditions, and thus fail to be an element of $\CA^+$} so that expressions like the above still satisfy the wheel conditions. Let us remark that the same would hold if we replaced $q$ by $q_1^{-1}$ or $q_2^{-1}$. \\

\end{proof}

\subsection{}

Let us fix a bidegree $(k,d)=(na,nb)$ for $\gcd(a,b)=1$. For any binary string $\e = (\e_1,...,\e_{n-1}) \in \{0,1\}^{n-1}$, consider:
$$
S^{\e}_i = \left\lfloor \frac {id}k \right\rfloor - \e_{i/a}
$$
Naturally, the term $\e_{i/a}$ only appears when $i\in \{a,...,(n-1)a\}$, otherwise we set it equal to 0. \\ %We have: 

%$$
%S(k,d)^{(0,...,0)} = S(k,d) \qquad \qquad S(k,d)^{(1,...,1)} = s(k,d)
%$$
%and the general $S(k,d)^\e$ simply interpolates between these two extremes. As there will be no source of confusion about $k$ and $d$, we will often denote this degree vector simply by $S^\e$. \\

\begin{proposition}
\label{prop:shuffle}

For any vector $\e$ as above, the shuffle element: 
\begin{equation}
\label{eqn:xkd}
X_{k,d}^\e := X_{S^\e_1-S^\e_0,S^\e_2-S^\e_1,...,S^\e_{k-1}-S^\e_{k-2},S^\e_k-S^\e_{k-1}}
\end{equation}
has slope $\leq \frac dk$, or equivalently, lies in $\CB^{\frac dk}$. Moreover, the shuffle element:
\begin{equation}
\label{eqn:sensei}
\sum_{r+s=n-1}^{r,s \geq 0} q^{s} X_{k,d}^{(0^{r}1^{s})}
\end{equation}
is primitive for the coproduct $\Delta_{d/k}$ on $\CB^{\frac dk}$, where $(0^r1^s) = (\underbrace{0,...,0}_{r\emph{ zeroes}},\underbrace{1,...,1}_{s\emph{ ones}})$. \\

\end{proposition}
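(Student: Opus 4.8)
The plan is to prove the two assertions in turn, using the $\varphi_\rho$-evaluation machinery of Proposition \ref{prop:bound} and the leading-term description of $\Delta$ in Proposition \ref{prop:radu}. For the first claim, that $X_{k,d}^\e$ has slope $\leq \frac dk$, I would compute directly the limits $\lim_{\xi\to\infty}\xi^{-\frac{dj}{k}}X_{k,d}^\e(\xi z_1,\dots,\xi z_j,z_{j+1},\dots,z_k)$ appearing in \eqref{eqn:limit}. Because $X_{k,d}^\e$ is built from $X_{m_1,\dots,m_k}$ with exponent-sums of consecutive blocks equal to $S^\e_i = \lfloor id/k\rfloor - \e_{i/a}$, each summand of the defining $\mathrm{Sym}$ behaves, under rescaling of any subset of the variables, like $\xi$ raised to a sum of $S^\e$'s minus contributions from the geometric factors $(1-qz_{i+1}/z_i)^{-1}$ and the $\omega$'s; here one uses that $\omega(z_i/z_j)\to 1$ when $z_i/z_j\to\infty$ and $\to q_1q_2/(q_1q_2)$-type constants otherwise, so that the $\omega$-factors contribute nothing to the order in $\xi$. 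The key arithmetic input is the standard inequality $\lfloor \frac{d i}{k}\rfloor + \lfloor \frac{d i'}{k}\rfloor \leq \lfloor \frac{d(i+i')}{k}\rfloor$ for the floor function, which guarantees that splitting the index set never increases the order beyond $\lfloor dj/k \rfloor \leq dj/k$; the $-\e_{i/a}$ corrections only ever lower the order, so the limits are finite and $X_{k,d}^\e \in \CA_{k,d}^{d/k} = \CB^{\frac dk}$ (using $d = \mu k$ exactly, cf.\ \eqref{eqn:b}).

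For the second claim I would use Proposition \ref{prop:radu}, which identifies $\Delta_{d/k}$ on $\CB^{\frac dk}$ with the sum over $i$ of $h_0^{k-i}$ times the leading term of $P$ under $z_{j\leq i}\otimes \xi z_{j>i}$. Applied to $X_{k,d}^\e$, the $i$-th term of $\Delta_{d/k}(X_{k,d}^\e)$ is a product of two smaller $X$-type shuffle elements — a "break at position $i$" — with an explicit $q$-power prefactor coming from whether the broken geometric factor $(1-qz_{i+1}/z_i)^{-1}$ survives as $1$ or contributes a $q z_{i+1}/z_i$. The point is to track exactly where the break can occur: the factor $(1-qz_{i+1}/z_i)^{-1}$ expanded in nonnegative powers of $z_i/z_{i+1}$ contributes its leading term $1$ to the $i$-left/$i$-right split only when $i$ is a "block boundary" $i \in a\BZ$, and otherwise forces a shift that degrades the slope and lands in the $(\text{anything})\otimes(\text{slope}<\frac dk)$ part, which $\Delta_{d/k}$ discards. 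Thus $\Delta_{d/k}(X_{k,d}^{(0^r1^s)})$ reduces to a sum over block-boundary breaks $i = xa$, each producing (up to a $q$-power recording how many $1$'s lie to the left vs.\ right of the break) a tensor $X^{(0^{r'}1^{s'})}_{\cdot}\otimes X^{(0^{r''}1^{s''})}_{\cdot}$ of the same shape. Summing against the weights $q^s$ in \eqref{eqn:sensei}, the cross terms telescope: the coefficient of each nonprimitive tensor $A\otimes B$ (with $B$ of degree a proper nonzero multiple of $(a,b)$) is a sum $\sum q^{(\dots)}$ over the split that collapses by the geometric-series identity $\sum_{s} q^s = $ (closed form), leaving only the $i=0$ and $i=k$ terms, i.e.\ primitivity $\Delta_{d/k}(\text{\eqref{eqn:sensei}}) = \text{\eqref{eqn:sensei}}\otimes 1 + h_0^k\otimes\text{\eqref{eqn:sensei}}$.

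The main obstacle I anticipate is the bookkeeping in the second step: correctly identifying, for a fixed break position $i$, which binary strings $\e$ on the two sides arise and with what $q$-weight, including the subtle interaction between the floor-function jumps in $S^\e_i$ and the $\e_{i/a}$ shifts exactly at block boundaries. One must be careful that the "leading term" extraction in Proposition \ref{prop:radu} is taken after expanding $(1-qz_{i+1}/z_i)^{-1}$ in the prescribed direction, and that the symmetrization $\mathrm{Sym}$ in \eqref{eqn:defx} does not secretly produce extra boundary contributions from permutations that interleave the two blocks — these must be shown to have strictly smaller slope. Once the combinatorial identity that the weighted sum of cross-terms vanishes is set up cleanly (it should amount to a one-line geometric-sum cancellation, with the weight $q^s$ in \eqref{eqn:sensei} precisely engineered to make it work), the rest is routine. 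I would isolate the slope estimate and this cancellation as the two lemmas to check, and relegate the explicit computation of the $X$-products under a single break to the Appendix.
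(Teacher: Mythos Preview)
Your outline is on the right track for the first claim, but the second claim has two genuine gaps.

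First, the leading-term structure of $\Delta_{d/k}(X^\e_{k,d})$ is not a single ``break at a block boundary $i\in a\BZ$.'' Writing $X^\e_{k,d}$ as the sum over $\sigma\in S(k)$ in \eqref{eqn:sigma} and sending $z_{j>i}\mapsto\xi z_{j>i}$, a summand contributes at top order exactly when $A=\sigma^{-1}(\{i+1,\dots,k\})$ is a union of intervals $\{au_j+1,\dots,av_j\}$ with $u_j\in\e^{-1}(1)$ and $v_j\in\e^{-1}(0)$ (convention $\e_0=1$, $\e_n=0$); this is precisely when equality holds in \eqref{eqn:simpine}, and the dependence on the $\e$-values is essential, not just the condition $i\in a\BZ$. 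In particular $A$ can consist of \emph{several} intervals, so the resulting tensor factors are shuffle products of smaller $X$'s, not single elements $X^{(0^{r'}1^{s'})}\otimes X^{(0^{r''}1^{s''})}$; see \eqref{eqn:expdelta}. For $\e=(0^r1^s)$ one gets, besides the single-interval terms, a two-interval contribution with $X^{(0^{r'})}*X^{(1^{s'})}$ on the right of the tensor.

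Second, the cancellation is therefore not a numerical geometric-series collapse of coefficients. After summing over $r+s=n-1$ with weight $q^s$, what must vanish in each fixed right-hand bidegree is a combination of $X^{(0^{r'})}$, $X^{(1^{s'})}$, and the products $X^{(0^{r'})}*X^{(1^{s'})}$. The identity that does this is
\[
X^{(0^{t-1})}-q^{t-1}X^{(1^{t-1})}=\sum_{r+s=t-2}q^{s}\,X^{(0^{r})}*X^{(1^{s})},
\]
a shuffle-algebra statement (Proposition \ref{prop:drag}) proved via the telescoping relation $X^{(0^r)}*X^{(1^s)}=X^{(0^{r+1}1^s)}-qX^{(0^r1^{s+1})}$. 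This is the key lemma your sketch is missing; without it the cross terms do not cancel.

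For the first claim your plan works, but note that the governing inequality is \eqref{eqn:simpine}, not just the superadditivity of $\lfloor\cdot\rfloor$: the denominator factors $(1-qz_{\sigma(j+1)}/z_{\sigma(j)})^{-1}$ supply the $-1$ at each interval boundary, and the $\e$-shifts can \emph{raise} the order by $+1$ (when $\e_{x/a}=1$, $\e_{y/a}=0$), which that $-1$ absorbs. The reference to the $\ph_\rho$-machinery of Proposition \ref{prop:bound} is a red herring; it is not used here.
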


\begin{proof} Explicitly, we have:
\begin{equation}
\label{eqn:sigma}
X^\e_{k,d} = \sum_{\sigma \in S(k)} \frac {\prod_{j=1}^k z_{\sigma(j)}^{S^\e_j-S^\e_{j-1}}}{\left(1 - \frac {q z_{\sigma(2)}}{z_{\sigma(1)}} \right)...\left(1 - \frac {q z_{\sigma(k)}}{z_{\sigma(k-1)}}\right)} \prod_{i < j} \omega \left( \frac {z_{\sigma(i)}}{z_{\sigma(j)}}\right)
\end{equation}
To show that the above sum lies in $\CB^{\frac dk}$, we need to multiply the variables $z_{i+1},...,z_k$ by $\xi$ and show that we get something of order no greater than $\frac {(k-i)d}k$ as $\xi \rightarrow \infty$. In fact, we will show that each summand of \eqref{eqn:sigma} has this property. Since $\omega(0) = \omega(\infty) = 1$, the $\omega$ factors do not contribute to the limit. A permutation $\sigma \in S(k)$ is determined by:
$$
A=\sigma^{-1}(\{i+1,...,k\}) \qquad \textrm{ and } \qquad \sigma'\in \textrm{Perm}(A), \quad \sigma''\in \textrm{Perm}(\{1,...,k\}\backslash A)
$$
The set $A$ will be the most important part of the data; it will be of the form:
\begin{equation}
\label{eqn:a}
A  = \{x_1+1,y_1\} \cup \{x_2+1,y_2\} \cup ... \cup \{x_t+1,y_t\}
\end{equation}
where
$$
x_1 < y_1 < x_2 < y_2 < ... < x_t < y_t \in \{0,...,k\} 
$$
are certain indices such that:
\begin{equation}
\label{eqn:chto}
\sum_{j=1}^t (y_j-x_j) = k-i
\end{equation}
The term corresponding to $\sigma$ in \eqref{eqn:sigma} gets a contribution of $\xi$ to the power $\sum_{j=1}^t (S^\e_{y_j} -S^\e_{x_j})$ from the numerator, and $\xi$ to the power $-t+\delta_{x_1}^0$ from the denominator. Therefore, the fact that each summand of $X_{k,d}^\e$ in \eqref{eqn:sigma} has the required degree is equivalent to:
\begin{equation}
\label{eqn:ine}
\sum_{j=1}^t \left( S^\e_{y_j} - S^\e_{x_j}\right) - t + \delta_{x_1}^0 \leq \left \lfloor \frac {(k-i)d}k \right \rfloor
\end{equation}
The above follows from \eqref{eqn:chto} and the simpler inequality:
\begin{equation}
\label{eqn:simpine}
S^\e_y - S^\e_x - 1 +\delta_x^0 = \left \lfloor \frac {yd}k \right \rfloor - \left \lfloor \frac {xd}k \right \rfloor - \e_{y/a} + \e_{x/a} - 1 \leq \left \lfloor \frac {(y-x)d}k \right \rfloor
\end{equation}
which holds for any $x < y \in \{0,...,k\}$, where we make the convention that $\e_0=1$ and $\e_n=0$. This proves that $X_{k,d}^\e \in \CB^{\frac dk}$ for all vectors $\e$. To compute the coproduct $\Delta_{d/k}(X_{k,d}^\e)$, relation \eqref{eqn:radu2} tells us that we need to collect those terms of top order in $\xi$. To do so, we need to trace back through the above inequalities, and see when all of them simultaneously become equalities. We have equality in \eqref{eqn:simpine} when $y = av$ and $x=au$ for some $u<v$ such that $\e(u)=1$ and $\e(v)=0$. This means that sets \eqref{eqn:a} which produce terms of top degree are of the form:
$$
A=\{au_1+1,av_1\} \cup \{au_2+1,av_2\} \cup ... \cup \{au_t+1,av_t\}
$$
for some $u_1,...,u_t \in \e^{-1}(1)$ and $v_1,...,v_t \in \e^{-1}(0)$. Multiplying by $\xi$ the variables corresponding to elements of $A$ and taking the terms of top order in $\xi$ will have the following effect on \eqref{eqn:sigma}: \\

\begin{itemize}

\item the $\omega$ factors between small variables (those not multiplied by $\xi$) and large variables (those multiplied by $\xi$) will converge to 1 \\

\item the factors $\left(1-\frac {qz_{av_j+1}}{z_{av_j}}\right)$ in the denominator will converge to 1 \\

\item the factors $\left(1-\frac {qz_{au_j+1}}{z_{au_j}}\right)$ in the denominator will converge to $(-q)^{-1}\frac {z_{au_j}}{z_{au_j+1}}$ \\

\end{itemize}

Meanwhile, variables from different intervals $i \in \{au_{j}+1,av_j\}$ and $i'\in \{au_{j'}+1,av_{j'}\}$ will interact with each other only through factors $\omega(z_i/z_{i'})$, so the resulting expression will be a shuffle product $*$ of contributions from the individual intervals. We conclude that:
$$
\Delta_{d/k}(X^\e_{k,d}) = \sum_{t \geq 1}  \mathop{\sum^{v_1,...,v_t \in \e^{-1}(0)}}^{u_1,...,u_t \in \e^{-1}(1)}_{0 \leq u_1<v_1<...<u_t<v_t \leq n} (-q)^{-t+\delta_{u_1}^0} \cdot h_0^{a\sum_{i=1}^{t} (v_i-u_{i}-1)}
$$

\begin{equation}
\label{eqn:expdelta}
X^{(\e_1,...,\e_{u_1-1})} * ... * X^{(\e_{v_t+1},...,\e_{n-1})} \otimes X^{(\e_{u_1+1},...,\e_{v_1-1})} * ... * X^{(\e_{u_t+1},...,\e_{v_t-1})} \qquad
\end{equation}
In the above, we write $X^{\e_0} = X^{\e_0}_{ra,rb}$ for $\e_0$ any piece of $r-1$ successive entries of the vector $\e$. By using this formula, we see that:
$$
\Delta_{d/k} \left( \sum_{r+s=n-1}^{r,s \geq 0} q^{s} X^{(0^{r}1^{s})} \right) = \left( \sum_{r+s=n-1}^{r,s \geq 0} q^{s} X^{(0^{r}1^{s})} \right) \otimes 1 + h_0^{na} \otimes \left( \sum_{r+s=n-1}^{r,s \geq 0} q^{s} X^{(0^{r}1^{s})} \right) +
$$

$$
+ \sum_{r+s+r'=n-2}^{r, r', s \geq 0} q^{s} h_0^{(r'+1)a} X^{(0^{r}1^{s})} \otimes X^{(0^{r'})}  - \sum_{r+s+s'=n-2}^{r,s, s'\geq 0} q^{s+s'} h_0^{(s'+1)a}  X^{(0^{r}1^{s})} \otimes X^{(1^{s'})} -
$$

$$
- \sum_{r+s+r'+s'=n-3}^{r,r',s, s'\geq 0} q^{s+s'} h_0^{(r'+s'+2)a}  X^{(0^{r}1^{s})} \otimes X^{(0^{r'})}*X^{(1^{s'})}
$$
where $X^{(0^{-1})} = X^{(1^{-1})} :=1$. To show that the element \eqref{eqn:sensei} is primitive, we need to prove that the last two lines in the above cancel. This is an immediate consequence of the following proposition: \\

\begin{proposition}
\label{prop:drag}

For any $t$, we have: 
\begin{equation}
\label{eqn:id1}
X^{(0^{t-1})} - q^{t-1} \cdot X^{(1^{t-1})}  = \sum^{r, s \geq 0}_{r+s=t-2} q^{s} \cdot X^{(0^{r})} * X^{(1^s)}  
\end{equation}

\end{proposition}

\begin{proof} By definition, for $r,s \geq 0$ which sum up to $t-2$, we have:
$$
X^{(0^{r})} * X^{(1^s)} = 
$$

$$
=  \textrm{Sym} \left[ \frac {\prod_{j=1}^{ta} z_j^{\left \lfloor \frac {jd}k \right \rfloor - \left \lfloor \frac {(j-1)d}k \right \rfloor} \cdot \frac {z_{(r+2)a+1} ...  z_{(t-1)a+1}}{z_{(r+2)a}  ...  z_{(t-1)a}} }{ \left(1 - \frac {qz_2}{z_1}\right)...\left(1 - \frac {qz_{(r+1)a}}{z_{(r+1)a-1}}\right)\left(1 - \frac {qz_{(r+1)a+2}}{z_{(r+1)a+1}}\right)...\left(1 - \frac {qz_{k}}{z_{k-1}}\right)} \prod_{i < j} \omega \left(\frac {z_i}{z_j}\right) \right] = 
$$

$$
= \textrm{Sym} \left[ \frac {\prod_{j=1}^{ta} z_j^{\left \lfloor \frac {jd}k \right \rfloor - \left \lfloor \frac {(j-1)d}k \right \rfloor} \cdot \frac {z_{(r+2)a+1} ...  z_{(t-1)a+1}}{z_{(r+2)a}  ...  z_{(t-1)a}} \left(1 - \frac {q z_{(r+1)a+1}}{z_{(r+1)a}}\right)}{\left(1 - \frac {qz_2}{z_1}\right)...\left(1 - \frac {q z_{k}}{z_{k-1}}\right)} \prod_{i < j} \omega \left(\frac {z_i}{z_j}\right) \right] =
$$

$$
=X^{(0^{r+1}1^{s})} - q X^{(0^r1^{s+1})}
$$
If we multiply this identity by $q^s$ and add up over all $r+s = t-2$, the desired relation follows.

\end{proof}

\end{proof}

\subsection{}
\label{sub:fin}

The above Proposition implies that the shuffle element of \eqref{eqn:sensei} verifies property \eqref{eqn:gangnam}. By the uniqueness of elements satisfying this latter property, the shuffle element of \eqref{eqn:sensei} is proportional to $P_{k,d}$. To figure out the proportionality constant, let us introduce the linear map:
$$
\ph: \CA_{k,d} \longrightarrow \BC(q_1,q_2),
$$

$$
\ph(P) = \left[ P(z_1,...,z_k) \cdot \prod_{1\leq i \neq j \leq k} \frac {z_i-q_1z_j}{z_i-z_j} \right]_{z_i=q_1^{-i}} \cdot \frac {q_1^{\frac {-k^2+kd+d+2k}2}}{(1-q_2)^k} \prod_{i=1}^k \frac {q_1^{i-1}-q_2}{q_1^i-1}
$$
The following Proposition, which will be proved in the Appendix, shows how $\ph$ behaves under the shuffle product: \\

\begin{proposition}
\label{prop:ph}

Given $P \in \CA^+_{k,d}$ and $Q \in \CA^+_{l,e}$, we have: \\
\begin{equation}
\label{eqn:fry}
\ph(P * Q) = \ph(P)\ph(Q) \cdot q_1^{(ld-ke)/2}
\end{equation}
$$$$
\end{proposition}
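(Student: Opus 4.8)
The plan is to substitute the definition \eqref{eqn:mult} of the shuffle product into $\ph(P*Q)$ and to show that, after the specialization $z_i=q_1^{-i}$ that is built into $\ph$, the symmetrization collapses to a single surviving term. The first ingredient is a rewriting of $\ph$. For a shuffle element $R(z_1,\dots,z_m)$ in the form \eqref{eqn:shuf}, the factor $\prod_{1\leq i\neq j\leq m}\frac{z_i-q_1z_j}{z_i-z_j}$ cancels precisely the poles at $z_i=q_1z_j$, so that $R\cdot\prod_{i\neq j}\frac{z_i-q_1z_j}{z_i-z_j}=(-1)^{\binom m2}\,p(z_1,\dots,z_m)\big/\prod_{i\neq j}(z_i-q_2z_j)$, which is regular — hence finite and unambiguously defined — at $z_i=q_1^{-i}$ (no $q_2$-pole can occur there, since $1-q_2q_1^{n}$ is a nonzero element of $\BC(q_1,q_2)$ for $n\neq 0$). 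Thus $\ph(R)$ is, up to its explicit scalar prefactor, the value of this function at that point.

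Now I substitute \eqref{eqn:mult}. For $\sigma\in S(k+l)$ put $A=\sigma(\{1,\dots,k\})$ and $B=\sigma(\{k+1,\dots,k+l\})$, and split $\prod_{1\leq a\neq b\leq k+l}\frac{z_a-q_1z_b}{z_a-z_b}$ into the parts internal to $A$, internal to $B$, and across. The two internal parts turn $P(z_A)$ and $Q(z_B)$ into functions regular at $z_i=q_1^{-i}$, by the previous paragraph. Combining the across-part with the mixing factors $\prod_{a\in A,\,b\in B}\omega(z_a/z_b)$ of \eqref{eqn:mult}, each unordered pair $\{a,b\}$ with $a\in A$, $b\in B$ contributes $\frac{(z_b-q_1z_a)(z_a-qz_b)}{(z_b-z_a)(z_a-q_2z_b)}$. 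At $z_i=q_1^{-i}$ the factor $z_b-q_1z_a$ vanishes exactly when $a=b+1$, while all the remaining factors are finite and nonzero. Hence a summand of $\ph(P*Q)$ vanishes unless $A$ contains no $a$ with $a-1\in B$; equivalently $A$ is closed under subtracting $1$, which forces $A=\{1,\dots,k\}$. So only the $k!\,l!$ block-preserving permutations survive, and since $P$, $Q$ and the correction factors are symmetric in their own variables, these all contribute the same value, cancelling the $\frac{1}{k!\,l!}$ in \eqref{eqn:mult}.

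It remains to evaluate the common surviving term. For a block-preserving $\sigma$, $Q$ gets evaluated at $z_{k+1},\dots,z_{k+l}\mapsto q_1^{-k-1},\dots,q_1^{-k-l}$, which by homogeneity of degree $e$ differs from the value used in $\ph(Q)$ by a factor $q_1^{-ke}$ (the degree-zero correction factor attached to $Q$ is unaffected by this rescaling). Assembling the pieces, $\ph(P*Q)=\ph(P)\,\ph(Q)\cdot q_1^{-ke}$ times the ratio of scalar prefactors coming from the definition of $\ph$, times the residual cross factor $\big[\prod_{i=1}^{k}\prod_{j=k+1}^{k+l}\frac{(z_j-q_1z_i)(z_i-qz_j)}{(z_j-z_i)(z_i-q_2z_j)}\big]_{z_i=q_1^{-i}}$. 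This last product telescopes in each variable, collapsing to an explicit finite product of factors of the form $1-q_1^{*}$ and $1-q_1^{*}q_2$; a direct computation then shows that these, together with the $q_2$-dependent parts of the prefactors, cancel, and that all accumulated powers of $q_1$ combine to exactly $q_1^{(ld-ke)/2}$.

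The main obstacle is the vanishing step: one must be certain that the zeros created by the $\omega$-factors at $z_i=q_1^{-i}$ are genuine and are not cancelled by poles of $P(z_A)$ or $Q(z_B)$ — which is exactly why the rewriting in the first paragraph, exhibiting those blocks as honestly regular at the specialization point, is indispensable — and one must correctly pin down the combinatorial condition forcing $A=\{1,\dots,k\}$. The remaining constant bookkeeping is routine but must be carried out honestly; it is precisely here that the otherwise opaque scalar prefactors in the definition of $\ph$ reveal their purpose, namely to make everything collapse to the single power $q_1^{(ld-ke)/2}$.
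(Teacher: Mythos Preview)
Your argument is correct and follows essentially the same route as the paper's own proof: both substitute the shuffle product into $\ph$, combine the cross-part of the correction factor $\prod_{i\neq j}\frac{z_i-q_1z_j}{z_i-z_j}$ with the $\omega$-factors to produce numerator factors $(z_b-q_1z_a)$ for $a\in A$, $b\in B$, and then observe that at $z_i=q_1^{-i}$ these force $A=\{1,\dots,k\}$, after which the residual products telescope against the scalar prefactors. If anything, you are more careful than the paper at one point: the paper simply asserts that the vanishing of $(z_j-q_1z_i)$ kills all other summands, whereas you explicitly address why this zero is not swallowed by a pole of $P(z_A)$ or $Q(z_B)$, via your regularity rewriting in the first paragraph.
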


\subsection{}

We will now proceed to compute the values of the shuffle element \eqref{eqn:sensei} and of $P_{k,d}$ under $\ph$, in order to figure out the proportionality constant between them. \\

\begin{proposition}

For all $k\geq 1$ and $d\in \BZ$ with $\gcd(k,d)=n$, we have:
\begin{equation}
\label{eqn:10}
\ph(P_{k,d})= \frac 1{q_1^{\frac n2}-q_1^{-\frac n2}}
\end{equation}

\end{proposition}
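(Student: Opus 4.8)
The plan is to prove the formula first when $\gcd(k,d)=1$, by induction on $k$, and then to use the exponential relation \eqref{eqn:shi} to reach the general case.

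For the base case $k=1$ (so $n=1$) I would simply unwind the definition of $\ph$ on $P_{1,d}=z_1^d$: the product $\prod_{1\le i\ne j\le 1}$ is empty, and the surviving scalar factors collapse to $q_1^{-d}\cdot q_1^{d+1/2}/(q_1-1)=q_1^{1/2}/(q_1-1)=(q_1^{1/2}-q_1^{-1/2})^{-1}$, as required. For $\gcd(k,d)=1$ and $k>1$, I would pick an empty triangle $(0,0),(k_2,d_2),(k,d)$ of minimal area, which exists as in the proof of Proposition \ref{prop:surj}. Since $\gcd(k,d)=1$ forces $Q_{k,d}=\alpha_1 P_{k,d}$ through \eqref{eqn:shi}, relation \eqref{eqn:relationx} reads $P_{k,d}=[P_{k_1,d_1},P_{k_2,d_2}]$ with $k_1,k_2<k$ and $\gcd(k_1,d_1)=\gcd(k_2,d_2)=1$ (emptiness of the triangle). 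Applying Proposition \ref{prop:ph} to both orderings of the shuffle product and invoking the inductive hypothesis $\ph(P_{k_i,d_i})=(q_1^{1/2}-q_1^{-1/2})^{-1}$, the commutator maps under $\ph$ to $(q_1^{1/2}-q_1^{-1/2})^{-2}(q_1^{c/2}-q_1^{-c/2})$, where $c=k_2 d_1-k_1 d_2$ is twice the signed area of the triangle; for an empty triangle with $\gcd(k,d)=1$ this area is $\frac{1}{2}$ by Pick's theorem, and the slope inequality $d_1/k_1>d_2/k_2$ forces $c=+1$. Hence $\ph(P_{k,d})=(q_1^{1/2}-q_1^{-1/2})^{-1}$, finishing the case $\gcd(k,d)=1$.

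For general $n=\gcd(k,d)$, write $(k,d)=n(a,b)$ with $\gcd(a,b)=1$; then every $P_{ma,mb}$ lies in the subalgebra $\CB^{b/a}$ of \eqref{eqn:b}. The key point is that $\ph$ restricts to an \emph{algebra} homomorphism on $\CB^{b/a}$: in Proposition \ref{prop:ph} the correction exponent $(ld-ke)/2$ vanishes whenever $d/k=e/l$. Feeding this homomorphism into the identity \eqref{eqn:shi} turns it into a scalar identity of formal power series in $x$ relating the numbers $\ph(Q_{ma,mb})$ to the numbers $\ph(P_{ma,mb})$. I would then compute $\ph(Q_{ma,mb})$ for every $m\ge 1$ by the same commutator argument, attached to an empty triangle $(0,0),(k_2,d_2),(ma,mb)$: here $\gcd(k_i,d_i)=1$, so the case already proved gives $\ph(P_{k_i,d_i})=(q_1^{1/2}-q_1^{-1/2})^{-1}$, while the relevant cross product equals $m$ — again Pick's theorem, since the segment from $(0,0)$ to $(ma,mb)$ carries $m-1$ interior lattice points, making the empty triangle have area $m/2$. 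This yields $\ph(Q_{ma,mb})=\alpha_1(q_1^{m/2}-q_1^{-m/2})/(q_1^{1/2}-q_1^{-1/2})^2$ for all $m$.

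Finally I would invert the exponential using the formal power series identity
$$1+\sum_{m\ge 1}\frac{\alpha_1(q_1^{m/2}-q_1^{-m/2})}{(q_1^{1/2}-q_1^{-1/2})^2}\,x^m \;=\; \exp\left(\sum_{m\ge 1}\frac{\alpha_m\,x^m}{q_1^{m/2}-q_1^{-m/2}}\right),$$
which I would verify by checking that both sides equal $\frac{(1-xq_1^{1/2}q_2)(1-xq_1^{-1/2}q_2^{-1})}{(1-xq_1^{1/2})(1-xq_1^{-1/2})}$: the right-hand side after writing $\alpha_m/(q_1^{m/2}-q_1^{-m/2})=q_1^{m/2}(q_2^m-1)(q^{-m}-1)/m$, splitting into four geometric-type sums and summing the logarithmic series, and the left-hand side by a one-line partial-fraction manipulation. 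Comparing this with the image of \eqref{eqn:shi} under $\ph$, where the computed numbers $\ph(Q_{ma,mb})$ match the coefficients above, and taking logarithms yields $\ph(P_{ma,mb})=(q_1^{m/2}-q_1^{-m/2})^{-1}$ for all $m$; the case $m=n$ is the assertion. The main obstacle I anticipate is bookkeeping rather than ideas: carefully tracking the half-integer powers of $q_1$ that $\ph$ and Proposition \ref{prop:ph} produce, and pinning down the cross products via Pick's theorem; the power series identity itself is routine once written down.
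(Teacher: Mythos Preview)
Your proposal is correct and follows essentially the same approach as the paper: compute $\ph(Q_{k,d})$ via the commutator relation \eqref{eqn:relationx} and Proposition \ref{prop:ph}, invoke Pick's theorem to identify the cross product $k_2d_1-k_1d_2$, use the multiplicativity of $\ph$ on same-slope elements to push $\ph$ through \eqref{eqn:shi}, and then invert the resulting generating-function identity. The only difference is organizational: you isolate the $\gcd(k,d)=1$ case first and then handle general $n$ without further induction, whereas the paper runs a single induction on $k$ for the full statement; since the paper's inductive step only ever invokes the hypothesis at coprime pairs $(k_i,d_i)$ (the triangle being empty), the two arrangements are equivalent.
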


\begin{proof} We will prove this statement by induction on $k$. The base case $k=1$ is trivial. Let us pick an empty triangle with vertices $(0,0), (k_2,d_2), (k,d)$ and apply relation \eqref{eqn:relationx}:
$$
Q_{k,d} = \alpha_1 [P_{k_1,d_1},P_{k_2,d_2}]
$$
Applying $\ph$ and Proposition \ref{prop:ph} to the above relation gives us:
$$
\ph(Q_{k,d}) = \alpha_1 \frac { q_1^{\frac{k_2d_1-k_1d_2}2}}{\left (q_1^{\frac 12}-q_1^{-\frac 12} \right)^2} -\alpha_1 \frac { q_1^{\frac{k_1d_2-k_2d_1}2}}{\left (q_1^{\frac 12}-q_1^{-\frac 12} \right)^2}  = \frac {(q_2-1)(q^{-1}-1)\left(q_1^{\frac n2} - q_1^{-\frac n2}\right)}{(1-q_1^{-1})}
$$
where in the last relation we used Pick's theorem: $k_2d_1-k_1d_2=n$. As a consequence of Proposition \ref{prop:ph}, the linear map $\ph$ is multiplicative on elements of the same slope. We may write $(k,d) = (na,nb)$ and apply $\ph$ to relation \eqref{eqn:shi}:
$$
\exp\left(\sum_{n\geq 1}\alpha_n x^n \ph(P_{na,nb}) \right) = 1 + \sum_{n\geq 1} x^n \ph(Q_{na,nb}) \Longrightarrow
$$

$$
\Rightarrow \sum_{n\geq 1}\alpha_n x^n \ph(P_{na,nb}) = \log\left[1+ \frac {(q_2-1)(q^{-1}-1)}{1-q_1^{-1}} \sum_{n\geq 1} x^n \left(q_1^{\frac n2} - q_1^{-\frac n2}\right) \right] =
$$

$$
=\log\left[1+ \frac {(q_2-1)(q^{-1}-1)q^{\frac 12}_1x}{\left(1-xq_1^{\frac 12}\right)\left(1-xq_1^{- \frac 12}\right)}\right] = \sum_{n\geq 1} \frac {\alpha_nx^n}{q_1^{\frac n2}-q_1^{-\frac n2}} 
$$

\end{proof}

\begin{proposition} 
\label{prop:2}

For all $k\geq 1$ and $d\in \BZ$ with $\gcd(k,d)=n$, and all vectors $\e \in \{0,1\}^{n-1}$, we have:
$$
\ph(X_{k,d}^\e) =  \frac {q_1^{\frac n2-(\#\emph{ of ones in }\e)}}{(q_1-1)^k(1-q_2)^{k-1}}
$$

\end{proposition}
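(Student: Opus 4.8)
The plan is to exploit the combinatorics of the symmetrizing operator to collapse $\ph(X^\e_{k,d})$ to a single summand, and then carry out an elementary computation in $q_1,q_2$. Recall from the definition of $\ph$ that, for $P\in\CA_{k,d}$,
$$\ph(P)=c_{k,d}\cdot\Big[P(z_1,\dots,z_k)\prod_{1\leq i\neq j\leq k}\frac{z_i-q_1z_j}{z_i-z_j}\Big]_{z_i=q_1^{-i}},\qquad c_{k,d}=\frac{q_1^{(-k^2+kd+d+2k)/2}}{(1-q_2)^k}\prod_{i=1}^k\frac{q_1^{i-1}-q_2}{q_1^i-1}.$$
Write $X^\e_{k,d}=X_{m_1,\dots,m_k}=\sym[F]$, where $m_j=S^\e_j-S^\e_{j-1}$ and $F$ is the rational function in the bracket of \eqref{eqn:defx}. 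Since $G:=\prod_{i\neq j}\frac{z_i-q_1z_j}{z_i-z_j}$ is symmetric, $X^\e_{k,d}\cdot G=\sym[FG]$, and the first step is to simplify $FG$: the consecutive denominators $1-qz_{i+1}/z_i$ cancel the consecutive factors $(z_i-qz_{i+1})$ produced by $\prod_{i<j}\omega(z_i/z_j)$ after multiplication by $G$, leaving
$$FG=\Big(\prod_{i=1}^{k-1}z_i\Big)\prod_{j=1}^{k}z_j^{m_j}\cdot\frac{\prod_{i<j,\, j-i\geq 2}(z_i-qz_j)\cdot\prod_{i<j}(z_j-q_1z_i)}{\prod_{i<j}(z_i-q_2z_j)(z_j-z_i)},$$
which is regular along $z_i=q_1^{-i}$.

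The heart of the argument will be the observation that
$$\ph(X^\e_{k,d})=c_{k,d}\sum_{\sigma\in S(k)}(FG)\big(q_1^{-\sigma(1)},\dots,q_1^{-\sigma(k)}\big)$$
has only one nonzero term, the one for $\sigma=\mathrm{id}$. Indeed, the factor $\prod_{i<j}(z_j-q_1z_i)$ becomes $\prod_{i<j}(q_1^{-\sigma(j)}-q_1^{1-\sigma(i)})$, which vanishes as soon as there exist positions $i<j$ with $\sigma(i)=\sigma(j)+1$; a one-line argument shows this can fail only when $1,2,\dots,k$ appear in increasing order of position, i.e. $\sigma=\mathrm{id}$. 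One then checks that $(FG)(q_1^{-1},\dots,q_1^{-k})$ is a nonzero rational function of $q_1,q_2$, so this single term indeed survives.

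It then suffices to evaluate this one term. The only $m_j$-dependent factor in $(FG)(q_1^{-1},\dots,q_1^{-k})$ is $q_1^{-\sum_j j m_j}$; by Abel summation $\sum_j j m_j=kd-\sum_{j=1}^{k-1}S^\e_j$, and combining the classical identity $\sum_{j=1}^{k-1}\lfloor jd/k\rfloor=(kd-k-d+n)/2$ with $\sum_{j=1}^{k-1}\e_{j/a}=w$ (where $w$ denotes the number of ones in $\e$) gives $\sum_j j m_j=(kd+k+d-n)/2+w$. Substituting, the $kd$- and $d$-terms cancel against the exponent of $q_1$ in $c_{k,d}$, so $\ph(X^\e_{k,d})=q_1^{n/2-w}\cdot R_k$, where $R_k$ is an explicit rational function of $q_1,q_2$ assembled from $\prod_{i=1}^{k-1}q_1^{-i}$, the four products $\prod(z_i-qz_j)$, $\prod(z_j-q_1z_i)$, $\prod(z_i-q_2z_j)$, $\prod(z_j-z_i)$ evaluated at $z_i=q_1^{-i}$, and the remaining part of $c_{k,d}$ — and crucially $R_k$ depends only on $k$, not on $d$ or $\e$. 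The claim is then equivalent to $R_k=1/\big((q_1-1)^k(1-q_2)^{k-1}\big)$.

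I would prove this last identity by sorting all four products over pairs $(i,j)$ according to $l=j-i$: each factor becomes $q_1^{\bullet}$ times one of $1-q_1^{-l}$, $1-q_1^{-l-1}$, $1-q_2q_1^{\pm l}$, so that after collecting the powers of $q_1$ the factors of type $1-q_2q_1^{-l}$ telescope against $\prod_{i=1}^k(q_1^{i-1}-q_2)$, the factors of type $1-q_1^{-l}$ telescope against $\prod_{i=1}^k(q_1^i-1)$, and a short count of the leftover powers of $q_1$ produces the claimed value; the case $k=1$ is immediate (it gives $q_1^{1/2}/(q_1-1)$, matching the formula) and serves as a sanity check. The conceptual content — that the specialization $z_i=q_1^{-i}$ annihilates every term of $\sym[FG]$ except the identity one — is the short and essential part of the proof; I expect the main obstacle to be the concluding bookkeeping, which is of the same computational character as the other lemmas relegated to the Appendix and could equally be placed there.
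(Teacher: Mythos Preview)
Your proof is correct and follows essentially the same approach as the paper's: both arguments hinge on the observation that the factor $\prod_{i<j}(z_{\sigma(j)}-q_1 z_{\sigma(i)})$ kills every summand except the identity permutation upon specializing $z_i=q_1^{-i}$, then reduce the exponent of $q_1$ via $\sum_{j=1}^{k-1}\lfloor jd/k\rfloor=(kd-k-d+n)/2$ (which the paper cites as ``Pick's theorem''), and finally telescope the remaining products. The only cosmetic difference is that you precancel the consecutive $(1-qz_{i+1}/z_i)$ factors before specializing, whereas the paper keeps them and lets them become $(1-q_2)^{-(k-1)}$ directly.
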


\begin{proof} The shuffle element $X_{k,d}^\e$ is given by the symmetric rational function of \eqref{eqn:sigma}. Together with the definition of $\ph$, this gives us:
$$
\ph(X^\e_{k,d}) = \frac {q_1^{\frac {-k^2+kd+d+2k}2}}{(1-q_2)^k} \prod_{i=1}^k \frac {q_1^{i-1}-q_2}{q_1^i-1} \sum_{\sigma \in S(k)} \frac { \prod_{j=1}^k z_{\sigma(j)}^{S^\e_j-S^\e_{j-1}}}{\left(1 - \frac {q z_{\sigma(2)}}{z_{\sigma(1)}}\right)...\left(1 - \frac {q z_{\sigma(k)}}{z_{\sigma(k-1)}}\right)} 
$$

$$
\prod_{i < j} \frac {(z_{\sigma(j)}-q_1 z_{\sigma(i)})(z_{\sigma(i)}-qz_{\sigma(j)})}{(z_{\sigma(j)}-z_{\sigma(i)})(z_{\sigma(i)}-q_2z_{\sigma(j)})} \Big |_{z_i=q_1^{-i}}
$$
Only one term survives when we evaluate the above at $z_i=q_1^{-i}$, namely the one corresponding to the identity permutation. Therefore, the above gives:
$$
\ph(X^\e_{k,d}) =\frac {q_1^{\frac {-k^2+kd+d+2k}2}}{(1-q_2)^k} \frac { q_1^{\sum_{j=1}^k j(S^\e_{j-1}-S^\e_{j})}}{(1-q_2)^{k-1}} \prod_{i=1}^k \frac {q_1^{i-1}-q_2}{q_1^i-1}  \prod_{i < j} \frac {(q_1^{-j}-q_1^{1-i})(q_1^{-i}-q_2q_1^{1-j})}{(q_1^{-j}-q_1^{-i})(q_1^{-i}-q_2q_1^{-j})} 
$$

$$
=  \frac {q_1^{\frac {-kd+d+k}2+\sum_{j=1}^{k-1} S^\e_j}}{(q_1-1)^k(1-q_2)^{k-1}} =  \frac {q_1^{\frac n2-(\#\textrm{ of ones in }\e)}}{(q_1-1)^k(1-q_2)^{k-1}}
$$
where the last equality follows by Pick's theorem. \\

\end{proof}

\begin{proof} \textbf{of Theorem \ref{thm:main}:} Proposition \ref{prop:2} implies that:
$$
\ph\left(\sum_{r+s=n-1}^{r,s \geq 0} q^{s} X_{k,d}^{(0^{r}1^{s})}\right) = \frac {q_1^{\frac n2} (1-q_2^n)}{(q_1-1)^k(1-q_2)^k}
$$
Comparing this with \eqref{eqn:10} implies that:
\begin{equation}
\label{eqn:sashaa}
P_{k,d} = \frac {(q_1-1)^k(1-q_2)^k}{(q_1^{n}-1)(1-q_2^n)} \sum_{s_0+s_1=n-1}^{s_0,s_1 \geq 0} q^{s_1} X_{k,d}^{(0^{s_0}1^{s_1})}
\end{equation}
Unraveling the definition of $X_{k,d}^\e$ gives us precisely the desired \eqref{eqn:70}. \\

\end{proof}

\subsection{}

For any fixed pair $a,b\in \BN$ with $\gcd(a,b)=1$, Theorem \ref{thm:iso} gives us an isomorphism:
\begin{equation}
\label{eqn:isom}
\Lambda \cong \CB^{\frac ba} = \bigoplus_{n=0}^\infty \CA_{an,bn}^{\frac ba}, \qquad p_n \longrightarrow \frac {(-1)^{n-1} (q_1^{n}-1)(1-q_2^n)}{(q_1-1)^{na}(1-q_2)^{nb}} \cdot P_{na,nb}
\end{equation}
where $\Lambda$ is the bialgebra of symmetric polynomials in infinitely many variables over $\BC(q_1,q_2)$, and $p_n$ denote the power sum functions. This isomorphism sends the natural coproduct on $\Lambda$ to the coproduct $\Delta_{b/a}$ on $\CB^{\frac ba}$, since the $p_n$ and the $P_{na,nb}$ are primitive for the respective coproducts. \\ %By Proposition \ref{prop:ortho}, the above isomorphism sends the bialgebra pairing $(\cdot,\cdot)$ on $\CB^{\frac ba}$ to the bialgebra pairing:

%$$
%(p_n,p_n) = \frac {n c_1^{na}c_2^{nb}}{(q_1^n-1)(q_2^n-1)(q^{-n}-1)}
%$$ 
%on $\Lambda$. \\

\subsection{}

A quick consequence of \eqref{eqn:sashaa} and \eqref{eqn:isom} is that for each $n$, the vector space $\CB^{\frac ba}_n = \CA_{an,bn}^{\frac ba}$ has a set of linear generators given by the shuffle elements $X_{na,nb}^\e$ of \eqref{eqn:xkd}, as $\e \in \{0,1\}^{n-1}$. There are $2^{n-1}$ such vectors, which is in general greater than the dimension of: 
$$
\CB^{\frac ba}_n \cong \Lambda_n,
$$
which equals the number of integer partitions of $n$. We will now discuss which elements of $\Lambda_n$ correspond to $X_{na,nb}^\e$ under the isomorphism \eqref{eqn:isom}. \\

An important set of symmetric polynomials in $\Lambda$ are the Schur functions $s_\lambda$, defined for any integer partition $\lambda = (\lambda_1 \geq \lambda_2 \geq...)$. To any such parition, we can associate a Young diagram, meaning a set of unit lattice squares in the first quadrant, such that there are $\lambda_1$ of them on the first row, $\lambda_2$ on the second row etc.  Given a pair of integer partitions $\mu \subset \lambda$, by which we mean that the associated Young diagrams are contained one inside the other, one can associate the skew Schur functions $s_{\lambda/\mu}$. The connection between Skew schur functions and Schur functions is given by the Littlewood-Richardson rule (see \cite{Mac}). \\

To any vector $\e \in \{0,1\}^{n-1}$ we can associate a skew diagram $\lambda/\mu$ consisting of $n$ boxes as follows: start from a box on the leftmost column. At each step, depending on whether the corresponding entry of $\e$ is 0 or 1, go either one box down or one box to the right. This traces out a set of boxes, which we then shift vertically so that the bottom box of the set is on the first row. This traces out a particular skew diagram, and we write $s_\e \in \Lambda_n$ for the corresponding skew Schur function. Andrei Okounkov observed that these particular Schur functions correspond to $X_{na,nb}^\e$: \\ 

\begin{proposition}

Under the isomorphism \eqref{eqn:isom}, we have:
\begin{equation}
\label{eqn:visa}
s_\e \longrightarrow (-q)^{r} X_{na,nb}^\e
\end{equation}
where $r$ denotes the number of ones in the vector $\e$. \\

\end{proposition}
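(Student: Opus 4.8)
Throughout, write $\Phi\colon\Lambda\longrightarrow\CB^{\frac ba}$ for the algebra isomorphism \eqref{eqn:isom}, and for $\e\in\{0,1\}^{n-1}$ set
$$
Y^\e\ :=\ (-q)^{r(\e)}\,X^\e_{na,nb}\ \in\ \CB^{\frac ba}_n,\qquad r(\e):=\#\{i:\e_i=1\};
$$
the assertion \eqref{eqn:visa} is $\Phi(s_\e)=Y^\e$. The plan begins with a multiplication rule for the $X^\e$ that generalizes Proposition \ref{prop:drag}: for $\e'\in\{0,1\}^{n'-1}$, $\e''\in\{0,1\}^{n''-1}$ and $n=n'+n''$,
\begin{equation}
\label{eqn:prodrule}
X^{\e'}_{n'a,n'b}*X^{\e''}_{n''a,n''b}\ =\ X^{(\e',0,\e'')}_{na,nb}\ -\ q\,X^{(\e',1,\e'')}_{na,nb}.
\end{equation}
This is proved by the same computation as Proposition \ref{prop:drag}: in the shuffle product of the two symmetric rational functions written as in \eqref{eqn:sigma}, the denominator is that of $X^{(\e',0,\e'')}_{na,nb}$ with the single junction factor $1-qz_{n'a+1}/z_{n'a}$ deleted; absorbing this factor into the numerator and expanding gives precisely the two terms on the right, since the exponent vectors of $X^{(\e',0,\e'')}$ and $X^{(\e',1,\e'')}$ differ only by lowering the exponent of $z_{n'a}$ and raising that of $z_{n'a+1}$ by one. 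Multiplying \eqref{eqn:prodrule} through by $(-q)^{r(\e')+r(\e'')}$ and using $r(\e',0,\e'')=r(\e')+r(\e'')$, $r(\e',1,\e'')=r(\e')+r(\e'')+1$, we get the cleaner identity
\begin{equation}
\label{eqn:prodruleY}
Y^{\e'}*Y^{\e''}\ =\ Y^{(\e',0,\e'')}\ +\ Y^{(\e',1,\e'')}.
\end{equation}

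Next I would recall the relevant combinatorics of ribbons. Writing $\e=(1^{\alpha_1-1},0,1^{\alpha_2-1},0,\dots,0,1^{\alpha_k-1})$, the skew shape attached to $\e$ in the statement is the ribbon (border strip) with successive row lengths $(\alpha_1,\dots,\alpha_k)$, so that $r(\e)=n-k$ and $s_\e=R_{(\alpha_1,\dots,\alpha_k)}$ is a ribbon Schur function; in particular $h_m=R_{(m)}=s_{(1^{m-1})}$ and $e_m=R_{(1^m)}=s_{(0^{m-1})}$. In this dictionary, inserting a $0$ between two $\e$-strings stacks the two ribbons (concatenates their compositions, $\alpha\cdot\beta$), and inserting a $1$ fuses the last row of the first ribbon with the first row of the second ($\alpha\triangleright\beta$), so the classical product rule $R_\alpha R_\beta=R_{\alpha\cdot\beta}+R_{\alpha\triangleright\beta}$ for ribbon Schur functions (see \cite{Mac}) reads exactly as \eqref{eqn:prodruleY} with $Y$ replaced by $s$. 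Now let $\Phi'\colon\Lambda\longrightarrow\CB^{\frac ba}$ be the unique algebra homomorphism sending the free generators $h_m$ of $\Lambda$ to $Y^{(1^{m-1})}$. The standard Möbius inversion that expresses each ribbon $R_\alpha$ as a fixed universal $\BZ$-polynomial in the $h_m$'s (starting from $\prod_i h_{\gamma_i}=\sum_{\gamma'\ \text{coarser than}\ \gamma}R_{\gamma'}$) goes through verbatim with $Y^{(1^{m-1})}$ in place of $h_m$, by \eqref{eqn:prodruleY}; hence $\Phi'(s_\e)=Y^\e$ for every $\e$.

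It then remains to prove $\Phi'=\Phi$, for which it is enough to check agreement on the algebra generators $p_n\in\Lambda$. On the one hand, since $\Phi'(s_\e)=Y^\e$ and the power sum expands in hook Schur functions as $p_n=\sum_{s_0+s_1=n-1}(-1)^{s_0}s_{(0^{s_0}1^{s_1})}$ (classical; $s_{(0^{s_0}1^{s_1})}=s_{(s_1+1,1^{s_0})}$, see \cite{Mac}), we get $\Phi'(p_n)=\sum_{s_0+s_1=n-1}(-1)^{s_0}Y^{(0^{s_0}1^{s_1})}$. On the other hand, substituting the explicit formula \eqref{eqn:sashaa} for $P_{na,nb}$ into \eqref{eqn:isom} and rewriting $X^{(0^{s_0}1^{s_1})}=(-q)^{-s_1}Y^{(0^{s_0}1^{s_1})}$ (using $q=q_1q_2$), one computes — all the normalization constants cancelling — that $\Phi(p_n)=\sum_{s_0+s_1=n-1}(-1)^{s_0}Y^{(0^{s_0}1^{s_1})}$ as well. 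Therefore $\Phi=\Phi'$, so $\Phi(s_\e)=Y^\e=(-q)^{r(\e)}X^\e_{na,nb}$, which is \eqref{eqn:visa}.

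The only genuine computation is the product rule \eqref{eqn:prodrule}: although it is just a book-keeping extension of Proposition \ref{prop:drag}, one has to keep careful track of which denominator factor disappears at the junction $\{z_{n'a},z_{n'a+1}\}$ and of how the floor exponents $S^\e_j$ behave there. The remaining work is purely combinatorial — fixing once and for all the dictionary between binary strings, ribbons, compositions and hook shapes — together with the verification that the scalars in $\Phi(p_n)=\Phi'(p_n)$ really do cancel, which comes down to pinning down the precise normalization in \eqref{eqn:isom}.
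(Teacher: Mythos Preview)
Your proof is correct and follows essentially the same route as the paper: both arguments rest on the product rule $X_{\e'}*X_{\e''}=X_{(\e'0\e'')}-qX_{(\e'1\e'')}$ (your \eqref{eqn:prodrule}, the paper's analogue of Proposition \ref{prop:drag}), the matching ribbon identity $s_{\e'}s_{\e''}=s_{(\e'0\e'')}+s_{(\e'1\e'')}$, and then a reduction to checking the map on $p_n$ via the hook-Schur expansion and formula \eqref{eqn:sashaa}. Your packaging via the auxiliary homomorphism $\Phi'$ and the M\"obius inversion for ribbons is a bit more explicit than the paper's bare induction, but the substance is the same.
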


\begin{proof} Since $a$ and $b$ are fixed, let us denote $X_{na,nb}^\e$ simply by $X_\e$. We have the following equality:
$$
s_\e s_{\e'} = s_{(\e 0 \e')} + s_{(\e 1 \e')}
$$
where $(\e x \e')$ is the vector obtained by concatenating $\e$ and $\e'$ and putting the digit $x$ between them. This easily follows from the definition of skew Schur functions as sum of monomials indexed by semi-standard Young tableaux, so we leave it as an exercise. We will prove the Proposition by induction on the length of $\e$. By formulas \eqref{eqn:defx} and \eqref{eqn:xkd} that define $X_\e$, it is straightforward to see that:
$$
X_\e * X_{\e'} = X_{(\e 0 \e')} - q X_{(\e 1 \e')}
$$
The proof of this closely follows that of Proposition \ref{prop:drag}, so we will leave it as an exercise. The induction hypothesis then implies that $s_{(\e 0 \e')} + s_{(\e 1 \e')}$ is sent exactly where \eqref{eqn:visa} claims it is sent for all $\e,\e'$. Therefore, to show that \eqref{eqn:visa} sends each $s_\e$ to $X_\e$ it is enough to do so for any given one of them, say for $s_{(0^{n-1})}$. But then by the same argument, it is enough to show that \eqref{eqn:visa} sends:
$$
\sum_{r=0}^{n-1} (-1)^{n-1-r} s_{(0^{n-1-r}1^{r})} \longrightarrow (-1)^{n-1}\sum_{r=0}^{n-1} q^{r} X_{(0^{n-1-r}1^{r})}
$$
The functions $s_{(0^{n-1-r}1^r)}$ are the well-known Schur functions for diagrams of hook shape, and it is well-known that the linear combination in the LHS equals the power sum function $p_n$. Meanwhile, the element on the right is: 
$$
\frac {(-1)^{n-1} (q_1^{n}-1)(1-q_2^n)}{(q_1-1)^{na}(1-q_2)^{nb}} \cdot P_{na,nb},
$$ 
and these two are mapped into each other by \eqref{eqn:isom}, by definition. 

\end{proof}

\section{Appendix}
\label{sec:app}

In the remainder of this paper, we will present proofs to some of the more computational results in this paper: \\

\begin{proof} \textbf{of Proposition \ref{prop:algebra}:} We will prove the following more general statement. Suppose we have shuffle elements $P,P'$ such that the following estimates hold as $\xi \longrightarrow \infty$, for all $i$:
$$
P(\xi z_1,...,\xi z_i,z_{i+1},...,z_k) =\xi^{\left \lfloor \mu i \right \rfloor} \sum P^{(1)}_i(z_1,...,z_i)P^{(2)}_i(z_{i+1},...,z_k) + o(\xi^{\left \lfloor \mu i \right \rfloor})
$$

$$
P'(\xi z_1,...,\xi z_{i'},z_{i'+1},...,z_{k'}) = \xi^{\left \lfloor \mu i' \right \rfloor} \sum P^{(1)'}_i(z_1,...,z_i)P^{(2)'}_i(z_{i+1},...,z_{k'}) + o(\xi^{\left \lfloor \mu i' \right \rfloor})
$$
The above sums indicate that there may be several terms $P^{(1)}_i(\cdot) P^{(2)}_i(\cdot)$ appearing for each $i$. Then we can write:
$$
(P*P')(\xi z_1,...,\xi z_j,z_{j+1},...,z_{k+k'}) = \sum^{i+i'=j}_{\left \lfloor \mu i \right \rfloor + \left \lfloor \mu i' \right \rfloor = \left \lfloor \mu j \right \rfloor}  \xi^{\left \lfloor \mu j \right \rfloor}
$$

\begin{equation}
\label{eqn:generalclaim}
 \sum (P^{(1)}_i*P^{(1)'}_{i'})(z_1,...,z_j) (P^{(2)}_i*P^{(2)'}_{i'})(z_{j+1},...,z_{k+k'}) + o(\xi^{\left \lfloor \mu j \right \rfloor})
\end{equation}
by the definition of the product in \eqref{eqn:mult}, and because: 
$$
\lim_{\xi \rightarrow \infty} \omega(\xi) = \lim_{\xi \rightarrow 0} \omega(\xi) = 1
$$
Recalling the definition of the space $\CA^\mu$, this is precisely what we needed to prove. \\

\end{proof}

\begin{proof} \textbf{of Proposition \ref{prop:copcheck}:} The coassociativity of the coproduct is the statement that $(\Delta \otimes \text{Id})\circ \Delta = (\text{Id} \otimes \Delta)\circ \Delta$. This relation indeed holds on any shuffle element $P\in \CA^+_k$, since both sides are equal to:
$$
\sum_{0\leq i \leq j \leq k} \frac {\left(\prod_{b>i} h(z_b) \otimes \prod_{c>j} h(z_c) \otimes 1 \right)\cdot  P(z_1,...,z_i \otimes z_{i+1},...,z_j \otimes z_{j+1},...,z_k)}{\prod^{a\leq i}_{i<b\leq j} \omega(z_b/z_a)\prod^{a\leq i}_{j<c} \omega(z_c/z_a)\prod^{i<b\leq j}_{j<c} \omega(z_c/z_b)}
$$
expanded in non-negative powers of $z_b/z_a$ and $z_c/z_b$. As before, the variables $z_a$ move between the tensor product signs such that $z_1,...,z_i$ sit in the first tensor factor, $z_{i+1},...,z_j$ sit in the second tensor factor, and $z_{j+1},...,z_k$ sit in the third tensor factor. \\

We still need to prove that $\Delta$ respects the multiplication in $\CA^\geq$. One of the relations one needs to check is that $\Delta$ respects relation \eqref{eqn:relhp} between shuffle elements in $\CA^+$ and Cartan elements in $\CA^0$. This is a straightforward exercise, and we leave it to the reader. The remaining relation is more interesting and non-trivial, namely the fact that:
$$
\Delta(P*Q) = \Delta(P)*\Delta(Q)
$$
for any shuffle elements $P \in \CA_k$ and $Q\in \CA_l$. We will proceed to prove this relation. By definition, the LHS equals:
$$
P*Q = \sum_{\{1,...,k+l\}=A\sqcup B} P(z_A)Q(z_B)\omega\left( \frac {z_A}{z_B} \right)
$$
where the sum goes over all partitions with $|A|=k$, $|B|=l$. Given a set of indices $A=\{a_1,...,a_k\}$, we use above the shorthand notation $P(z_A)=P(z_{a_1},...,z_{a_k})$ to unburden our notation. For any $i\in \{0,...,k+l\}$, we will denote:
$$
A_1 = A\cap \{1,...,i\}, \qquad A_2 = A \cap \{i+1,...,k+l\}
$$

$$
B_1 = B \cap \{1,...,i\}, \qquad B_2 = B \cap \{i+1,...,k+l\}
$$
Then the coproduct \eqref{eqn:coproduct} of $P*Q$ is given by:
$$
\Delta(P*Q) = \sum_{i=0}^{k+l} \sum_{\{i+1,...,k+l\} = A_2 \sqcup B_2}^{\{1,...,i\}=A_1 \sqcup B_1} \frac {h(z_{A_2}) h(z_{B_2}) P(z_{A_1} \otimes z_{A_2})Q(z_{B_1} \otimes z_{B_2}) \omega \left( \frac {z_{A_1 \sqcup A_2}}{z_{B_1 \sqcup B_2}} \right)}{\omega \left( \frac {z_{A_2 \sqcup B_2}}{z_{A_1 \sqcup B_1}} \right)}
$$
We can commute $h(z_{B_2})$ past $P(z_{A_1})$, and the price we pay is a ratio of $\omega$'s as in \eqref{eqn:relhp}. We conclude that $\Delta(P*Q)$ equals:
$$
\sum_{i=0}^{k+l} \sum_{\{i+1,...,k+l\} = A_2 \sqcup B_2}^{\{1,...,i\}=A_1 \sqcup B_1} \frac {h(z_{A_2}) P(z_{A_1} \otimes z_{A_2})}{\omega \left( \frac {z_{A_2}}{z_{A_1}} \right)}  \cdot  \frac {h(z_{B_2}) Q(z_{B_1} \otimes z_{B_2})}{\omega \left( \frac {z_{B_2}}{z_{B_1}} \right)}  \cdot \omega \left( \frac {z_{A_1}}{z_{B_1}} \right)\omega \left( \frac {z_{A_2}}{z_{B_2}} \right)
$$
The RHS is precisely $\Delta(P)*\Delta(Q)$, thus concluding the proof. 

\end{proof}

\begin{proof} \textbf{of Proposition \ref{prop:well}:} Let us first prove the symmetry of the pairing \eqref{eqn:normal}, i.e. that:
$$
\left(\sym \left[ z_1^{n_1}...z_k^{n_k} \prod_{1\leq i < j \leq k} \omega(z_i/z_j) \right], \sym \left[ z_1^{m_1}...z_k^{m_k} \prod_{1\leq i < j \leq k} \omega(z_i/z_j) \right] \right) 
$$
is symmetric in $m$'s and $n$'s. By the definition of the normal ordered integral, the above equals $\frac {c_1^k c_2^d} {\alpha_1^k}$ times the integral:
\begin{equation}
\label{eqn:11}
\int_{|u_1|\ll...\ll|u_k|} \sum_{\sigma \in S(k)} u_1^{n_1}...u_k^{n_k} u_{\sigma(1)}^{-m_1} ... u_{\sigma(k)}^{-m_k} \mathop{\prod_{i>j}}_{\sigma^{-1}(i)<\sigma^{-1}(j)} \frac {\omega(u_j/u_i)}{\omega(u_i/u_j)} Du_1...Du_k
\end{equation}
If we change variables to $v_i = u_{\sigma(i)}$, the above becomes:
$$
\sum_{\sigma \in S(k)} \int_{|v_{\sigma^{-1}(1)}|\ll...\ll|v_{\sigma^{-1}(k)}|} v_{\sigma^{-1}(1)}^{n_1}...v_{\sigma^{-1}(k)}^{n_k} v_{1}^{-m_1} ... v_{k}^{-m_k} \mathop{\prod_{i<j}}_{\sigma(i)>\sigma(j)} \frac {\omega(v_j/v_i)}{\omega(v_i/v_j)} Dv_1...Dv_k
$$
If we write $\tau = \sigma^{-1}$ and $w_i = v_i^{-1}$, the above becomes:
$$
\sum_{\tau \in S(k)} \int_{|w_{\tau(k)}|\ll...\ll|w_{\tau(1)}|} w_{1}^{m_1} ... w_{k}^{m_k} w_{\tau(1)}^{-n_1}...w_{\tau(k)}^{-n_k}  \mathop{\prod_{i>j}}_{\tau^{-1}(i)<\tau^{-1}(j)} \frac {\omega(w_j/w_i)}{\omega(w_i/w_j)} Dw_1...Dw_k
$$
If $i>j$ are such that $\tau^{-1}(i)>\tau^{-1}(j)$, then we can move the contours so as to change $|w_i|\ll|w_j|$ to $|w_i|\gg|w_j|$ (the reason is because we are not hindered by the poles of the fraction). Therefore, the above equals \eqref{eqn:11} with $m$'s and $n$'s switched, so we conclude that the pairing is symmetric. \\

The pairing is also easily seen to be non-degenerate in the second variable: if \eqref{eqn:normal} vanishes for all $n_1,...,n_k\in \BZ$, then we conclude that $P=0$. We will use this to show that the pairing is well-defined in the first variable. We need to prove that for any $P_1,...,P_l$ of the form \eqref{eqn:form}: 
$$
\sum_{i=1}^l c_iP_i = 0 \Longrightarrow \sum_{i=1}^l c_i (P_i,P)=0 \quad \forall P \in \CA^+
$$
By Proposition \ref{prop:surj}, we can write $P = \sum d_{j} P'_j$ for each $P'_j$ of the form $z^{m_1} * ... * z^{m_k}$. Symmetry implies that
$$
\sum_i \sum_j c_i d_j (P_i,P_j') = \sum_i \sum_j c_i d_j (P_j',P_i) = \sum_j d_j \left(P_j',\sum_i c_i P_i \right)=0 
$$
because the pairing, as defined in \eqref{eqn:normal} is certainly additive and well-defined in the second variable. We have therefore shown that $(\cdot,\cdot)$ is a well-defined symmetric pairing on $\CA^+$, and now we must prove that it satisfies the bialgebra property \eqref{eqn:bialg}. By Proposition \ref{prop:surj}, it is enough to show that:
$$
\left( \sym \left[z_1^{n_1}...z_k^{n_k}z_{k+1}^{m_1}...z_{k+k'}^{m_{k'}} \prod_{i<j} \omega(z_i/z_j) \right], P \right) =  
$$

$$
=\left( \sym \left[z_1^{n_1}...z_k^{n_k}\prod_{i<j} \omega(z_i/z_j) \right] \otimes \sym \left[z_1^{m_1}...z_k^{m_{k'}}\prod_{i<j} \omega(z_i/z_j) \right], \Delta(P) \right)
$$
This is immediate from \eqref{eqn:coproduct} and \eqref{eqn:normal}. To extend the pairing from $\CA^+$ to $\CA^\geq$, we need to show that it is compatible via the bialgebra pairing with relations \eqref{eqn:relhp}. This is a straightforward exercise, and we leave it to the interested reader. 

\end{proof}

\begin{proof} \textbf{of Proposition \ref{prop:ph}:} Let us write:
$$
F(k,d):= \frac {q_1^{\frac {-k^2+kd+d+2k}2}}{(1-q_2)^k} \prod_{i=1}^k \frac {q_1^{i-1}-q_2}{q_1^i-1}
$$
Then the LHS of \eqref{eqn:fry} equals:
$$
 \sym\left[ P(z_1,...,z_k)Q(z_{k+1},...,z_{k+l}) \prod_{1\leq i \neq j \leq k+l} \frac {z_i-q_1z_j}{z_i-z_j} \prod^{i \leq k}_{j>k+1} \frac {(z_i-z_j)(z_i-qz_j)}{(z_i-q_1z_j)(z_i-q_2z_j)} \right]_{z_i = q_1^{-i}} \cdot
$$ 

$$
 F(k+l,d+e) = \sym\left[ P(z_1,...,z_k)Q(z_{k+1},...,z_{k+l}) \prod_{1\leq i \neq j \leq k} \frac {z_i-q_1z_j}{z_i-z_j} \prod_{k+1\leq i \neq j \leq k+l} \frac {z_i-q_1z_j}{z_i-z_j} \right.
$$

$$
\left. \prod^{i \leq k}_{j>k+1} \frac {(z_i-qz_j)(z_j-q_1z_i)}{(z_i-q_2z_j)(z_j-z_i)} \right]_{z_i = q_1^{-i}} \cdot  F(k+l,d+e)
$$
The last factor in the numerator means that the only terms which do not vanish in the above $\sym$ are those which put the variables $\{z_1,...,z_k\}$ before the variables $\{z_{k+1},...,z_{k+l}\}$, leaving the above equal to:
$$
\sym\left[ P(z_1,...,z_k)\prod_{1\leq i \neq j \leq k} \frac {z_i-q_1z_j}{z_i-z_j} \right]_{z_i=q_1^{-i}} \sym\left[ Q(z_{k+1},...,z_{k+l})\prod_{k+1\leq i \neq j \leq k+l} \frac {z_i-q_1z_j}{z_i-z_j} \right]_{z_i=q_1^{-i}} 
$$

$$
 \prod^{1\leq i\leq k}_{k+1 \leq j \leq k+l} \frac {(q_1^{-i}-qq_1^{-j})(q_1^{-j}-q_1^{1-i})}{(q_1^{-i}-q_2q_1^{-j})(q_1^{-j}-q_1^{-i})}  \cdot F(k+l,d+e) = 
$$ 

$$
= \ph(P) \ph(Q) q_1^{-ek+kl} \prod^{1\leq i\leq k}_{k+1 \leq j \leq k+l} \frac {(q_1^{j-i-1}-q_2)(q_1^{j-i+1}-1)}{(q_1^{j-i}-q_2)(q_1^{j-i}-1)} \cdot \frac {F(k+l,d+e)}{F(k,d)F(l,e)}
$$ 
The products in the above expression cancel the $F$'s, leaving the desired RHS of \eqref{eqn:fry}. 

\end{proof}

\end{document}